\swapnumbers \numberwithin{equation}{section}
\theoremstyle{plain}
\newtheorem{thm}{Theorem}[section]
\newtheorem{lemma}[thm]{Lemma}
\newtheorem{conjec}[thm]{Conjecture}
\newtheorem{prop}[thm]{Proposition}
\newtheorem{cor}[thm]{Corollary}
\theoremstyle{definition}
\newtheorem{defin}[thm]{Definition}
\newtheorem{remark}[thm]{Remark}
\newtheorem{problem}[thm]{Problem}
 \newcommand{\Wi}{\widetilde}
\DeclareMathOperator{\trf}{{\rm trf}}
\def\pt{\protect\operatorname{pt}}
\def\Z{{\mathbb Z}}
\def\Q{{\mathbb Q}}
\def\1{\hbox{\rm\rlap {1}\hskip.03in{\rom I}}}
\def\Bbbone{{\rm1\mathchoice{\kern-0.25em}{\kern-0.25em}
{\kern-0.2em}{\kern-0.2em}I}}
\long\def\forget#1\forgotten{} %
\newcommand\ver[1]{\marginpar{\tiny Changed in Ver \VER}}
\date{\today}
\begin{document}

\title[macroscopic dimension]{On Gromov's positive scalar curvature conjecture for duality groups}

\author[A.~Dranishnikov]{Alexander  Dranishnikov} %

\address{Alexander N. Dranishnikov, Department of Mathematics, University
of Florida, 358 Little Hall, Gainesville, FL 32611-8105, USA}
\email{dranish@math.ufl.edu}
\thanks{Supported by NSF, grant DMS-1304627}
\subjclass[2000]
{Primary 55M30; 
Secondary 53C23,  
57N65  
}

\keywords{}

\begin{abstract}
We prove the inequality $$ \dim_{mc}\Wi M\le n-2$$ for the macroscopic dimension of the universal
covers $\Wi M$ of almost spin $n$-manifolds $M$ with positive scalar
curvature whose fundamental group $\pi_1(M)$ is a virtual duality group that satisfies the coarse
Baum-Connes conjecture.
\end{abstract}

\maketitle \tableofcontents

\section {Introduction}

In the book dedicated to Gelfand's 80th anniversary~\cite{Gr1} Gromov
introduced the notion of macroscopic dimension to formulate his observation that the universal cover
$\Wi M$ of a closed $n$-manifold with a positive
scalar curvature is dimensionally deficient on large scales:
\begin{conjec}[Gromov~\cite{Gr1}]
The macroscopic dimension of the
universal cover $\Wi M$ of a closed $n$-manifold with a positive
scalar curvature metric is at most $n-2$: $$\dim_{mc}\Wi{M^n}\le n-2.$$
\end{conjec}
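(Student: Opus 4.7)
The plan is to prove the conjecture under the hypotheses listed in the abstract: namely, that $\pi=\pi_1(M)$ is a virtual duality group satisfying the coarse Baum--Connes conjecture. The overall strategy is to produce a continuous, uniformly cobounded map from $\Wi M$ into a simplicial complex of dimension at most $n-2$; by definition this gives $\dim_{mc}\Wi M\le n-2$.

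First, I convert positive scalar curvature into a K-theoretic vanishing statement. Since $M$ is almost spin and closed, $\Wi M$ is spin and carries a complete metric whose scalar curvature is uniformly bounded below by a positive constant. The Lichnerowicz--Schr\"odinger formula then makes the Dirac operator on $\Wi M$ uniformly invertible, so its coarse (Roe) index in $K_n(C^*(\Wi M))$ vanishes. Because $\Wi M$ is $\pi$-equivariantly quasi-isometric to $\pi$, the coarse Baum--Connes hypothesis transfers to $\Wi M$, so the coarse assembly map from coarse $K$-homology to $K_*(C^*(\Wi M))$ is an isomorphism. Consequently the coarse $K$-homology fundamental class of $\Wi M$ is zero.

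Second, I use the duality-group hypothesis to convert this $K$-theoretic vanishing into a cell-level dimension reduction. After passing to a finite-index duality subgroup $\pi'\subset\pi$ and to the corresponding finite cover of $M$ (which preserves spin, psc and the macroscopic dimension of the universal cover), $B\pi'$ satisfies a Poincar\'e--Lefschetz--type duality with dualizing module $H^d(\pi';\Z[\pi'])$ where $d=\mathrm{cd}(\pi')$. A Chern-character plus duality argument then translates the vanishing of the coarse $K$-homology class into the vanishing of the primary obstruction to compressing the classifying map $f\colon M\to B\pi'$ into the $(n-2)$-skeleton. Obstruction theory then produces a map $M\to L^{(n-2)}$ into an $(n-2)$-dimensional complex, and lifting to universal covers gives the required uniformly cobounded map on $\Wi M$.

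The main obstacle is the second step. The $K$-homology vanishing is a statement about an index class, whereas what is needed for macroscopic dimension is a genuine cell-level obstruction to a topological map into a lower-dimensional complex. The bridge is supplied precisely by the duality-group hypothesis: duality replaces $K$-theory information by ordinary cohomology valued in the dualizing module, where the primary obstruction to factoring through a lower skeleton becomes explicit and checkable. Ensuring that the resulting map is simultaneously continuous, equivariant with respect to $\pi$, and uniformly cobounded on large scales, and that the passage to and from the finite-index duality subgroup does not spoil these properties, is the most delicate point of the argument.
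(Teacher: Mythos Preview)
Your first step is essentially the paper's: pass to a finite-index duality subgroup, use Roe's vanishing theorem for the coarse Dirac index, and invoke the coarse Baum--Connes isomorphism to conclude that $\Wi f_*([\Wi M]_K)=0$ in $K_*^{lf}(E\pi)$.

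The second step, however, has a genuine gap. You claim that a ``Chern-character plus duality argument'' kills the primary obstruction to compressing $f:M\to B\pi'$ into the $(n-2)$-skeleton \emph{downstairs}, and that obstruction theory then gives a map $M\to L^{(n-2)}$. This is not what the paper proves, and for good reason. The paper explicitly distinguishes its argument from the earlier work in~\cite{BD}: there the classifying map was deformed into $B\pi^{(n-2)}$, but here only a \emph{bounded} deformation of the lift $\Wi f:\Wi M\to E\pi$ into $E\pi^{(n-2)}$ is obtained. The relevant obstruction theory is the coarsely equivariant one developed in Sections~2--3, not classical obstruction theory on $M$; the obstruction to compressing $f$ itself need not vanish.

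More seriously, a Chern-character argument only yields rational information, whereas the passage from the $(n-1)$-skeleton to the $(n-2)$-skeleton is governed by a torsion phenomenon. The paper's argument is a two-stage deformation. Stage one (Theorem~\ref{main-1}) uses the duality property $H_i^{lf}(E\pi)=0$ for $i\ne cd(\pi)$ to force $ku^{lf}_n(E\pi)\to K^{lf}_n(E\pi)$ to be injective, hence $\Wi f_*([\Wi M])=0$ integrally and $\dim_{mc}\Wi M<n$. Stage two (Lemmas~\ref{n-1} and~\ref{n-2}) first arranges $g(\Wi M^{(n-1)})\subset E\pi^{(n-2)}$, then builds an auxiliary complex $L$ by reattaching $(n-1)$-cells of $E\pi$ along the Hopf-type map $\beta:S^{n-1}\to S^{n-2}$, and uses $ko$-theory together with $\pi_{n-1}(S^{n-2})=\Z_2$ (for $n\ge 5$) to show that the local degrees $d_\sigma$ are all even. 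This parity argument is the heart of the $(n-2)$-step and is invisible to any Chern-character calculation. Your sketch does not supply a substitute for it.
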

This conjecture was proven
by D. Bolotov for 3-manifolds~\cite{Bo}. Then it was proved  by
Bolotov and the author~\cite{BD} for spin $n$-manifolds, $n>3$ whose
fundamental group satisfies the Analytic Novikov Conjecture and the
Rosenberg-Stolz condition~\cite{RS}: {\em The natural transformation of the 
connective real K-theory of the group to periodic,
$ko_*(\pi)\to KO_*(\pi)$ is a monomorphism}. Perhaps the only known class of 
groups for which Rosenberg-Stolz condition is satisfied 
consists of the products of free groups.

In this paper we prove the conjecture for spin manifolds whose
fundamental group is the virtual duality group that satisfies the Analytic Novikov Conjecture.
Moreover, we prove it for almost spin manifolds, i.e. manifolds with spin universal cover,
with the virtual duality fundamental group that satisfies the coarse Baum-Connes conjecture.
It allows substantially enlarge the class of groups for which the Gromov conjecture holds true
by adding to the (virtual) products of free groups covered in~\cite{BD}, virtually nilpotent groups, arithmetic groups, knot groups, braid groups, 
mapping class groups, $Out(F_n)$, and their products.

A weak version of the Gromov conjecture states that $$\dim_{mc}\Wi{M^n}\le n-1$$ 
for positive scalar curvature manifolds $M$.
It first appeared in~\cite{Gr2} in a different language. 
Even the weak Gromov's conjecture is extremely difficult, since it implies the famous 
Gromov-Lawson conjecture~\cite{GL}\cite{R2}:
{\em A closed aspherical manifold cannot carry a metric with positive scalar curvature}.
In this paper we show that
the condition $\dim_{mc}\Wi{M^n}\le n-1$ is the integral version of the notion of  macroscopically small manifolds
introduced by Gong and Yu~\cite{GY}.

In~\cite{Dr2},~\cite{Dr3} we proved the weak Gromov conjecture for all rationally
inessential orientable closed $n$-manifold $M$ whose fundamental
group is a duality group.  Note that in the case of a spin manifold with
a positive scalar curvature metric the rational inessentiality
follows from Rosenberg's theorem and the KO-homology Chern-Dold
character. In this paper we extend this result proving  the weak Gromov conjecture
for almost spin closed $n$-manifold $M$, $n\ge 5$ whose fundamental
group satisfies virtually the following coarse version of  the
Rosenberg-Stoltz condition: {\em The natural transformation of the connective K-theory 
to periodic $ku_*^{lf}(E\pi)\to K_*^{lf}(E\pi)$ is a
monomorphism} where $E\pi$ is the universal cover of the classifying space $B\pi=K(\pi,1)$. 
We note that the duality groups satisfies this condition.

In the last section of the paper we reformulate the property $$\dim_{mc}\Wi{M^n}\le n-1$$ 
in terms of the Stone-\v Cech compactification
$\beta\Wi{M^n}$ of $\Wi{M^n}$. Then
we present the reduction of the weak Gromov conjecture to
the rationality conjecture for certain group homology classes and the reduction of the later conjecture to
a conjecture about the cohomology classes of $\beta\Wi{M^n}$.

\section{Gromov's macroscopic dimension}
Here is the definition of the macroscopic dimension.
\begin{defin}\cite{Gr1}
A metric space $X$ has the {\em macroscopic dimension} less or equal
to $k$, $\dim_{mc}X\le k$, if there is a continuous uniformly
cobounded map $f:X\to N^k$ to a $k$-dimensional simplicial complex.
\end{defin}
A map $f:X\to Y$ of a metric space is {\em uniformly cobounded} if
there is a constant $C>0$ such that $diam(f^{-1}(y))<C$ for all
$y\in Y$.

In~\cite{Dr1} this definition was modified to $\dim_{MC}X$ by asking $f$ to be Lipschitz
for a uniform metric on $N^k$. This modification was useful for attacking another conjecture of Gromov.
Note that the inequalities
$$
\dim_{mc}X\le\dim_{MC}X\le asdim X$$ 
follows from the definitions. Here $asdim$ is Gromov's asymptotic dimension~\cite{Gr3},\cite{BeD}.
As it follows from~\cite{Dr4} there is an example that makes both inequality strict. In particular, every essential
$n$-manifold with the fundamental group $\Z^m$ for $m>n$ is such.

\subsection{Inessential manifolds} Gromov calls an $n$-manifold $M$ {\em inessential} if a 
map $f:M\to B\pi$ that classifies its universal covering
$p:\Wi M\to M$ can be deformed to a map into the $(n-1)$-dimensional skeleton, $g:M\to B\pi^{(n-1)}$. 
Otherwise a manifold is called {\em essential}.
Clearly, for every inessential $n$-manifold
$M$ we obtain $\dim_{mc}\Wi M<n$. Indeed, a lift $\Wi g:\Wi M\to E\pi^{(n-1)}$ of $g$ defines a 
required map to $(n-1)$-dimensional complex.

The following characterization of inessential manifolds can be found in~\cite{Ba} (see also~\cite{BD}, Proposition 3.2).
\begin{thm}\label{bab} For a closed oriented $n$-manifold $M$ the following are equivalent:

(1) $M$ is inessential;

(2) $f_*([M])=0$ in $H_n(B\pi;\mathbb Z)$ where $[M]\in H_n(M;\mathbb Z)$ is the fundamental class and $f:M\to B\pi$ 
classifies the universal cover of $M$.
\end{thm}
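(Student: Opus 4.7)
\medskip
\noindent
\emph{Proof proposal.} My plan is to prove the two implications separately. Direction $(1) \Rightarrow (2)$ is immediate: if $f$ is homotopic to a map $g \colon M \to B\pi^{(n-1)}$, then $f_*([M])$ factors through $H_n(B\pi^{(n-1)})$, which vanishes because $B\pi^{(n-1)}$ has dimension $n-1$.

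For $(2) \Rightarrow (1)$, I would first apply cellular approximation to assume $f(M) \subset B\pi^{(n)}$ and then consider the cofibration
\[
B\pi^{(n-1)} \hookrightarrow B\pi^{(n)} \xrightarrow{\,q\,} B\pi^{(n)}/B\pi^{(n-1)} \;=\; \bigvee_{\alpha} S^n_\alpha,
\]
the wedge being indexed by the $n$-cells of $B\pi$. The Puppe sequence for cofibrations gives the exact sequence of pointed homotopy classes
\[
[M, B\pi^{(n-1)}] \to [M, B\pi^{(n)}] \xrightarrow{\,q_*\,} \bigl[M,\, \textstyle\bigvee_\alpha S^n_\alpha\bigr],
\]
so it suffices to show that $q \circ f$ is null-homotopic. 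Since $\bigvee_\alpha S^n_\alpha$ is $(n-1)$-connected and $M$ is an oriented closed $n$-manifold, obstruction theory together with Poincar\'e duality identifies
\[
\bigl[M,\, \textstyle\bigvee_\alpha S^n_\alpha\bigr] \;\cong\; H^n\!\bigl(M;\, H_n(\bigvee_\alpha S^n_\alpha)\bigr) \;\cong\; H_n\!\bigl(\bigvee_\alpha S^n_\alpha\bigr) \;=\; C_n^{\mathrm{cell}}(B\pi),
\]
with a map sent to the image of $[M]$. Thus $[q \circ f]$ is represented by the cellular $n$-cycle $z := f_*([M])$.

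By hypothesis $z$ is a cellular boundary, so $z = \partial_{n+1}\bigl(\sum_i n_i E_i\bigr)$ for some $(n+1)$-cells $E_i$ of $B\pi$. I would then modify $f$ cell by cell: each attaching map $\phi_i \colon S^n \to B\pi^{(n)}$ of $E_i$ represents $\partial E_i$ in cellular homology and is null-homotopic in $B\pi$ through the interior of $E_i$. Picking a small $n$-disk $D_i \subset M$ and replacing $f|_{D_i}$ by $\phi_i^{\pm 1}$ composed with the pinch $D_i \to D_i/\partial D_i \cong S^n$ alters $z$ by $\mp \partial E_i$ in $C_n^{\mathrm{cell}}(B\pi)$, while leaving the homotopy class of the full map $M \to B\pi$ unchanged. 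Iterating with multiplicities $|n_i|$ and appropriate signs produces $f' \simeq f$ with $(qf')_*([M]) = 0$; Puppe exactness then deforms $f'$ into $B\pi^{(n-1)}$.

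The main obstacle I anticipate is executing the bubble-modification step rigorously: one must verify that a local disk replacement changes the cellular image of $[M]$ by exactly $\pm \partial E_i$, and that the modified map is genuinely homotopic in $B\pi$ to $f$. Both points are standard in principle---cellular boundaries are represented by attaching maps, and $\phi_i$ bounds inside $E_i$---but the bookkeeping of signs, basepoints, and the fact that the correction disks are not dictated by any preferred CW structure on $M$ takes some care.
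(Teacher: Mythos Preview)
The paper does not prove this theorem; it quotes it from \cite{Ba} (see also \cite{BD}). Your argument is correct. Both anticipated difficulties dissolve once you notice that $B\pi$ is aspherical: since $\pi_n(B\pi)=0$ for $n\ge 2$, any two extensions of $f|_{M\setminus D}$ over the small disk $D$ are homotopic rel $\partial D$ in $B\pi$, so the bubble modification really does not change the homotopy class of $f:M\to B\pi$. And the effect of the bubble on $(qf)_*[M]$ is, by the very definition of the cellular boundary map, exactly $\pm\partial E_i$, since $q\circ\phi_i:S^n\to\bigvee_\alpha S^n_\alpha$ records the degrees of the attaching map on the $n$-cells.

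That said, the route taken in \cite{Ba}, \cite{BD}, and in the paper's own proof of the coarse analogue (Theorem~\ref{small}) is different and shorter. One works directly with the primary obstruction $o_f\in H^n(M;L)$, $L=\pi_{n-1}(B\pi^{(n-1)})$, to deforming $f$ into $B\pi^{(n-1)}$; this is the only obstruction since $\dim M=n$. Naturality gives $o_f=f^*(o_1)$, so
\[
f_*\bigl([M]\cap o_f\bigr)=f_*[M]\cap o_1=0.
\]
Because $f$ induces an isomorphism on $\pi_1$, the map $f_*:H_0(M;L)\to H_0(B\pi;L)$ is an isomorphism, hence $[M]\cap o_f=0$, and Poincar\'e duality forces $o_f=0$. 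This avoids the Puppe sequence and the bubble surgery entirely, handles twisted coefficients uniformly, and is precisely the template the paper reuses in the coarsely equivariant setting. Your approach, by contrast, is more elementary in that it sidesteps local coefficients and makes the modification of $f$ geometrically explicit; the price is the bookkeeping you already flagged.
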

In the light of this theorem an oriented manifold is called {\em rationally inessential} if $f_*([M])=0$ in $H_n(B\pi;\Q)$~\cite{Gr2}.

We note that the proof of the inequality $\dim_{mc}\Wi M\le n-2$ for the cases covered in~\cite{BD} 
consists of constructing a deformation of the classifying map
$f:M\to B\pi$ to the $(n-2)$-dimensional skeleton $B\pi^{(n-2)}$. In this paper we construct a bounded 
deformation of the lift $\Wi f:\Wi M\to E\pi$ to
$E\pi^{(n-2)}$. The proof in~\cite{BD} was performed in the frame of the classical obstruction theory with twisted coefficients. 
The proof in this paper is based on the obstruction theory that suits the problem. The formulation of the corresponding 
obstruction theory is performed in
the rest of this section and the next section. It is culminated by the following theorem analogous to Theorem~\ref{bab}:
\begin{thm}\label{small}
For a closed oriented $n$-manifold $M$  the following are equivalent:

1. $\dim_{mc}\Wi M<n$;

2. $\Wi f_*([\Wi M])=0$ in $H^{lf}_n(E\pi;\mathbb Z)$ where $[\Wi M]\in H^{lf}_n(\Wi M;\mathbb Z)$ is the fundamental class of $\Wi M$
and $\Wi f:\Wi M\to E\pi$ is a lift of the map $f:M\to B\pi$ 
classifying the universal cover of $M$. 
\end{thm}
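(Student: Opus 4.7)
The plan is to prove the equivalence by an obstruction-theoretic identification: $\Wi f_*([\Wi M])\in H^{lf}_n(E\pi;\Z)$ is the primary (and, for dimension reasons, only relevant) obstruction to $\pi$-equivariantly and boundedly deforming the proper map $\Wi f:\Wi M\to E\pi$ into the $(n-1)$-skeleton $E\pi^{(n-1)}$.

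First I would prove a bridge lemma: $\dim_{mc}\Wi M<n$ if and only if there exists a $\pi$-equivariant homotopy $H:\Wi M\times I\to E\pi$ with $H_0=\Wi f$, $H_1(\Wi M)\subset E\pi^{(n-1)}$ and uniformly bounded tracks. The $(\Leftarrow)$ direction is immediate: $H_1$ is a continuous map to an $(n-1)$-dimensional complex, and uniform coboundedness of $H_1$ follows from the bounded tracks together with $\pi$-equivariance and cocompactness of the action of $\pi$ on $\Wi M$. For $(\Rightarrow)$, starting from a uniformly cobounded $g:\Wi M\to N^{n-1}$ witnessing $\dim_{mc}\Wi M\le n-1$, I would construct a $\pi$-equivariant cobounded surrogate $\bar g:\Wi M\to E\pi^{(n-1)}$ by spreading $g$ over the $\pi$-translates of a fundamental domain and extending equivariantly; because $\Wi f$ and the inclusion of $\bar g$ into $E\pi$ are two $\pi$-equivariant proper maps into the contractible space $E\pi$, they descend to homotopic classifying maps $M\to B\pi$ for the universal cover, and the lifted $\pi$-equivariant homotopy has bounded tracks by cocompactness.

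Next I would carry out the equivariant cellular obstruction theory for pushing the (cellular) map $\Wi f$ off the interiors of the top-dimensional cells of $E\pi$. Because $\Wi M$ has dimension $n$, the only relevant obstruction sits in degree $n$ and is realized by an equivariant cellular $n$-cocycle on $\Wi M$ with twisted coefficients determined by $E\pi$. Applying locally finite Poincar\'e duality on the oriented $n$-manifold $\Wi M$, this cocycle is dual to the class $\Wi f_*([\Wi M])\in H^{lf}_n(E\pi;\Z)$. Hence the obstruction vanishes precisely when $\Wi f_*([\Wi M])=0$, and combining this with the bridge lemma gives $(1)\Leftrightarrow(2)$.

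The main obstacle I expect is the identification step — verifying that the top-dimensional equivariant obstruction really matches $\Wi f_*([\Wi M])$ under locally finite Poincar\'e duality, which is the locally finite counterpart of Theorem~\ref{bab}. The framework needed (equivariant/bounded obstruction theory for proper maps, together with locally finite Poincar\'e duality on $\Wi M$) is exactly what the remainder of this section and the next section are devoted to; at this stage the theorem packages the output into a clean manifold-level criterion, while the machinery developed afterwards supplies the verification.
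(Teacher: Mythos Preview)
Your bridge lemma is false in the $(\Rightarrow)$ direction, and this is the crux of the matter. A $\pi$-equivariant homotopy $H:\Wi M\times I\to E\pi$ descends to a homotopy $M\times I\to B\pi$ deforming $f$ into $B\pi^{(n-1)}$; by Theorem~\ref{bab} this is exactly inessentiality, i.e.\ $f_*([M])=0$ in $H_n(B\pi;\Z)$. So what your bridge lemma actually characterizes is inessentiality, which is strictly stronger than $\dim_{mc}\Wi M<n$. The paper makes this explicit in \S2.3: the equivariant obstruction class in $H^n_\pi(\Wi X;L)=H^n(X;L)$ is $o_f$, and ``vanishing of $o_f$ would imply a bounded extension but it is not a necessary condition.'' Your construction of an equivariant surrogate $\bar g$ by ``spreading $g$ over the $\pi$-translates of a fundamental domain and extending equivariantly'' cannot succeed in general: the extension across the boundaries of translates is precisely the obstruction $o_f$, and there is no reason for it to vanish just because some non-equivariant cobounded map to an $(n-1)$-complex exists.

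The paper's remedy is to introduce a genuinely intermediate theory, the \emph{coarsely equivariant} (co)homology $H^*_{ce}$, sitting between equivariant and ordinary. The correct version of Proposition~\ref{deform} asks only for a \emph{bounded} (not $\pi$-equivariant) homotopy; the obstruction $o_{\Wi f}$ to such a bounded deformation lives in $H^n_{ce}(\Wi M;L)$ (Proposition~\ref{obstructiontheory}, Theorem~\ref{obstr-dim}). Poincar\'e duality is then set up for this coarsely equivariant theory (Proposition~\ref{PD}), and the implication $2\Rightarrow 1$ is obtained not by identifying $o_{\Wi f}$ with $\Wi f_*([\Wi M])$ directly, but via the cap-product projection formula $\Wi f_*([\Wi M]\cap o_{\Wi f})=\Wi f_*([\Wi M])\cap o_1$ in $H_0^{lf,ce}$, together with the fact that $\Wi f_*$ is an isomorphism in degree $0$. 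Your outline has the right shape (bridge lemma, obstruction, duality), but each step must be run in the coarsely equivariant setting rather than the $\pi$-equivariant one.
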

Thus the condition $\dim_{mc}\Wi M<n$ can be considered as  a 'coarse inessentiality' of $M$ with its characterization
given by Theorem~\ref{small}  analogous to the characterization of the inessentiality in Theorem~\ref{bab}.

\subsection{Macroscopic dimension and coarse inessentiality.}

We call a cellular map $f:X\to Y$ {\em bounded} if there is $k\in\mathbb N$ such that for every cell $e\subset X$ the image
$f(e)$ intersects at most $k$ cells.

For a finitely presented group $\pi$ we can choose an Eilenberg-McLane complex $K(\pi,1)=B\pi$ to be 
locally finite and hence metrizable.
We consider a proper geodesic metric on $B\pi$ and lift it to the universal cover $E\pi$. We recall  that
every closed ball with respect to a proper metric is compact.
In particular, every ball in $E\pi$ is contained in a finite subcomplex. 
The metric space $E\pi$ is {\em uniformly contractible}, i.e., there is a function $\rho:\mathbb R_+\to\mathbb R_+$ 
such that every set of diameter
$\le r$ can be contracted to a point in a ball of diameter $\rho(r)$.

For a CW complex $X$ we consider the product CW complex structure on $X\times I$. Clearly, a cellular 
homotopy $H:X\times I\to E\pi$ is bounded if and only if if there is a uniform upper bound on the diameter
of paths $H(x\times I)$, $x\in X$.

The following proposition shows that the inequality $\dim_{mc}\Wi M<n$ for the universal cover of a 
$n$-manifold can be considered as a coarse version
of inessentiality.
\begin{prop}\label{deform} 
For a finite complex $M$ the following conditions are equivalent:

(1) $\dim_{mc}\Wi M<n$.

(2) A lift $\Wi f:\Wi M\to E\pi$ to the universal coverings of a map $f:M\to B\pi$ that induces an 
isomorphism of the fundamental groups
can be deformed to the $(n-1)$-dimensional skeleton $E\pi^{(n-1)}$ of $E\pi$ by a bounded homotopy.
\end{prop}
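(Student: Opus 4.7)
The direction (2)$\Rightarrow$(1) is essentially immediate. The lift $\Wi f:\Wi M\to E\pi$ is $\pi$-equivariant between cocompact proper $\pi$-spaces and is therefore uniformly cobounded. A bounded homotopy enlarges fibers by at most the uniform bound on its tracks, so the deformation $g:\Wi M\to E\pi^{(n-1)}$ is again continuous and uniformly cobounded, and since $E\pi^{(n-1)}$ is an $(n-1)$-dimensional complex this witnesses $\dim_{mc}\Wi M\le n-1<n$.

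For (1)$\Rightarrow$(2) the plan is to fix a continuous uniformly cobounded map $h:\Wi M\to N^k$ with $k\le n-1$ supplied by the hypothesis, and to build a cellular map $\phi:N^k\to E\pi$ (whose image then lies automatically in $E\pi^{(k)}\subseteq E\pi^{(n-1)}$) such that $\phi\circ h$ is within uniformly bounded distance of $\Wi f$. Given such a $\phi$, the uniform contractibility of $E\pi$ will upgrade the pointwise bounded distance to a bounded cellular homotopy $\Wi f\simeq\phi\circ h:\Wi M\to E\pi^{(n-1)}$, which is exactly the conclusion of~(2).

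The construction of $\phi$ proceeds skeleton by skeleton, after arranging by a subdivision that every simplex $\sigma$ of $N^k$ has preimage $h^{-1}(\sigma)$ of uniformly bounded diameter. On each vertex $v$ with $h^{-1}(v)\ne\empt$ choose $x_v\in h^{-1}(v)$ and set $\phi(v)=\Wi f(x_v)$; on the remaining vertices extend $\phi$ by sending $v$ to $\Wi f$ of an arbitrary point of $h^{-1}(\St v)$ when nonempty, or to any nearby value otherwise. Inductively, for a simplex $\sigma$ whose boundary has already been mapped, the coarse-Lipschitz behaviour of the $\pi$-equivariant map $\Wi f$ together with the uniform diameter bound on $h^{-1}(\sigma)$ forces $\phi(\partial\sigma)$ to lie in a ball of uniformly controlled diameter, and the uniform contractibility of $E\pi$ allows $\phi$ to be extended across $\sigma$ inside a slightly larger ball. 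Cellular approximation keeps the extension inside $E\pi^{(\dim\sigma)}\subseteq E\pi^{(n-1)}$.

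For the bounded comparison, pick any $x\in h^{-1}(\sigma)$ and let $v_0$ be a vertex of $\sigma$. Then $\Wi f(x)$ is close to $\Wi f(x_{v_0})=\phi(v_0)$ since $d_{\Wi M}(x,x_{v_0})$ is at most the diameter of $h^{-1}(\sigma)$ and $\Wi f$ is coarsely Lipschitz, while $\phi(h(x))\in\phi(\sigma)$ lies in a bounded neighbourhood of $\phi(v_0)$ by the previous step. Hence $d_{E\pi}(\Wi f(x),\phi(h(x)))$ is bounded independently of $x$, and one more application of uniform contractibility yields a uniformly bounded cellular homotopy from $\Wi f$ to $\phi\circ h$. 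The step on which the whole argument hinges is the initial uniform bound on the diameter of $h^{-1}(\sigma)$ after subdivision; upgrading the fiber-wise coboundedness given by the definition to this simplex-wise coboundedness (via local finiteness of $N^k$, continuity of $h$, and properness of the metric on $\Wi M$) is what makes the uniform constants propagate through the skeletal induction defining $\phi$, and is where I expect the main technical effort to go.
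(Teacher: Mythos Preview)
The paper does not prove this proposition in the text; it simply defers to \cite{Dr1}, Proposition~3.1. So there is no in-paper argument to compare against directly. Your outline is the natural strategy, and your direction (2)$\Rightarrow$(1) is correct.

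For (1)$\Rightarrow$(2), the step you isolate at the end is a genuine difficulty, and the tools you propose do not resolve it. Uniform coboundedness of $h$ controls only point-fibres $h^{-1}(y)$; it says nothing about preimages of simplices, and subdividing $N$ cannot change this. Consider $\Wi M=\R$ and $h:\R\to\R$ given by $h(x)=\sign(x)\log(1+|x|)$: this is a homeomorphism, so every fibre is a single point and $h$ is trivially uniformly cobounded, yet $h^{-1}([n,n+1])$ has diameter of order $e^{n}$. Local finiteness of $N$, continuity of $h$, and properness of the metric on $\Wi M$ all hold in this example, so none of them supplies the missing uniform bound, and no subdivision of the target helps.

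The effect on your construction is fatal as written: for a simplex $\sigma=[v_0,\dots,v_m]$ of $N$ there is no uniform reason the chosen basepoints $x_{v_i}\in\Wi M$ lie close to one another, so the vertex values $\phi(v_i)=\Wi f(x_{v_i})$ need not lie in any ball of controlled radius in $E\pi$, and the skeleton-wise filling via uniform contractibility is not uniformly bounded. Consequently $\phi\circ h$ need not be within bounded distance of $\Wi f$. What is actually needed is a passage from the arbitrary witness $h$ to one for which preimages of simplices (or of open stars) are uniformly bounded---for instance by producing a uniformly bounded open cover of $\Wi M$ of multiplicity at most $n$ and replacing $h$ by the associated nerve map. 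That replacement step (or an equivalent device) is the substantive content supplied by \cite{Dr1}, and it is not covered by the ingredients you list.
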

The proof of Proposition~\ref{deform} can be extracted from~\cite{Dr1}, Proposition 3.1.

\subsection{Primary obstruction to a bounded extension}
Here we recall some basic facts of the elementary obstruction
theory. Let $f:X\to Y$ be a cellular map that induces an isomorphism
of the fundamental groups. We want to deform the map $f$ to a map to
the $(n-1)$-skeleton $Y^{(n-1)}$. For that we consider the extension
problem $$X\supset X^{(n-1)} \stackrel{f}{\to} Y^{(n-1)},$$ i.e.,
the problem to extend $f:X^{(n-1)}\to Y^{(n-1)}$ continuously to a
map $\bar f:X\to Y^{(n-1)}$. The primary obstruction for this
problem $o_f$ is the obstruction to extend $f$ to the $n$-skeleton.
It lies in the cohomology group $H^n(X;L)$ where
$L=\pi_{n-1}(Y^{(n-1)})$ is the $(n-1)$-dimensional homotopy group
considered as a $\pi$-module for $\pi=\pi_1(Y)=\pi_1(X)$. The class $o_f$ is generated by the cocycle
$C_f:C_n(X)\to L$ defined by the obvious rule: $$C_f(e)=[f\circ\phi_e:S^{n-1}\to Y^{(n-1)}]\in\pi_{n-1}(Y^{(n-1)})$$
where $\phi_e:\partial D^n\to X^{(n-1)}$ is the attaching map of an $n$-cell $e$.
Clearly, $f$ can be extended if and only if $C_f=0$.

The obstruction theory says that an extension $g:X\to Y^{(n-1)}$ that agrees
with $f$ on the $(n-2)$-skeleton $X^{(n-2)}$ exits if and only if
$o_f=0$. The primary obstruction is natural: If $g:Z\to X$ is a
cellular map, then $o_{gf}=g^*(o_f)$. In particular, in our case
$o_f=f^*(o_1)$ where $o_1\in H^n(Y;L)$ is the primary obstruction to
the retraction of $Y$ to the $(n-1)$-skeleton.

\

Now in the above setting we consider the extension problem for the universal coverings
$$\Wi X\supset \Wi X^{(n-1)} \stackrel{\Wi f}{\to} \Wi Y^{(n-1)}.$$
We are looking for an extension $g:\Wi X\to \Wi Y^{(n-1)}$ which is on a bounded distance from $\Wi f: \Wi X\to \Wi Y$.
We call such extension {\em bounded}. In view of Proposition~\ref{deform} existence of such extension in the case when $Y=B\pi$ means exactly the inequality 
$\dim_{mc}\Wi X<n$. Clearly, the vanishing of the classical obstruction $o_{\Wi f}$ would not guarantee a bounded extension. We need to restrict the freedom in the choice
of cochains.
Note that the obstruction cochain $C_{\Wi f}$ is $\pi$-equivariant and its class in the equivariant cohomology
$H^n_{\pi}(\Wi X;L)=H^n(X;L)$ is exactly the class $o_f$. Of course  vanishing of $o_f$ would imply a bounded extension 
but it is not  a necessary condition. It turns out that the coarsely equivariant cochains do the required job.
\begin{defin} The group of coarsely equivariant cochains
$C_{ce}^n(\Wi X,L)$  consists of
homomorphisms $\phi: C_n(\Wi X)\to L$ such that the set 
$$\{\gamma^{-1}\phi(\gamma e)\mid \gamma\in\pi\}\subset L$$ 
is contained in a finitely generated subgroup of $L$ for every $n$-cell $e$.
\end{defin}
\begin{prop}\label{obstructiontheory}
Let $\Wi f:\Wi X\to\Wi Y$ be a lift to the universal coverings of a  cellular map
$f:X\to Y$ of a finite  complex to a locally finite that induces an
isomorphism of the fundamental groups.  Then 

(i) $\Wi f:\Wi X^{(n-1)}\to Y^{(n-1)}$ has bounded extension to $X^{(n)}$ if and only if $C_{\Wi f}=0$.

(ii) $\Wi f:\Wi X^{(n-2)}\to Y^{(n-1)}$ has bounded extension to $X^{(n)}$ if and only if $C_{\Wi f}=\delta\Psi$
where $\Psi$ is a coarsely equivariant cochain.
\end{prop}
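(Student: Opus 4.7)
The plan is to execute the classical primary obstruction argument on the universal covers, using $\pi$-equivariance of $\Wi f$ and the finiteness of $X$ to convert homotopical vanishing into uniformly bounded constructions.

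For (i), necessity is automatic: any continuous extension $g:\Wi X^{(n)}\to\Wi Y^{(n-1)}$ of $\Wi f|_{\Wi X^{(n-1)}}$ restricts on each $n$-cell $e$ to a null-homotopy of $\Wi f\circ\phi_e$ in $\Wi Y^{(n-1)}$, forcing $C_{\Wi f}(e)=0$. For sufficiency, finiteness of $X$ gives finitely many $\pi$-orbits of $n$-cells; on one representative $e_j$ from each orbit, the hypothesis $C_{\Wi f}(e_j)=0$ lets me pick a continuous extension $g_j:e_j\to\Wi Y^{(n-1)}$ of $\Wi f|_{\partial e_j}$. The rule $g(\gamma x)=\gamma g_j(x)$ for $x\in e_j$ defines a $\pi$-equivariant extension $g$ because $\Wi f$ is $\pi$-equivariant. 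Each compact image $g_j(e_j)$ meets $\Wi f(e_j)$ along $\Wi f(\partial e_j)$, so $d_{\Wi Y}(g_j(x),\Wi f(x))$ is bounded on $e_j$; $\pi$-equivariance then propagates the bound uniformly to all of $\Wi X^{(n)}$.

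For (ii), necessity uses the difference cochain. Given a bounded extension $g$ of $\Wi f|_{\Wi X^{(n-2)}}$, set $\Psi(c)=[g|_c\cup_{\partial c}\Wi f|_c]\in L$ for each $(n-1)$-cell $c$; a standard cocycle calculation gives $\delta\Psi=C_{\Wi f}-C_g=C_{\Wi f}$. To check coarse equivariance, observe that the sphere representing $\Psi(\gamma c)$ has image within a fixed bounded distance $M$ of $\Wi f(\gamma c)$, where $M=\sup_{\Wi X^{(n)}}d_{\Wi Y}(g,\Wi f)$. Applying the isometry $\gamma^{-1}$, the sphere representing $\gamma^{-1}\Psi(\gamma c)$ has image within $M$ of $\Wi f(c)$, a bounded neighborhood which is contained in a fixed finite subcomplex $K_c\subset\Wi Y^{(n-1)}$ independent of $\gamma$ (using that balls in $\Wi Y$ lie in finite subcomplexes). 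Hence $\{\gamma^{-1}\Psi(\gamma c)\}_\gamma$ sits in the finitely generated image of $\pi_{n-1}(K_c)\to L$.

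For sufficiency in (ii), assume $C_{\Wi f}=\delta\Psi$ with $\Psi$ coarsely equivariant. I would modify $\Wi f$ on each $(n-1)$-cell $c$, rel $\partial c$, by grafting in a representative sphere of $\Psi(c)$, producing $g':\Wi X^{(n-1)}\to\Wi Y^{(n-1)}$ with $C_{g'}=C_{\Wi f}-\delta\Psi=0$; part (i) then extends $g'$ over $n$-cells within bounded distance of $g'$, hence of $\Wi f$. The hardest step is keeping the grafting uniformly bounded. For each of the finitely many orbit representatives $c$ of $(n-1)$-cells, coarse equivariance provides a finitely generated $A_c\subset L$ containing every $\gamma^{-1}\Psi(\gamma c)$; I would pick spheres in $\Wi Y^{(n-1)}$ representing a finite generating set of $A_c$ and let $K_c$ be a finite subcomplex containing their images, so that every element of $A_c$ admits a representative sphere inside $K_c$. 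Then $\Psi(\gamma c)$ is representable inside $\gamma K_c$, a set of diameter $\mathrm{diam}(K_c)$ located near $\Wi f(\gamma c)$, so the modification displaces $\Wi f$ by at most $\mathrm{diam}(K_c)$, uniformly in $\gamma$. This algebraic-to-geometric translation—converting membership in a finitely generated subgroup into uniformly bounded representative spheres via a finite subcomplex—is what singles out the coarsely equivariant cochains as the correct cochain complex for the bounded extension problem.
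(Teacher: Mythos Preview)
Your argument for (i) and for the necessity direction of (ii) is essentially the same as the paper's, and is correct.

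There is, however, a real gap in your sufficiency argument for (ii). After grafting you obtain a map $g':\Wi X^{(n-1)}\to\Wi Y^{(n-1)}$ with $C_{g'}=0$, and you then invoke part (i) to extend $g'$ over the $n$-cells in a bounded way. But part (i) is stated (and proved) for the lift $\Wi f$ of a cellular map $f:X\to Y$, which is $\pi$-\emph{equivariant}; the boundedness of the extension comes precisely from building it on one representative per $\pi$-orbit and translating. Your $g'$ is only \emph{coarsely} equivariant, so this mechanism is not available. The condition $C_{g'}=0$ says each $g'|_{\partial\sigma}$ is null-homotopic in $\Wi Y^{(n-1)}$, but there is no a priori reason the null-homotopies over the infinitely many translates $\gamma\sigma$ have uniformly bounded diameter.

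The paper fills exactly this gap. Using the coarse equivariance of $g'$, for each $n$-cell $\sigma$ the images $\gamma^{-1}g'(\partial(\gamma\sigma))$ all lie in a fixed finite subcomplex $F\subset\Wi Y^{(n-1)}$. Since $\pi_{n-1}(F)$ is finitely generated as a $\Z\pi_1(F)$-module, one can find a larger finite subcomplex $W\supset F$ with $\ker\bigl(\pi_{n-1}(F)\to\pi_{n-1}(W)\bigr)=\ker\bigl(\pi_{n-1}(F)\to\pi_{n-1}(\Wi Y^{(n-1)})\bigr)$; hence every null-homotopy needed can be realized inside $W$, and the extensions over all $\gamma\sigma$ can be taken in $\gamma W$, giving the uniform bound. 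You should insert this argument (or an equivalent one) in place of the appeal to part (i).
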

\begin{proof}
The proof goes along the lines of a similar statement from the
classical obstruction theory. 

(i) Let $\pi=\pi_1(X)$. If $C_{\Wi f}=0$ then $\Wi f$ has a $\pi$-equivariant extension which is, clearly, bounded.
In the other direction the statement is trivial.

(ii) Let $C_{\Wi f}=\delta\Psi$ where
$\Psi:C_{n-1}(\Wi X)\to L$, $L=\pi_{n-1}(\Wi Y^{(n-1)})$, is a coarsely
equivariant homomorphism. For each $(n-1)$-cell $e$ of $X$ we fix a
lift $\Wi e\subset\Wi X^{(n-1)}$. Since $\Psi$ is coarsely equivariant, the
set $$\{\gamma^{-1}\Psi(\gamma\Wi e)\mid
\gamma\in\pi\}$$ spans a
finitely generated subgroup $G\subset L$. Therefore, $G$ is contained in the image of $\pi_{n-1}(F_{\Wi e})$ for
some finite subcomplex $F_{\Wi e}\subset\Wi Y^{(n-1)}$.

Like in the classical obstruction theory we define a map
$g_{\gamma}:\gamma\Wi e\to \gamma F_{\Wi e}\subset Y^{(n-1)}$,
$\gamma\in\pi$ on cells $\gamma\Wi e$ such that $g_{\gamma}$ agrees with
$\Wi f$ outside a small $(n-1)$-ball $B_{\gamma}\subset\gamma\Wi e$ and
the difference of the restriction of $\Wi f$ and $g_{\gamma}$  to $B_{\gamma}$
defines a map $$d_{\Wi f,g_{\gamma}}:S^{n-1}=B_{\gamma}^+\cup B_{\gamma}^-\to
\gamma F_{\Wi e}\subset Y^{(n-1)}$$ that represents the class
$-\Psi(\gamma\Wi e)\in \pi_{n-1}(\Wi Y^{(n-1)})$. 
The union $\cup_{\gamma\in\pi}g_{\gamma}$ defines a bounded map
$g:\Wi X^{(n-1)}\to\Wi Y^{(n-1)}$.  

The elementary obstruction theory implies that for every $n$-cell
$\sigma\subset\Wi X^{(n)}$ there is an extension $\bar
g_{\sigma}:\overline{\sigma}\to\Wi Y^{(n-1)}$ of
$g|_{\partial\sigma}$. Since $\partial\sigma$ is contained in finitely many $(n-1)$-cells, there are $e_1,\dots, e_k\subset X^{(n-1)}$
and $\gamma_1,\dots,\gamma_k\in\pi$ such that $$g(\partial\sigma)\subset \bigcup_{i=1}^k\gamma_iF_{\Wi e_i}=F.$$
Since $F$ is a finite complex, the group $\pi_{n-1}(F)$ is finitely generated over $\Z\pi_1(F)$. Hence
there is a finite subcomplex $W\subset \Wi Y^{(n-1)}$ containing $F$ such that for the homotopy groups $\pi_{n-1}$ we have
$ker (j_W)_*=ker j_*$ where $j_W:F\to W$ and $j:F\to Y^{(n-1)}$ are the inclusions.
Therefore, we may assume that $\bar g_{\sigma}(\sigma)\subset W$. Since 
$g(\partial\gamma\sigma)\subset\gamma Z$, we may assume that
$\bar g_{\gamma\sigma}(\gamma\sigma)\subset \gamma W$ for all translates of $\sigma$, $\gamma\in\pi$.
Thus, the resulting extension $\bar
g:\Wi X^{(n)}\to\Wi Y^{(n-1)}$ is bounded.

In the other direction, if there is a $k$-bounded map $\bar
g:\Wi X\to Y^{(n-1)}$ that coincides with $\Wi f$ on the
$(n-2)$-dimensional skeleton, then the difference cochain $d_{\Wi
f,\bar g}$ is coarsely equivariant. Indeed, the $k$-bounded maps
preserving a base point $\phi:S^{n-1}\to Y^{(n-1)}$
define a finitely generated subgroup
of $\pi_{n-1}(Y)$.  Then the formula $\delta d_{\Wi f,\bar g}=
C_{\bar g}-C_{\Wi f}$ and the fact that $o_{\bar g}=0$ imply that
$o_{\Wi f}=0$.
\end{proof}

\section{Coarsely equivariant cohomology}

Let $X$ be a CW complex and let $E_n(X)$ denote the set of its
$n$-dimensional cells. We recall that (co)homology groups of a CW complex
$X$ with coefficients in a $\pi$-module $L$ where $\pi=\pi_1(X)$
are defined by by means of the chain complex $Hom_{\pi}(C_n(\Wi X),L)$ where $\Wi X$ is the
universal cover of $X$ with the cellular structure induced from $X$.
The chain complex defining the homology groups $H_*(X;L)$ is $\{C_n(\Wi
X)\otimes_{\pi}L\}$. The resulting groups $H_*(X;L)$ and $H^*(X;L)$ do not depends on the
CW structure on $X$.

These groups can be interpreted as the equivariant (co)homology:
$$H_*(X;L)=H_*^{lf,\pi}(\Wi X;L)\ \ \ and\ \ \ H^*(X;L)=H_{\pi}^*(\Wi X;L).$$
The last equality is obvious since the equivariant cohomology groups
$H_{\pi}^*(\Wi X;L)$ are defined by equivariant cochains
$C_{\pi}^n(\Wi X),L\}$, i.e., homomorphisms $\phi:C_n(\Wi X)\to L$ such that
the set $$S_{\phi,c}=\{\gamma^{-1}\phi(\gamma c)\mid \gamma\in\pi\}$$ consists of one element  for
every $c\in C_n(\Wi X)$. Here we consider the group of coarsely equivariant cochains $C_{ce}^n(\Wi X,L)$.
Thus we have a chain of inclusions of cochain complexes
$$
C^*_{\pi}(\Wi X,L)\subset  C^*_{ce}(\Wi X,L)\subset C^*(\Wi X,L)
$$
which induces a chain of homomorphism of corresponding cohomology groups
$$
H^n(X;L)\stackrel{ec^*_X}\longrightarrow H^n_{ce}(\Wi X;L)\to H^n(\Wi X;L)
$$
where the first homomorphism is called the {\em equivariant coarsening homomorphism}.
The cohomology
groups $H^*_{ce}(\Wi X;L)$ are called the {\em coarsely equivariant
cohomology} of $\Wi X$ with coefficients in a $\pi$-module $L$.

\begin{remark}
In~\cite{Dr1} we defined the almost equivariant cohomology groups by means of almost equivariant cochains
$C_{ae}^n(\Wi X,L)$, i.e., homomorphisms $\phi:C_n(\Wi X)\to L$ for which the set
$S_{\phi,c}$ is finite for every chain $c\in C_n(\Wi X)$.
The inclusions 
$
C^*_{\pi}(\Wi X,L)\subset C^*_{ae}(\Wi X,L)\subset C^*_{ce}(\Wi X,L)
$
define the homomorphisms of corresponding cohomology groups
$$
H^n(X;L)\stackrel{pert^*_{X}}\longrightarrow H^n_{ae}(\Wi X;L)\stackrel{\alpha}\longrightarrow H^n_{ce}(\Wi X;L)
$$
where $ec^*_X=\alpha\circ pert^*_X$.
\end{remark}

Note that the groups $H^*_{ce}(\Wi X;L)$ do not depend on
the choice of a CW complex structure on $X$.
Also note that in the case when $L=\Z$ is a trivial module the cohomology
theory $H^*_{ce}(\Wi X;\Z)$ equals the standard (cellular) cohomology of
the universal cover $H^*(\Wi X;\Z)$.

A  proper cellular map $f:X\to Y$ that induces an
isomorphism of the fundamental groups lifts to a proper cellular map
of the universal covering spaces $\bar f:\Wi X\to\Wi Y$. The lifting
$\bar f$ defines a chain homomorphism $\bar f_*:C_n(\Wi X)\to
C_n(\Wi Y)$ and the homomorphism of coarsely equivariant cochains $\bar f^*:Hom_{ce}(C_n(\Wi
Y),L)\to Hom_{ce}(C_n(\Wi X),L)$. The latter defines a homomorphisms
of the coarsely equivariant cohomology groups
$$\bar f^*_{ce}:H^*_{ce}(\Wi Y;L)\to H^*_{ce}(\Wi X;L).$$

Suppose that $\pi$ acts freely on CW complexes $\Wi X$ and $\Wi Y$
such that the actions preserve the CW complex structures. We call a
cellular map $g:\Wi X\to \Wi Y$ {\em coarsely equivariant} if the set
$$\bigcup_{\gamma\in\pi}\{\gamma^{-1}g(\gamma e)\}$$
is contained in a finite subcomplex $K\subset \Wi Y$ for every cell $e$ in $\Wi
X$. 

\begin{prop}\label{induced1}
Let $f:X\to Y$ be a proper coarsely equivariant cellular map. Then the
induced homomorphism on cochains takes the coarsely equivariant
cochains to coarsely equivariant.
\end{prop}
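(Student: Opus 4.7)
The plan is to chase the definitions: verify that for every $n$-cell $e$ of $X$, the set $\{\gamma^{-1}(f^*\phi)(\gamma e) \mid \gamma \in \pi\}$ lies inside a finitely generated subgroup of $L$, using the two per-cell finiteness conditions supplied by coarse equivariance of $f$ and of $\phi$ separately, and then combining them in a single finite sum.

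First I would fix an $n$-cell $e$ of $X$ and invoke the coarse equivariance of $f$ to produce a finite subcomplex $K \subset Y$ with $\gamma^{-1} f(\gamma e) \subset K$ for every $\gamma \in \pi$. This pins down the support of the cellular chain $f_*(\gamma e)$: it lies in the finite set of $\gamma$-translates $\{\gamma\tau \mid \tau \in K^{(n)}\}$, so I can write
$$ f_*(\gamma e) \;=\; \sum_{\tau \in K^{(n)}} n_\tau^{(\gamma)}\,\gamma\tau \;=\; \gamma \cdot \Bigl(\sum_{\tau \in K^{(n)}} n_\tau^{(\gamma)}\,\tau\Bigr) $$
with integer coefficients $n_\tau^{(\gamma)}$ indexed by the finitely many $n$-cells of $K$, but possibly depending on $\gamma$.

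Next I would bring in the cochain: for each $\tau \in K^{(n)}$, coarse equivariance of $\phi$ furnishes a finitely generated subgroup $G_\tau \subset L$ containing $\gamma^{-1}\phi(\gamma\tau)$ for all $\gamma$. Since $\phi$ is an abelian-group homomorphism and the $\pi$-action on $L$ is by group automorphisms,
$$ \gamma^{-1}(f^*\phi)(\gamma e) \;=\; \gamma^{-1}\phi\bigl(f_*(\gamma e)\bigr) \;=\; \sum_{\tau \in K^{(n)}} n_\tau^{(\gamma)}\,\bigl(\gamma^{-1}\phi(\gamma\tau)\bigr). $$
Each summand stays inside $G_\tau$, so the total sum lies in the finitely generated subgroup $G := \sum_{\tau \in K^{(n)}} G_\tau$ of $L$, independently of $\gamma$. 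That is exactly the coarsely equivariant condition for $f^*\phi$ at the cell $e$.

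The main point to watch is that the integer degrees $n_\tau^{(\gamma)}$ can be unbounded in $\gamma$: coarse equivariance of $f$ only controls the support of $f_*(\gamma e)$ modulo the $\pi$-action, not the multiplicities with which $f|_{\gamma e}$ wraps around cells of $\gamma K$. The argument works anyway precisely because the $G_\tau$ are honest \emph{subgroups} and therefore absorb arbitrary integer scalars — this is the feature of the coarsely equivariant cochain condition (unlike, say, an almost equivariant condition that would only provide finite sets) that matches the coarsely equivariant map condition cleanly.
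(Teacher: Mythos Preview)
Your argument is correct and is exactly the definitional chase one would expect. The paper itself omits the proof of this proposition, deferring to the analogous statement (Proposition~2.3) in~\cite{Dr1} for almost equivariant cochains; your write-up is precisely the adaptation of that argument to the coarsely equivariant setting, and your closing remark pinpoints the one place where the ``finitely generated subgroup'' condition (as opposed to the ``finite set'' condition of almost equivariance) is actually used.
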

We omit the proof since it is similar to the proof of Proposition 2.3 in~\cite{Dr1}.
Proposition~\ref{induced1} and the standard facts about cellular
chain complexes imply the following.
\begin{prop}\label{induced2}
Let $X$ and $Y$ be  complexes with free cellular actions of a group
$\pi$.

(A) Then every coarsely equivariant cellular map $f:X\to Y$ induces an
homomorphism of the coarsely equivariant cohomology groups
$$
f^*:H^*_{ce}(Y;L)\to H^*_{ce}(X;L).$$

(B) If two coarsely equivariant maps $f_1, f_2: X\to Y$ are homotopic
by means of a cellular coarsely equivariant homotopy
$H:X\times[0,1]\to Y$, then they induce the same homomorphism of the
coarsely equivariant cohomology groups, $f^*_1=f^*_2$.
\end{prop}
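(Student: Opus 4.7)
The strategy for both parts is to lift the standard cellular-chain arguments to the subcomplex of coarsely equivariant cochains, using Proposition~\ref{induced1} as the main algebraic input. A coarsely equivariant cellular map gives a chain map whose dual sends $C^{*}_{ce}(Y;L)$ into $C^{*}_{ce}(X;L)$; a coarsely equivariant homotopy should give a chain homotopy whose dual does the same. Once these stability facts are established, parts (A) and (B) follow by elementary homological algebra.

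For (A), I would observe that $f_{*}\colon C_{*}(X)\to C_{*}(Y)$ is a chain map in the ordinary cellular sense, so its dual $f^{*}(\phi)(c)=\phi(f_{*}c)$ satisfies $\delta\circ f^{*}=f^{*}\circ\delta$ at the level of all cochains. Proposition~\ref{induced1} says that $f^{*}$ restricts to a map $C^{n}_{ce}(Y;L)\to C^{n}_{ce}(X;L)$, so we obtain a cochain map on the coarsely equivariant subcomplexes, which induces the required homomorphism on $H^{*}_{ce}$.

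For (B), I would imitate the classical prism-operator argument. The product CW structure on $X\times I$ carries the standard prism chain homotopy between the inclusions at $0$ and $1$; composing with the chain map induced by $H$ yields $P\colon C_{n}(X)\to C_{n+1}(Y)$ with $\partial P+P\partial=(f_{2})_{*}-(f_{1})_{*}$. The key point is that $P$ inherits a coarsely equivariant property from $H$: for every $n$-cell $e$ of $X$ the set $\bigcup_{\gamma\in\pi}\gamma^{-1}H(\gamma e\times I)$ lies in a finite subcomplex $K_{e}\subset Y$, so the chains $\gamma^{-1}P(\gamma e)$ take only finitely many values, all supported on the cells of $K_{e}$. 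Writing each such value as $\sum_{i}n_{i}c_{i}$ with $c_{i}$ ranging over the finite cell set of $K_{e}$, for $\phi\in C^{n+1}_{ce}(Y;L)$ one obtains $\gamma^{-1}\phi(P(\gamma e))=\sum_{i}n_{i}\,\gamma^{-1}\phi(\gamma c_{i})$, and the coarse equivariance of $\phi$ confines each set $\{\gamma^{-1}\phi(\gamma c_{i})\mid\gamma\in\pi\}$ to a finitely generated subgroup of $L$. Since only finitely many cells $c_{i}$ arise, $P^{*}\phi$ is itself coarsely equivariant. Dualising the prism identity then gives $\delta P^{*}+P^{*}\delta=f_{2}^{*}-f_{1}^{*}$ on $C^{*}_{ce}$, and so $f_{1}^{*}=f_{2}^{*}$ on $H^{*}_{ce}$.

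The main obstacle is the verification that $P$ inherits the correct finite-translate-values property from $H$, which amounts to careful bookkeeping of the $\pi$-action on the product CW structure $X\times I$ together with tracking how each prism cell is pushed forward by $H$; once this is handled, the remainder of the argument is a routine translation of the classical chain-homotopy argument to the coarsely equivariant subcomplex.
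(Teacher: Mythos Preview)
Your proposal is correct and matches the paper's approach: the paper does not give a proof but simply states that the proposition follows from Proposition~\ref{induced1} together with ``standard facts about cellular chain complexes,'' and your argument is precisely a fleshing-out of those standard facts (the chain-map and prism-operator arguments) carried over to the coarsely equivariant subcomplex. One small remark: the claim that the chains $\gamma^{-1}P(\gamma e)$ ``take only finitely many values'' is not actually needed (and not obviously true, since the integer coefficients $n_i$ may depend on $\gamma$); what your computation really uses---and what suffices---is that each $\gamma^{-1}P(\gamma e)$ is supported on the fixed finite cell set of $K_e$, so that $\gamma^{-1}\phi(P(\gamma e))=\sum_i n_i^{\gamma}\,\gamma^{-1}\phi(\gamma c_i)$ lies in the finitely generated subgroup $\sum_i G_i$ regardless of the integers $n_i^{\gamma}$.
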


\subsection{Coarsely equivariant homology}
We recall that the equivariant locally
finite homology groups are defined by the complex of infinite {\em
locally finite invariant chains}
$$ C_n^{\pi}(\Wi X;L)=\{\sum_{e\in E_n(\Wi X)} \lambda_ee\mid
\lambda_{ge}=g\lambda_e,\ \lambda_e\in L\}.$$ The local finiteness
condition on a chain requires that for every $x\in\Wi X$ there is a
neighborhood such that the number of $n$-cells $e$ intersecting $U$
for which $\lambda_e\ne 0$ is finite. This condition is satisfied
automatically when $X$ is a locally finite complex. It is known 
that $H_*(X;L)=H_*^{lf,\pi}(\Wi X;L)$.

Similarly one can define the coarsely equivariant homology groups on a
locally finite CW complex by considering infinite  coarsely equivariant
chains.  Let $X$ be a complex with the fundamental group $\pi$ and
the universal cover $\Wi X$. We call an infinite chain $\sum_{e\in
E_n(\Wi X)} \lambda_ee$ {\em coarsely equivariant} if the group generated by the set
$\{\gamma^{-1}\lambda_{\gamma e}\mid\gamma\in\pi\}\subset L$ is
finitely generated for every cell $e$. As we already have mentioned, the complex
of equivariant locally finite chains defines equivariant locally
finite homology $H^{lf,\pi}_*(\Wi X;L)$. The homology defined by the
coarsely equivariant locally finite chain are called  {\em the coarsely
equivariant locally finite homology}. We use notation
$H^{lf,ce}_*(\Wi X;L)$.  Like in the case of cohomology
this definition can be carried out for the singular homology and it
gives the same groups. In particular the groups $H^{lf,ce}_*(\Wi
X;L)$ do not depend on the choice of a CW complex structure on $X$.
As in the case of cohomology for any complex $K$ there is an equivariant coarsening homomorphism
$$ ec_*^K:H_*(K;L)=H^{lf,\pi}_*(\Wi K;L)\to H_*^{lf,ce}(\Wi K;L).$$
Also, there is an analog of Proposition~\ref{induced2} for the
coarsely equivariant locally finite homology:
\begin{prop}\label{h-induced}
Let $X$ and $Y$ be  complexes with free cellular actions of a group
$\pi$.

(A) Then every coarsely equivariant cellular map $f:X\to Y$ induces
a homomorphism of the coarsely equivariant homology groups
$$
f_*:H_*^{lf,ce}(X;L)\to H_*^{lf,ce}(Y;L).$$

(B) If two coarsely equivariant maps $f_1, f_2: X\to Y$ are
homotopic by means of a cellular coarsely equivariant homotopy, then
they induce the same homomorphism of the coarsely equivariant
cohomology groups, $(f_1)_*=(f_2)_*$.
\end{prop}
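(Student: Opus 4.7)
The plan is to mirror the cohomological proof of Proposition~\ref{induced2}, working at the chain level on locally finite coarsely equivariant chains. For part (A), given $f:X\to Y$ coarsely equivariant and cellular, I would define $f_\#$ by the standard cellular formula $f_\#(\sum_e\lambda_e e)=\sum_\sigma\mu_\sigma\sigma$, where $\mu_\sigma$ is the sum of the products of cellular degrees and coefficients $\lambda_e$ over cells $e$ whose image contains $\sigma$. The chain map property and passage to homology are then automatic; the crux is to show that $f_\#$ restricts to a map $C_n^{lf,ce}(X;L)\to C_n^{lf,ce}(Y;L)$.

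First, pick orbit representatives $e_1,\dots,e_k$ for the $n$-cells of $X/\pi$ and, using the coarse equivariance of $f$, finite subcomplexes $K_i\subset Y$ containing $\bigcup_\gamma\gamma^{-1}f(\gamma e_i)$. For any cell $\sigma\subset Y$ and any $i$, the cells $\gamma'e_i$ with $\sigma$ appearing in $f_\#(\gamma' e_i)$ satisfy $(\gamma')^{-1}\sigma\in K_i$; freeness of the $\pi$-action and finiteness of $K_i$ force the set of such $\gamma'$ to be finite. Summing over the finitely many orbit indices $i$, the coefficient $\mu_\sigma$ is a finite sum in $L$, and $f_\#(c)$ is well-defined and locally finite.

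For coarse equivariance, fix $\sigma$ and parametrize the cells contributing to $\mu_{\gamma\sigma}$ as $\gamma\zeta^{-1}e_i$ with $\zeta\in E_i(\sigma):=\{\zeta\in\pi:\zeta\sigma\in K_i\}$, a finite set independent of $\gamma$. Then
$$\gamma^{-1}\mu_{\gamma\sigma}=\sum_{i=1}^k\sum_{\zeta\in E_i(\sigma)}d_{i,\zeta}^{(\gamma)}\cdot\bigl(\gamma^{-1}\lambda_{\gamma\zeta^{-1}e_i}\bigr)$$
for integer cellular degrees $d_{i,\zeta}^{(\gamma)}\in\Z$. By the coarse equivariance of $c$, for each fixed pair $(i,\zeta)$ the elements $\gamma^{-1}\lambda_{\gamma\zeta^{-1}e_i}$ (as $\gamma$ varies) lie in a finitely generated subgroup $G_{i,\zeta}\subset L$; since $G_{i,\zeta}$ is closed under integer multiplication, $\gamma^{-1}\mu_{\gamma\sigma}$ lies in the finitely generated subgroup $\sum_{i,\zeta}G_{i,\zeta}$ for every $\gamma$. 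This proves $f_\#(c)\in C_n^{lf,ce}(Y;L)$.

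For part (B), a cellular coarsely equivariant homotopy $H:X\times I\to Y$ (with the product CW structure on $X\times I$) defines a chain-level homotopy by $D_\#(e)=H_\#(e\times I)$. The same bookkeeping as in (A) shows that $D_\#$ sends $C_n^{lf,ce}(X;L)$ into $C_{n+1}^{lf,ce}(Y;L)$, and the standard identity $\partial D_\#+D_\#\partial=(f_1)_\#-(f_0)_\#$ follows from $\partial(e\times I)=(\partial e)\times I+(-1)^{\dim e}e\times\partial I$. Hence $(f_0)_*=(f_1)_*$ on $H_*^{lf,ce}$. The main obstacle is the bookkeeping in the coarse-equivariance step: although the integer degrees $d_{i,\zeta}^{(\gamma)}$ may vary unboundedly with $\gamma$, the coefficients $\gamma^{-1}\mu_{\gamma\sigma}$ must be confined to a fixed finitely generated subgroup of $L$, which rests precisely on closure of finitely generated abelian groups under integer multiplication.
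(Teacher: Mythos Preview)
Your proposal is correct and follows exactly the route the paper intends: the paper gives no proof of Proposition~\ref{h-induced} at all, merely declaring it the homological analog of Proposition~\ref{induced2} (whose proof in turn is deferred to Proposition~\ref{induced1} and \cite{Dr1}). Your chain-level argument is precisely the spelled-out version of that analogy, and the key verification---that the pushforward of a coarsely equivariant locally finite chain remains coarsely equivariant, using finiteness of the index sets $E_i(\sigma)$ and closure of finitely generated subgroups under integer multiples---is the correct content.

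One small caveat: your choice of finitely many orbit representatives $e_1,\dots,e_k$ implicitly assumes $X/\pi$ has finitely many $n$-cells, which is not literally stated in the proposition but holds in every application in the paper (where $X=\Wi M$ for $M$ finite). The argument goes through verbatim orbit-by-orbit when $X/\pi$ is merely locally finite, provided $f$ is proper; you may want to note this hypothesis explicitly.
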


Let $M$ be an oriented $n$-dimensional PL manifold with a fixed
triangulation. Denote by $M^*$ the dual complex. There is a
bijection between $k$-simplices $e$ and the dual $(n-k)$-cells $e^*$
which defines the Poincar\' e duality isomorphism. This bijection
extends to a similar bijection on the universal cover $\Wi M$. Let
$\pi=\pi_1(M)$. For any $\pi$-module $L$ the Poincar\' e duality on $M$
with coefficients in $L$ is given by the cochain-chain level by
isomorphisms
$$
Hom_{\pi}(C_k(\Wi M^*),L) \stackrel{PD_k}\longrightarrow
C_{n-k}^{lf,\pi}(\Wi M;L)$$ where $PD_k$ takes a cochain
$\phi:C_k(\Wi M^*)\to L$ to the following chain $\sum_{e\in
E_{n-k}(\Wi M)}\phi(e^*)e$. The family $PD_*$ is a chain isomorphism
which is also known as the cap product
$$
PD_k(\phi)=\phi\cap[\Wi M]$$ with the fundamental class $[\Wi M]\in
C_n^{lf,\pi}(\Wi M)$, where $[\Wi M]=\sum_{e\in E_n(\Wi M)} e$. We
note that the homomorphisms $PD_k$ and $PD_k^{-1}$ extend to the
coarsely equivariant chains and cochains:
$$
Hom_{ce}(C_k(\Wi M^*),L) \stackrel{PD_k}\longrightarrow
C_{n-k}^{lf,ce}(\Wi M;L).$$ Thus, the homomorphisms $PD_*$ define
the Poincar\' e duality isomorphisms $PD_{ce}$ between the coarsely
equivariant cohomology and homology. We summarize this in the
following
\begin{prop}\label{PD}
For any closed oriented $n$-manifold $M$ and any $\pi_1(M)$-module
$L$ the Poincar\' e duality forms the following commutative diagram:
$$
\begin{CD}
H^k(M;L) @ >ec^*_M>> H^k_{ce}(\Wi M;L)\\
@V{-\cap [M]}VV @V{-\cap[\Wi M]}VV\\
H_{n-k}(M;L) @>ec_*^M>> H_{n-k}^{lf,ce}(\Wi M;L).\\
\end{CD}
$$
\end{prop}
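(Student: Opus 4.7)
The plan is to work at the cochain-chain level and show that the $PD_k$ isomorphism defined in the paragraphs preceding the proposition is compatible with both the equivariant and coarsely equivariant filtrations of the cochain/chain complexes. Once we know this, the diagram in the statement is just the passage to (co)homology of a strictly commutative square of chain complexes.

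Concretely, let me look at the square of cochain/chain complexes
\[
\begin{CD}
Hom_{\pi}(C_k(\Wi M^*),L) @>>> Hom_{ce}(C_k(\Wi M^*),L)\\
@V{PD_k}VV @V{PD_k}VV\\
C_{n-k}^{lf,\pi}(\Wi M;L) @>>> C_{n-k}^{lf,ce}(\Wi M;L),
\end{CD}
\]
where the horizontal maps are the inclusions and the vertical maps are both given by the same formula $PD_k(\phi)=\sum_{e\in E_{n-k}(\Wi M)}\phi(e^*)e$. Commutativity of this square on the nose is tautological; the only real content is that the right-hand $PD_k$ is well defined, i.e., that coarsely equivariant cochains are sent to coarsely equivariant locally finite chains. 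But if $\phi\in Hom_{ce}(C_k(\Wi M^*),L)$, then for a fixed cell $e\in E_{n-k}(\Wi M)$ the set
\[
\{\gamma^{-1}\lambda_{\gamma e}\mid\gamma\in\pi\}=\{\gamma^{-1}\phi((\gamma e)^*)\mid\gamma\in\pi\}=\{\gamma^{-1}\phi(\gamma e^*)\mid\gamma\in\pi\}
\]
lies in a finitely generated subgroup of $L$, because duality between $\Wi M$ and $\Wi M^*$ intertwines the two $\pi$-actions, so $(\gamma e)^*=\gamma e^*$. Local finiteness of the resulting chain is automatic from local finiteness of $\Wi M$. The same calculation restricted to strict equivariance reproduces the classical $PD_k$ for $M$.

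Passing to (co)homology of this square and using the identifications $H^k(M;L)=H^k_\pi(\Wi M;L)$ and $H_{n-k}(M;L)=H^{lf,\pi}_{n-k}(\Wi M;L)$, the horizontal arrows become the equivariant coarsening homomorphisms $ec^*_M$ and $ec_*^M$ by definition. The left-hand vertical arrow induces the standard Poincar\'e duality on $M$, which via the chain-level formula $PD_k(\phi)=\phi\cap[\Wi M]$ is precisely cap product with $[M]\in H_n(M;\Z)$ after taking $\pi$-quotients; the right-hand vertical arrow induces cap product with the $\pi$-equivariant fundamental class $[\Wi M]\in C^{lf,\pi}_n(\Wi M)\subset C^{lf,ce}_n(\Wi M)$, interpreted now as a class in $H^{lf,ce}_n(\Wi M;\Z)$. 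This gives the diagram in the proposition.

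The only place that requires care is the verification that $PD_k$ respects the coarsely equivariant structure on both sides, and this is the one-line observation displayed above; the rest is formal bookkeeping. I would also remark, for completeness, that since $PD_k$ is a chain isomorphism compatible with both filtrations it in fact induces Poincar\'e duality isomorphisms $PD_{ce}:H^k_{ce}(\Wi M;L)\to H^{lf,ce}_{n-k}(\Wi M;L)$, which is the assertion already alluded to in the paragraph preceding the statement.
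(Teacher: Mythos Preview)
Your proof is correct and follows essentially the same approach as the paper: the proposition is stated as a summary of the chain--cochain level construction given in the preceding paragraphs, where the key point is exactly that the explicit formula $PD_k(\phi)=\sum_e\phi(e^*)e$ carries coarsely equivariant cochains to coarsely equivariant chains. You have simply made explicit the one-line verification (via $(\gamma e)^*=\gamma e^*$) that the paper leaves as the remark ``We note that the homomorphisms $PD_k$ and $PD_k^{-1}$ extend to the coarsely equivariant chains and cochains.''
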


We note that the operation of the cap product for equivariant
homology cohomology automatically extends on the chain-cochain level
to the cap product on the coarsely equivariant homology and
cohomology. Then the Poincar\' e Duality isomorphism $PD_{ce}$ for $\Wi
M$ can be described as the cap product with the homology fundamental class
$[\Wi M]\in H^{lf}_n(\Wi M)$.

\subsection{Obstruction to the inequality $\dim_{mc}\Wi M^n<n$}
In view of Proposition~\ref{deform} we can reformulate Proposition~\ref{obstructiontheory} as follows:

\begin{thm}\label{obstr-dim}
Let $X$ be a finite $n$-complex with $\pi_1(X)=\pi$ and let $f:X\to
B\pi$ be a  map that induces an isomorphism of the fundamental
groups. Then $\dim_{mc}\Wi X<n$ if and only if the  obstruction $o_{\Wi f}\in H^n_{ce}(\Wi X;\pi_{n-1}(Y^{(n-1)}))$
is trivial.
\end{thm}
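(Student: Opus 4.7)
The plan is to combine Proposition~\ref{deform} with Proposition~\ref{obstructiontheory}(ii). By Proposition~\ref{deform}, the inequality $\dim_{mc}\Wi X<n$ is equivalent to $\Wi f\colon\Wi X\to E\pi$ admitting a bounded cellular deformation to a map into $E\pi^{(n-1)}$. Since $X$ is a finite $n$-complex one has $\Wi X=\Wi X^{(n)}$, and it suffices to match this condition with the vanishing of $o_{\Wi f}=[C_{\Wi f}]\in H^n_{ce}(\Wi X;L)$, where $L=\pi_{n-1}(E\pi^{(n-1)})$.

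For the ``if'' direction, I would assume $o_{\Wi f}=0$, so that $C_{\Wi f}=\delta\Psi$ for some coarsely equivariant cochain $\Psi$. Proposition~\ref{obstructiontheory}(ii) then produces a bounded extension $g\colon\Wi X\to E\pi^{(n-1)}$ of $\Wi f|_{\Wi X^{(n-2)}}$. Since $g$ is at bounded distance from $\Wi f$ and their images on each cell type have uniformly bounded diameter (by $\pi$-equivariance and the finiteness of $X$), I would invoke the uniform contractibility function $\rho$ of $E\pi$ to fill in a bounded cellular homotopy from $\Wi f$ to $g$ cell by cell, starting from the constant homotopy on $\Wi X^{(n-2)}$. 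Proposition~\ref{deform} then yields $\dim_{mc}\Wi X<n$.

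For the ``only if'' direction, I would start from a bounded cellular homotopy $H$ between $\Wi f$ and some cellular map $g'\colon\Wi X\to E\pi^{(n-1)}$, and form the standard difference cochain $d=d_{\Wi f,g'}\colon C_{n-1}(\Wi X)\to L$ built from $\Wi f|_e$, $g'|_e$, and $H|_{\partial e\times I}$ on each $(n-1)$-cell $e$. The coboundary formula gives $\delta d=C_{g'}-C_{\Wi f}$, and $C_{g'}=0$ because $g'(\Wi X)\subset E\pi^{(n-1)}$, so $C_{\Wi f}=-\delta d$. To conclude $o_{\Wi f}=0$ in $H^n_{ce}$ I must verify that $d$ is coarsely equivariant: the sphere representing $d(e)$ lies in a subset of $E\pi^{(n-1)}$ whose diameter is controlled by the uniform bound on $H$ together with the finitely many cell diameters in $\Wi f(\Wi X)$ and $g'(\Wi X)$, hence sits inside a finite subcomplex $F_e\subset E\pi^{(n-1)}$; on the translate $\gamma e$ the corresponding data lies in $\gamma F_e$, so the set $\{\gamma^{-1}d(\gamma e)\mid\gamma\in\pi\}$ is contained in the finitely generated image of $\pi_{n-1}(F_e)$ (after a fixed enlargement absorbing basepoint changes).

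The main obstacle is this coarse equivariance check for the difference cochain, which is precisely where the boundedness of the homotopy $H$ enters; the remainder is a direct combination of Propositions~\ref{deform} and~\ref{obstructiontheory} with the uniform contractibility of $E\pi$.
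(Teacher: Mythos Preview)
Your proof is correct and essentially matches the paper in the ``if'' direction (Proposition~\ref{obstructiontheory}(ii) plus uniform contractibility of $E\pi$ plus Proposition~\ref{deform}).

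In the ``only if'' direction your route differs from the paper's. The paper does not build a difference cochain by hand; instead it observes that the bounded cellular homotopy $H$ is coarsely equivariant (this is Proposition~\ref{uniform=almost}), and then uses naturality $o_{\Wi f}=\Wi f^*(o_1)$ together with the homotopy invariance of coarsely equivariant cohomology (Proposition~\ref{induced2}) to write $o_{\Wi f}=g^*i^*(o_1)$, where $i:E\pi^{(n-1)}\hookrightarrow E\pi$; since $i^*(o_1)$ lives in $H^n_{ce}$ of an $(n-1)$-dimensional complex, it vanishes. Your argument instead unpacks this homotopy invariance at the cochain level: you restrict $H$ to $\Wi X^{(n-2)}\times I$ (which, by cellularity, lands in $E\pi^{(n-1)}$), form the difference cochain $d$ from $\Wi f|_e$, $g'|_e$, and $H|_{\partial e\times I}$, and verify directly from the uniform bound on $H$ that $d$ is coarsely equivariant. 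What you gain is a self-contained argument that does not invoke Propositions~\ref{uniform=almost} and~\ref{induced2}; what the paper gains is brevity and a cleaner separation of concerns, since the coarse equivariance of $H$ and the homotopy invariance of $H^*_{ce}$ are reusable facts established once. The two arguments are equivalent in content: your coarse-equivariance check for $d$ is exactly the cochain-level reason Proposition~\ref{induced2}(B) holds in this case.
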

\begin{proof}
If $\dim_{mc}\Wi X<n$, then by Proposition~\ref{deform} there is a
a bounded cellular homotopy of $\Wi f:\Wi X\to E\pi$ to a
map $g:\Wi X\to E\pi^{(n-1)}$. By Corollary~\ref{uniform=almost},
the map $g$ is coarsely equivariant. Then by
Proposition~\ref{induced2}, $o_{\Wi f}=\Wi f^*(o_1)=g^*i^*(o_1)=0$
where $i:E\pi^{(n-1)}\subset E\pi$ is the inclusion and $o_1$ is the primary obstruction to bounded retraction of $E\pi$ onto $E\pi^{(n-1)}$.

We assume that $B\pi$ is a locally finite simplicial complex. If
$o_{\Wi f}=0$, then by Proposition~\ref{obstructiontheory} there is
a bounded cellular map $g:\Wi X\to E\pi^{(n-1)}$ which agrees with
$\Wi f$ on the $(n-2)$-skeleton $X^{(n-2)}$. The uniform contractibility of $E\pi$ implies that $g$ is homotopic to
$\Wi f$ by means of a bounded cellular homotopy.  Then Proposition~\ref{deform} implies the inequality $\dim_{mc}\Wi X<n$.
\end{proof}

\begin{prop}\label{uniform=almost} Let $X$ and $Y$ be universal covers of
finite and locally finite complexes respectively with the same fundamental group $\pi$.  Then a bounded cellular
homotopy $\Phi:X\times[0,1]\to Y$ of a coarsely
equivariant map is coarsely equivariant.
\end{prop}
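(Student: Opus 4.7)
The plan is to bootstrap from the coarse equivariance of $\Phi(-,0)$, using the bounded-diameter hypothesis together with the standing conventions that $\pi$ acts on $Y$ by isometries and that $Y$ carries a proper geodesic metric. First I would fix a uniform bound $D>0$ on the path diameters $diam\bigl(\Phi(x\times[0,1])\bigr)$, and write $f_0:=\Phi(-,0)$. For each cell $e\subset X$ the coarse equivariance of $f_0$ yields a finite subcomplex $K_e\subset Y$ with $f_0(\gamma e)\subset\gamma K_e$ for every $\gamma\in\pi$.

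Next I would verify the coarse equivariance condition cell by cell in the product CW structure on $X\times[0,1]$, whose cells are of the three types $e\times\{0\}$, $e\times\{1\}$, and $e\times[0,1]$. On $e\times\{0\}$ it is the hypothesis. On a cell $e\times[0,1]$, every point of $\Phi(\gamma e\times[0,1])$ lies within distance $D$ of some point of $f_0(\gamma e)\subset\gamma K_e$, and since $\gamma$ acts as an isometry of $Y$,
$$
\Phi\bigl(\gamma(e\times[0,1])\bigr)\subset N_D(\gamma K_e)=\gamma N_D(K_e),
$$
so $\gamma^{-1}\Phi(\gamma(e\times[0,1]))\subset N_D(K_e)$, uniformly in $\gamma\in\pi$. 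Restricting the same estimate to $t=1$ handles the cells $e\times\{1\}$.

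Finally I would use properness of the metric on $Y$ -- which holds because $Y$ is the universal cover of a locally finite complex with a proper geodesic metric -- to upgrade the bounded inclusion into a containment in a finite subcomplex: the closed neighborhood $\overline{N_D(K_e)}$ is compact, hence meets only finitely many cells of $Y$ and sits inside some finite subcomplex $K'_e\subset Y$. This $K'_e$ witnesses the coarse equivariance of $\Phi$ for every cell above $e$.

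The argument is largely bookkeeping, so the main obstacle is conceptual rather than technical: one must notice that the two ingredients doing the real work -- the isometric $\pi$-action on $Y$ (so that $N_D$ commutes with translation by $\gamma$) and properness of the metric on $Y$ (so that a bounded set lies in a finite subcomplex) -- are not supplied by the boundedness of $\Phi$ itself but by the global conventions on proper $\pi$-equivariant geodesic metrics on universal covers set up at the start of Section~2.
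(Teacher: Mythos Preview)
Your argument is correct. The paper itself does not supply a proof of this proposition; it simply refers the reader to \cite{Dr1}. Your approach---pulling the image of a product cell back into a $D$-neighborhood of the finite witness $K_e$ for $f_0$, then using the isometric $\pi$-action and properness of the lifted metric to enclose $\overline{N_D(K_e)}$ in a finite subcomplex---is exactly the natural argument and is almost certainly what the cited reference does. One small remark: since $X$ covers a \emph{finite} complex, there are only finitely many $\pi$-orbits of cells in $X$, so you only need finitely many witnesses $K'_e$; this is not logically required by the per-cell definition of coarse equivariance, but it is worth noting that your construction is uniform in this sense as well.
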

We refer to~\cite{Dr1} for the proof.

\

\subsection{Proof of Theorem~\ref{small}.}

1. $\dim_{mc}\Wi M<n\Rightarrow  \Wi f_*([\Wi M])=0$. We may assume that $f:M\to B\pi$ is cellular  and Lipschitz 
for some metric CW complex structure on $B\pi$. If $\dim_{mc}\Wi M<n$, then by
Proposition~\ref{deform} there is a coarsely Lipschitz cellular
homotopy of $\Wi f:\Wi X\to E\pi$ to a map $g:\Wi X\to E\pi^{(n-1)}$
with a compact projection to $B\pi$. By
Proposition~\ref{uniform=almost}, it is coarsely equivariant. Then
by Proposition~\ref{induced2} it follows that $\Wi
f_*([\Wi M])=0$.

2. $\Wi f_*([\Wi M])=0\Rightarrow \dim_{mc}\Wi M<n$.   Let $o_{\Wi f}$ be the primary obstruction for
a bounded deformation of $\Wi M$ to $E\pi^{(n-1)}$. Note that $o_{\Wi f}=\Wi f^*(o_1)$ where
$o_1$ is the primary obstruction for bounded retraction of
$E\pi^{(n)}$ to $E\pi^{(n-1)}$. Then $\Wi f_*([\Wi M]\cap o_{\Wi f})=\Wi f_*([\Wi M])\cap o_1=0$.
Since
the induced homomorphism
$$\Wi f_*:
H_0^{lf,ce}(\Wi M;L) \to H_0^{lf,ce}(E\pi;L)$$
is an isomorphisms for every $\pi$-module $L$, we obtain $[\Wi M]\cap o_{\Wi f}=0$.
By the Poincar\' e Duality (Proposition~\ref{PD}), $o_{\Wi f}=0$.
We apply Theorem~\ref{obstr-dim} to obtain the inequality $\dim_{mc}\Wi M< n$. 

$\square$

The following statement is in spirit of Brunnbauer- Hanke results~\cite{BH}.
\begin{cor}\label{smallhomology}
For each $n$ there is a subgroup $H^{sm}_n(B\pi)\subset H_n(B\pi)$ of small classes such that 
for an orientable $n$-manifold $M$ its universal covering satisfies $\dim_{mc}\Wi M<n$ if and only if $f_*([M])\in H^{sm}_n(B\pi)$ where $f:M\to B\pi$ is a classifying map for $\Wi M$.
\end{cor}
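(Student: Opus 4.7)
The plan is to define $H^{sm}_n(B\pi)$ as the kernel of the equivariant coarsening homomorphism on degree-$n$ homology, and then to read the corollary off from Theorem~\ref{small} by naturality. Concretely, I would set
$$
H^{sm}_n(B\pi):=\ker\bigl(ec_*^{B\pi}:H_n(B\pi;\Z)\To H^{lf,ce}_n(E\pi;\Z)\bigr).
$$
This is a subgroup because $ec_*^{B\pi}$ is a group homomorphism, so the only real work is to match this with the condition $\Wi f_*([\Wi M])=0$ appearing in \theoref{small}.

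The first preliminary step is to observe that for the trivial coefficient module $L=\Z$ the coarsely equivariant condition on a locally finite chain $\sum\lambda_ee$ is automatic: the set $\{\gamma^{-1}\lambda_{\gamma e}\mid \gamma\in\pi\}=\{\lambda_{\gamma e}\}$ lies in $\Z$, and every subgroup of $\Z$ is finitely generated. Hence $H^{lf,ce}_n(E\pi;\Z)=H^{lf}_n(E\pi;\Z)$, so the target above is exactly the group used in \theoref{small}.

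The second step is naturality. The lift $\Wi f:\Wi M\to E\pi$ of a cellular classifying map $f:M\to B\pi$ is strictly $\pi$-equivariant, hence coarsely equivariant, and $ec_*$ is defined on cochains/chains by inclusion, so the square
$$
\begin{CD}
H_n(M;\Z) @>ec_*^M>> H^{lf,ce}_n(\Wi M;\Z)\\
@Vf_*VV @V\Wi f_*VV\\
H_n(B\pi;\Z) @>ec_*^{B\pi}>> H^{lf,ce}_n(E\pi;\Z)
\end{CD}
$$
commutes (invoking \propref{h-induced} for the right vertical arrow). Under the identification $H_n(M;\Z)=H^{lf,\pi}_n(\Wi M;\Z)$ the fundamental class $[M]$ corresponds to the $\pi$-invariant chain $\sum_{e\in E_n(\Wi M)}e$, which is still the fundamental locally finite chain $[\Wi M]$ after forgetting equivariance; thus $ec_*^M([M])=[\Wi M]$. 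Chasing the diagram gives
$$
\Wi f_*([\Wi M])\;=\;ec_*^{B\pi}\bigl(f_*([M])\bigr).
$$

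The third and final step is the direct combination: by \theoref{small}, $\dim_{mc}\Wi M<n$ is equivalent to $\Wi f_*([\Wi M])=0$, and by the displayed identity this happens exactly when $f_*([M])\in\ker(ec_*^{B\pi})=H^{sm}_n(B\pi)$. Since everything reduces to the already proven \theoref{small} plus the tautological naturality of the equivariant coarsening map on trivial-coefficient chains, there is no genuine obstacle here; the mild care needed is only to check that at the chain level $ec_*^M$ sends the equivariant fundamental chain to the locally finite fundamental chain, and that the coefficient simplification $H^{lf,ce}_n(E\pi;\Z)=H^{lf}_n(E\pi;\Z)$ used to match the two formulations is legitimate.
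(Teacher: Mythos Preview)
Your proposal is correct and follows essentially the same approach as the paper: define $H^{sm}_n(B\pi)$ as the kernel of the equivariant coarsening map $ec_*^{B\pi}$, invoke the naturality identity $\Wi f_*\circ ec_*^M=ec_*^{B\pi}\circ f_*$, and read off the equivalence from \theoref{small}. The paper's own proof is a two-line version of exactly this argument; your additional remarks about the trivial-coefficient identification $H^{lf,ce}_n=H^{lf}_n$ and the image of the fundamental chain under $ec_*^M$ are just making explicit what the paper leaves implicit.
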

\begin{proof}
Define $H_n^{sm}(B\pi)=ker\{ec^{\pi}_*:H_n(B\pi)\to H^{lf,ce}_n(E\pi)\}$. Since $\Wi f_*\circ ec^M_*=ec^{\pi}_*\circ f_*$, we  have that
$$\dim_{mc}\Wi M<n\ \Leftrightarrow\  [\Wi M]\in ker\Wi f_*\ \Leftrightarrow\ f_*([M])\in H^{sm}_n(B\pi).$$
\end{proof}

\section{Coarse homology}

The coarse homology groups $HX_*(Y)$ of a metric space $Y$ were defined by  Roe~\cite{Ro1},\cite{Ro3} by means an anti-\v Cech approximation of $X$.
An anti-\v Cech approximation of a metric space $Y$ is a sequence of uniformly bounded locally finite open covers of $Y$ with finite multiplicity
$$\mathcal U_1\prec \mathcal U_2\prec \mathcal U_3\prec\dots$$
such that  for every $i$ the diameter of elements of $\mathcal U_i$ less than the Lebesgue number of $\mathcal U_{i+1}$,
$$mesh(\mathcal U_i)< Leb(\mathcal U_{i+1}).$$ Then the refinement $\mathcal U_i\prec\mathcal U_{i+1}$ defines the simplicial
map $p_i:N(\mathcal U_i)\to N(\mathcal U_{i+1})$ of the nerves. The coarse homology of $Y$ are defined as the direct limit
of homology groups of the nerves:
$$
HX_k(Y)=\lim_{\rightarrow} \{H_k^{lf}(N(\mathcal U_i)),(p_i)_*\}.
$$
Let $q^i:Y\to N(\mathcal U_i)$ be the projection to the nerve defined by a partition of unity on $Y$ subordinated to $\mathcal U_i$.
For a metric CW complex $Y$ it induces the homomorphism $q^1_*:H_*^{lf}(Y)\to H^{lf}(N(\mathcal U_1))$ which defines  a natural homomorphism $c_Y:H^{lf}_*(Y)\to HX_*(Y)$ called {\em coarsening} (see~\cite{HR}). 
The coarse homology groups are invariant under the coarse equivalence and, in particular, under guasi-isometries. Thus, for a closed manifold $M$ with $\pi_1(M)=\pi$
there is a natural isomorphism $ HX_*(\pi)\to HX_*(\Wi M)$. 

\subsection{Macrsocopically small manifolds.}
The following is an integral version of Gong-Yu's definition of  macroscopically large manifolds~\cite{GY},\cite{NY}.
\begin{defin} The universal covering $\Wi M$ of a closed manifold $M$ with the lifted metric is called {\em macroscopically large} if $c_{\Wi M}([\Wi M])\ne 0$ in $HX_*(\pi)$ for integral coefficients. Otherwise we call $\Wi M$ {\em macroscopically small}.
\end{defin}
In the original Gong-Yu definition~\cite{GY},\cite{NY} the coefficients were not mentioned  though they were assumed to be rational. Latter Brunnbauer and Hanke~\cite{BH} proved that Gong-Yu's concept
of the rational macroscopically large manifolds coincides with  Gromov's notion of a manifold with the infinite rational filling radius~\cite{Gr2}.
In the following theorem we prove a similar statement integrally.

\begin{thm}\label{m-small}
For a closed $n$-manifold $M$ its universal cover $\Wi M$ is macroscopically small
 if and only if $M$, $\dim_{mc}\Wi M<n$.
\end{thm}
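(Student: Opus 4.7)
The plan is to reduce the theorem to Theorem~\ref{small}, which identifies the condition $\dim_{mc}\Wi M<n$ with the vanishing of $\Wi f_*([\Wi M])$ in $H^{lf}_n(E\pi;\Z)$. It then suffices to establish the equivalence
\begin{equation*}
c_{\Wi M}([\Wi M])=0 \text{ in } HX_n(\pi) \;\Longleftrightarrow\; \Wi f_*([\Wi M])=0 \text{ in } H^{lf}_n(E\pi;\Z).
\end{equation*}

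First, I would set up a commutative square comparing the two homology theories via the lift $\Wi f:\Wi M\to E\pi$ of the classifying map. Because $M$ is closed and $f$ induces an isomorphism on fundamental groups, $\Wi f$ is a quasi-isometry, so it induces an isomorphism $\Wi f_*:HX_*(\Wi M)\to HX_*(E\pi)$ on coarse homology; under the canonical identification $HX_*(\pi)=HX_*(\Wi M)=HX_*(E\pi)$ this recovers the isomorphism already noted in the paper. Naturality of the coarsening transformation $c$ with respect to proper Lipschitz maps gives the commutative diagram
\begin{equation*}
\begin{CD}
H^{lf}_n(\Wi M;\Z) @>\Wi f_*>> H^{lf}_n(E\pi;\Z)\\
@Vc_{\Wi M}VV @VVc_{E\pi}V\\
HX_n(\Wi M) @>\Wi f_*>\cong> HX_n(E\pi),
\end{CD}
\end{equation*}
so that $c_{\Wi M}([\Wi M])=0$ if and only if $c_{E\pi}(\Wi f_*([\Wi M]))=0$.

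Second, I would invoke the standard fact from coarse geometry that for a uniformly contractible proper metric space of bounded geometry, the coarsening homomorphism $c:H^{lf}_*(\,\cdot\,;\Z)\to HX_*(\,\cdot\,)$ is an isomorphism. Since $B\pi$ is taken locally finite and $E\pi$ is uniformly contractible (as recalled in the paper), we may choose a CW structure on $B\pi$ whose lift endows $E\pi$ with bounded geometry, and then $c_{E\pi}$ is an isomorphism. Combined with the diagram above this yields the desired equivalence: $c_{\Wi M}([\Wi M])=0$ iff $c_{E\pi}(\Wi f_*([\Wi M]))=0$ iff $\Wi f_*([\Wi M])=0$. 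Applying Theorem~\ref{small} completes the argument.

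The main obstacle I expect is justifying that $c_{E\pi}:H^{lf}_*(E\pi;\Z)\to HX_*(E\pi)$ is an isomorphism; this is the classical Roe-style ``coarsening is an isomorphism on uniformly contractible spaces of bounded geometry'' statement, but one must verify that the chosen model of $E\pi$ satisfies both hypotheses and that the comparison is consistent with the cocompact identification $HX_*(\pi)=HX_*(\Wi M)$. Everything else is formal: quasi-isometry invariance of coarse homology and naturality of $c$ are by now standard, and the appeal to Theorem~\ref{small} packages the nontrivial obstruction-theoretic content already developed in the preceding sections.
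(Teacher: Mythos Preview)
Your reduction to Theorem~\ref{small} via the naturality square is sound, and the backward implication you outline is essentially what the paper does: once $c_{\Wi M}([\Wi M])=0$, the paper picks a stage $q^i:\Wi M\to N(\mathcal U_i)$ at which the class dies, builds a section $s:N(\mathcal U_i)\to E\pi$ by uniform contractibility, and observes that $s\circ q^i$ is properly homotopic to $\Wi f$, whence $\Wi f_*([\Wi M])=0$. That is exactly an explicit proof that $c_{E\pi}$ is \emph{injective}, which is all your diagram chase actually uses.

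The difference---and the place where your proposal has a genuine soft spot---is the appeal to the Roe isomorphism $c_{E\pi}:H^{lf}_*(E\pi)\to HX_*(E\pi)$. That result needs bounded geometry, and you cannot in general ``choose a CW structure on $B\pi$ whose lift endows $E\pi$ with bounded geometry'': the theorem only assumes $M$ closed, so $\pi$ is finitely presented but need not be of type $F$. The paper avoids this by never claiming $c_{E\pi}$ is an isomorphism; it proves injectivity by hand using only uniform contractibility and finite dimensionality of the nerve. If you weaken your second step to ``$c_{E\pi}$ is injective because $E\pi$ is uniformly contractible,'' your argument goes through and is then a clean repackaging of the paper's.

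For the forward direction the paper takes a more elementary route than yours: rather than passing through Theorem~\ref{small}, it uses Proposition~\ref{deform} directly to factor $c_{\Wi M}$ through $H^{lf}_n$ of an $(n-1)$-complex, where the class visibly vanishes. Your route (Theorem~\ref{small} plus naturality plus injectivity of $c_{E\pi}$) is equally valid but invokes more machinery than necessary for that half.
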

\begin{proof}
Suppose that an $n$-manifold $M$, $\dim_{mc}\Wi M<n$. By Proposition~\ref{deform} there is a uniformly cobounded bounded cellular map
$g:\Wi M\to L\subset E\pi^{(n-1)}$ to an $(n-1)$-dimensional complex $L$. Moreover, we may assume that $g$ is a quasi-isometry to $L$.
Therefore, $c_{\Wi M}=c_L\circ g_*$. Since $L$ is $(n-1)$-dimensional, $g_*([\Wi M])=0$. Thus, $\Wi M$ is macroscopically small.

Suppose that $c_{\Wi M}([\Wi M])= 0$. Then $q^i_*([\Wi M])=0$ for some $i$ where $q^i:\Wi M\to N(\mathcal U_i)$ is the projection to the nerve in an anti-\v Cech approximation of $\Wi M$. The uniform contractibility of $E\pi$ and finite-dimensionality of $N(\mathcal U_i)$ imply that there is a map $s:N(\mathcal U_i)\to E\pi$ such that the map $s\circ q^i$ is proper homotopic to $\Wi f$. Therefore, $\Wi f_*([\Wi M])=0$
and by Theorem~\ref{small}, $\dim_{mc}\Wi M<n$.
\end{proof}

We note that in~\cite{Dr4} we called manifolds $M$ with $\dim_{mc}\Wi M<n$ {\em md-small} where $md$ stands for macroscopic dimension.
In this paper in view of Theorem~\ref{m-small} we stick to Gong-Yu's terminology calling $\Wi M$  {\em macroscopically small}.

\begin{problem}\label{mac-large}
{\em Suppose that the universal covering $\Wi M$ of a closed manifold is macroscopically large. Does it follow that $\Wi M$ is macroscopically large rationally?}
\end{problem}

\subsection{Coarse Baum-Connes conjecture} We recall that for a generalized homology theory $h_*$ the locally finite homology groups $h_n^{lf}(X)$
of a locally compact space
are defined as the Steenrod $h_*$-homology of the one-point compactification $\alpha X$.

Roe introduced certain $C^*$-algebra $C^*_{Roe}(X)$ of an open Riemannian manifold $X$ and defined a coarse index map~\cite{Ro1},\cite{Ro2}\cite{R2}
$$\mathcal A_* : K_*^{lf}(X)\to K_*(C^*_{Roe}(X)).$$
The construction of $C^*_{Roe}(X)$ and $\mathcal A$ can be extended to any metric space $X$ \cite{HR}.
In particular, $C^*_{Roe}(X)$ is a coarse invariant. Note that the nerve of a uniformly bounded cover $\mathcal U$ can be given a metric such that
the projection $q:X\to N(\mathcal U)$ is a quasi-isometry.
Then using an anti-\v Cech approximation $\{\mathcal U_i\}$ of $X$ and taking the direct limit
of the coarse index maps $K_*^{lf}(N(\mathcal U_i))\to K_*(C^*_{Roe}(N(\mathcal U_i)))\cong K_*(C^*_{Roe}(X))$
one can defined a homomorphism
$$
\mathcal A_{\infty}:KX_*(X)\to K_*(C^*_{Roe}(X))
$$
called {\em the coarse assembly map}.

\begin{conjec}[Coarse Baum-Connes conjecture~\cite{Ro1},\cite{HR}] For a  proper metric space $X$ 
the coarse assembly map is an isomorphism.
\end{conjec}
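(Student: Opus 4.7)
The plan is to attack the coarse assembly map $\mathcal{A}_{\infty}\colon KX_*(X) \to K_*(C^*_{Roe}(X))$ through Yu's localization-algebra formalism. First I would identify $KX_*(X)$ with $K_*(C^*_L(X))$, where $C^*_L(X)$ is the localization algebra of uniformly norm-continuous paths $[1,\infty) \to C^*_{Roe}(X)$ whose propagation tends to zero. Under this identification the coarse assembly map becomes the evaluation-at-$1$ homomorphism $\mathrm{ev}_*\colon K_*(C^*_L(X)) \to K_*(C^*_{Roe}(X))$, and the problem reduces to showing that $\mathrm{ev}_*$ is an isomorphism. This reformulation is attractive because the localization algebra sees the fine local topology of $X$ (via paths with vanishing propagation) while the Roe algebra sees only the coarse structure, so the obstruction to $\mathrm{ev}_*$ being an isomorphism is precisely the discrepancy between local and coarse $K$-theoretic data.

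Next I would set up a cutting-and-pasting machine. For any coarsely excisive decomposition $X = A \cup B$ one has a six-term Mayer--Vietoris sequence simultaneously for $K_*(C^*_L(-))$ and for $K_*(C^*_{Roe}(-))$, and $\mathrm{ev}_*$ induces a morphism between them. If $\mathrm{ev}_*$ is already known on the pieces $A$, $B$, $A \cap B$, then the five lemma propagates the isomorphism to $X$. The next step would be to verify $\mathrm{ev}_*$ on small rigid models — rays, horoballs, cones on compacta, coarsely flasque subspaces — where an Eilenberg swindle or a Dirac--dual-Dirac construction supplies an explicit inverse, and then to build up $X$ from such pieces by induction on a suitable notion of coarse complexity. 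This is the template that succeeds for bounded-geometry spaces of finite asymptotic dimension, for spaces admitting a coarse embedding into Hilbert space via Yu's $\gamma$-element, and by descent for universal covers of aspherical manifolds whose fundamental group is amenable or $\mathrm{CAT}(0)$.

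The principal obstacle, and the reason the statement as printed is not a theorem, is that an arbitrary proper metric space need not admit any such controlled decomposition into pieces on which $\mathrm{ev}_*$ can be verified by hand. In fact the counterexamples of Higson--Lafforgue--Skandalis show that whenever $X$ contains a coarsely embedded sequence of expanders, $\mathcal{A}_{\infty}$ fails to be injective, so the conjecture is literally false without further assumptions on $X$. Any honest completion of the plan must therefore either restrict to a class of $X$ in which the expander obstruction is excluded — coarse embeddability into Hilbert space, property~A, finite decomposition complexity, and the like — or else introduce a genuinely new device that sees past the obstruction, which is precisely what is not known. Consistent with this, the present paper invokes the coarse Baum--Connes conjecture only as a hypothesis on $\pi_1(M)$ and only for groups (duality groups, arithmetic groups, mapping class groups, $\mathrm{Out}(F_n)$, and so on) for which the Mayer--Vietoris / Dirac--dual-Dirac strategy above has already been carried out.
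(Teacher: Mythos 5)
You have correctly diagnosed the situation: the statement is labelled a conjecture in the paper, is cited from Roe and Higson--Roe, and is never proved there --- it is invoked only as a hypothesis on the fundamental group $\pi_1(M)$ (in Theorems~\ref{main-1} and~\ref{main-2}), for classes of groups where it is already known. So there is no proof in the paper to compare yours against, and your refusal to manufacture one is the right call. Your sketch of the localization-algebra reformulation (identifying $KX_*(X)$ with $K_*(C^*_L(X))$ and the assembly map with evaluation at $1$), the coarse Mayer--Vietoris cutting-and-pasting, and verification on flasque or scalable pieces is an accurate description of how the known partial cases (finite asymptotic dimension, coarse embeddability into Hilbert space, descent for suitable groups) are actually proved, and it makes clear why the argument does not close up for an arbitrary proper metric space.

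Two small corrections to your final paragraph. First, the Higson--Lafforgue--Skandalis expander counterexamples exhibit failure of \emph{surjectivity} of the coarse assembly map --- the ghost (Kazhdan-type) projection in the Roe algebra is not in the image --- rather than failure of injectivity; the injectivity half (the coarse Novikov conjecture) is considerably more robust and holds, for instance, under coarse embeddability into Hilbert space, which is what the applications in this paper actually need for vanishing results. Second, the statement as printed also omits the bounded geometry hypothesis under which the conjecture is normally formulated; without it the conjecture is not even expected in this generality. Neither point affects your main conclusion that the statement is a hypothesis, not a theorem, and that any ``proof'' must restrict the class of spaces or groups, exactly as the paper does.
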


We say that a finitely generated group $\pi$ satisfies the coarse Baum-Connes conjecture if $\pi$  satisfies the coarse Baum-Connes conjecture as a metric space with respect to the word metric for a finite set of generators. For the definition of the coarse $K$-homology in the case of discrete space
one can take a sequence of Rips complexes
$$
\pi\subset R_1(\pi)\subset R_2(\pi)\subset\dots\subset R_m(\pi)\subset\dots
$$
instead of an anti-\v Cech system. Then $$KX_*(\pi)=\lim_{\rightarrow}K_*^{lf}(R_m(\pi)).$$
We recall that the Rips complex $R_m(\pi)$ has $\pi$ as the set of vertices and a finite subset $\{\gamma_0,\dots \gamma_k\}\subset\pi$ spans a $k$-simplex in $R_m(\pi)$ if and only if
$d(\gamma_i,\gamma_j)\le m$ for all $0\le i,j\le k$.

\begin{prop}\label{c}
For every closed manifold $M$ with the fundamental group $\pi$ there is a 
homomorphism $b$ that makes a commutative diagram
$$
\begin{CD}
K_*^{lf}(\Wi M) @ >{c_{\Wi M}}>> KX_*(\Wi M)\\
@V\Wi f_*VV @V{\cong}VV\\
K_*^{lf}(E\pi) @<b<< KX_*(\pi).\\
\end{CD}
$$
The homomorphism $b$ is an isomorphism for geometrically finite groups $\pi$.
\end{prop}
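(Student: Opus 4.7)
\medskip
\noindent\textbf{Proof proposal.} The plan is to construct $b$ via the Rips complex description $KX_*(\pi)=\varinjlim K^{lf}_*(R_m(\pi))$, using the uniform contractibility of $E\pi$ to produce compatible proper maps $s_m:R_m(\pi)\to E\pi$ whose limit will be $b$. First I would fix a proper cellular CW structure on $B\pi$ with a proper metric lifted to $E\pi$, and identify the vertex set of each $R_m(\pi)$ with an orbit $\pi\cdot x_0\subset E\pi$ of a chosen basepoint; the word metric on $\pi$ is quasi-isometric to the restricted metric, so this is a coarse embedding. I would then extend $s_m$ simplex by simplex over the skeleta of $R_m(\pi)$: a $k$-simplex $\sigma$ of $R_m(\pi)$ has vertices within word distance $m$, hence their images lie in a ball of uniformly bounded diameter $D(m)$ in $E\pi$, and by uniform contractibility the partially defined map on $\partial\sigma$ extends over $\sigma$ into a ball of diameter $\rho^{k}(D(m))$. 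The resulting map $s_m$ is proper and a proper homotopy is unique up to proper homotopy for the same reason, so $s_m$ induces a well-defined homomorphism $(s_m)_*:K^{lf}_*(R_m(\pi))\to K^{lf}_*(E\pi)$.

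\medskip
Next I would check compatibility with the inclusions $R_m(\pi)\hookrightarrow R_{m+1}(\pi)$: the maps $s_{m+1}|_{R_m(\pi)}$ and $s_m$ agree on vertices and their extensions differ by maps into balls of uniformly bounded diameter, which by another application of uniform contractibility yields a proper homotopy between them. Hence $(s_m)_*$ descends to $b:KX_*(\pi)\to K^{lf}_*(E\pi)$. To check the diagram, recall $c_{\Wi M}$ is induced by the projection $q^i:\Wi M\to N(\mathcal U_i)$ to the nerve of an anti--\v Cech approximation. A partition-of-unity argument (as in the proof of \theoref{m-small}) together with uniform contractibility shows that $s_i\circ q^i$ is properly homotopic to $\Wi f$ (through the quasi-isometry $\Wi M\cong_{qi}\pi$ sending a fundamental domain to $x_0$), which yields $\Wi f_*=b\circ c_{\Wi M}$ after identifying $KX_*(\Wi M)\cong KX_*(\pi)$ via the coarse equivalence.

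\medskip
For the final assertion, suppose $\pi$ is geometrically finite, so $B\pi$ may be chosen to be a finite complex and $E\pi$ is a proper, uniformly contractible space of bounded geometry, coarsely equivalent to $\pi$. The plan is to build an inverse to $b$ as the composition
$$K^{lf}_*(E\pi)\xrightarrow{\;c_{E\pi}\;} KX_*(E\pi)\xrightarrow{\;\cong\;} KX_*(\pi).$$
One direction, $b\circ c_{E\pi}=\id$, follows from the fact that applying the Rips construction to $E\pi$ (or any cofinal anti--\v Cech system) and then straightening the resulting $s_m$ back into $E\pi$ is properly homotopic to the identity on $E\pi$ by uniform contractibility. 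The other direction, $c_{E\pi}\circ b=\id$, is the same identity in $KX_*$, since the coarsening of $s_m$ factors through the tautological map $R_m(\pi)\to R_{m'}(E\pi)$ for $m'\geq m+\mathrm{const}$.

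\medskip
The main obstacle I expect is the last paragraph: showing that $b$ is actually an isomorphism, not merely a natural map. This amounts to the classical fact that for a proper uniformly contractible space of bounded geometry the coarsening $K^{lf}_*\to KX_*$ is an isomorphism (a result implicit in Higson--Roe \cite{HR}); one must carefully verify that the uniform contractibility function $\rho$ controls all the iterated extensions so that the simplex-by-simplex constructions on arbitrarily large Rips complexes remain proper. The compatibility diagram and the construction of $b$ itself are formal once this controlled extension lemma is in hand.
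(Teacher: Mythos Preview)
Your approach is essentially the same as the paper's: build $b$ as the limit of maps $s_m:R_m(\pi)\to E\pi$ obtained by sending vertices to an orbit $\pi\cdot x_0$ and then filling in simplices using the (uniform) contractibility of $E\pi$. The paper's proof is a one-line sketch that asserts the existence of strictly compatible maps $b_{i+1}|_{R_i(\pi)}=b_i$ (which you can also arrange by extending $b_i$ over the new simplices of $R_{i+1}(\pi)$ rather than building $s_{m+1}$ from scratch), and it leaves the commutativity of the diagram and the geometrically-finite isomorphism statement entirely to the reader; your added paragraphs on those points are correct and supply exactly the details the paper omits.
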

\begin{proof}
We construct a sequence of proper maps $b_i:R_i(\pi)\to E\pi$ such that $b_{i+1}|_{R_i(\pi)}=b_i$ for all $i\in\mathbb N$ where 
$b_0:\pi\to E\pi$ is an orbit of a base point, $b_0(\gamma)=\gamma(x_0)$.
\end{proof}

\

\section{Gromov's scalar curvature conjecture}

We recall that the Gromov conjecture states that $\dim_{mc}\Wi M\le n-2$ for the universal cover of a closed $n$-manifold
with positive scalar curvature. Clearly, the conjecture has two stages where the first stage is to prove the inequality $\dim_{mc}\Wi M\le n-1$.
We call it the weak Gromov conjecture. 
In view of Theorem~\ref{m-small} the weak Gromov conjecture can be reformulated as following: {\em The universal covering of a manifold with
positive scalar curvature is macroscopically small.}

We call manifolds with spin universal
cover {\em almost spin}. 
The spin structure on $\Wi M$ defines  a K-theory orientation. Thus, there exists  the K-theory fundamental class
$[\Wi M]_K\in K_n^{lf}(\Wi M)$. We denote by $ku_*$ the connective K-theory. Then there is the fundamental class
$[\Wi M]_{ku}$ which is taken to $[\Wi M]_K$ under the transformation $ku_*\to KU_*=K_*$.

\subsection{Deformation to the $(n-1)$-skeleton.} Our deformation of $\Wi f:\Wi M\to E\pi$
to $E\pi^{(n-1)}$ is parallel to the  deformation of $f:M\to B\pi$ to $B\pi^{(n-1)}$ performed in~\cite{BD}.
In~\cite{BD} the main ingredients where the Analytic Novikov conjecture and the vanishing theorem or Rosenberg~\cite{R1}.
Here we use the coarse Baum-Connes conjecture and the vanishing theorem of Roe~\cite{Ro1},\cite{HR}.

\begin{thm}[Roe]\label{roe}
Suppose that a closed  almost spin manifold $M$ has positive scalar curvature.
Then the coarse index map
$$\mathcal A_* : K_*^{lf}(\Wi M)\to K_*(C^*_{Roe}(\pi))$$ 
takes the K-theory fundamental class $[\Wi M]_{K}$ to zero.
\end{thm}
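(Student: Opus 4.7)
The plan is to realize the K-theory fundamental class $[\Wi M]_K \in K_n^{lf}(\Wi M)$ analytically as the class of the spin Dirac operator on $\Wi M$, and then use a Lichnerowicz-type argument to show that positive scalar curvature renders this operator uniformly invertible, forcing its coarse index to vanish. Concretely, the almost spin hypothesis gives a spin structure on $\Wi M$, and together with the lifted metric from $M$ this makes $\Wi M$ a complete Riemannian spin manifold of bounded geometry. I would first introduce the associated essentially self-adjoint Dirac operator $D_{\Wi M}$ on spinors. Poincar\'e duality for spin manifolds (in Kasparov's analytic model of K-homology) identifies $[\Wi M]_K$ with the analytic K-homology class $[D_{\Wi M}]$, and under the coarse index map this class is sent to the coarse analytic index $\mathrm{Ind}_c(D_{\Wi M}) \in K_*(C^*_{Roe}(\Wi M)) \cong K_*(C^*_{Roe}(\pi))$, where the last isomorphism comes from any orbit-map coarse equivalence between $\Wi M$ and $\pi$.

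The vanishing itself is the short analytic input. By the Lichnerowicz formula, $D_{\Wi M}^2 = \nabla^*\nabla + \kappa_{\Wi M}/4$, where $\kappa_{\Wi M}$ denotes the scalar curvature of $\Wi M$. Since $M$ is compact and carries positive scalar curvature, there is a uniform constant $\kappa_0 > 0$ with $\kappa_M \ge \kappa_0$, and this bound lifts to $\Wi M$. Hence $D_{\Wi M}^2 \ge (\kappa_0/4)\cdot \mathrm{id}$, so $D_{\Wi M}$ has a bounded inverse. Roe's principle that operator-level invertibility of a Dirac operator kills its class in $K_*(C^*_{Roe}(\Wi M))$ (one propagates a functional-calculus parametrix through the Roe algebra to exhibit the index as zero) then finishes the proof: $\mathcal{A}_*([\Wi M]_K) = \mathrm{Ind}_c(D_{\Wi M}) = 0$.

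The main obstacle, as is typical for coarse index vanishing theorems, is not the Lichnerowicz step itself but the identification of the topological/geometric fundamental class $[\Wi M]_K$ used in the rest of the paper with Kasparov's analytic Dirac class $[D_{\Wi M}]$, together with the compatibility of the topologically defined coarse index map $\mathcal{A}_*$ with the analytic assignment $[D_{\Wi M}] \mapsto \mathrm{Ind}_c(D_{\Wi M})$. Both compatibilities are standard but live in separate machinery (K-homology via Poincar\'e duality on spin manifolds; coarse index via the Paschke-Higson-Roe construction of $C^*_{Roe}$), and stitching them together is the part of the argument that actually requires work; once it is in place, the uniform positivity of scalar curvature on the compact quotient disposes of the geometric content in a single line.
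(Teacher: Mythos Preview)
Your outline is correct and is precisely the standard Lichnerowicz--Roe argument: identify $[\Wi M]_K$ with the analytic Dirac class, lift the uniform lower bound on scalar curvature from the compact quotient, and conclude from $D_{\Wi M}^2 \ge (\kappa_0/4)\,\mathrm{id}$ that the coarse index vanishes. Note, however, that the paper does not actually prove this theorem; it is stated as a result of Roe with references to \cite{Ro1} and \cite{HR}, and is used as a black box in the proof of Theorem~\ref{main-1}, so there is no ``paper's own proof'' to compare against beyond the original sources your sketch is faithfully summarizing.
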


Another parallel with~\cite{BD} is  the usage of the connective K-theory.

\begin{thm}\label{main-1}
Suppose that the fundamental group $\pi$ of a closed almost spin $n$-manifold $M$,
$n\ge 5$,  with a positive scalar curvature metric satisfies the coarse Baum-Connes conjecture
and  the natural transformation $ku_n^{lf}(E\pi)\to K_n^{lf}(E\pi)$ is injective. Then
the weak Gromov conjecture holds true: $\dim_{mc}\Wi M<n$.
\end{thm}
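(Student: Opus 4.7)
The plan is to chase the $K$-theory fundamental class through coarse $K$-homology, vanish it using Roe's theorem, then pull the vanishing back to ordinary homology via connective $K$-theory, and finally apply Theorem~\ref{small}.

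First, since $\Wi M$ is spin there is a $K$-theory fundamental class $[\Wi M]_K\in K_n^{lf}(\Wi M)$ and a connective fundamental class $[\Wi M]_{ku}\in ku_n^{lf}(\Wi M)$, and the natural transformation $ku_*\to K_*$ sends the latter to the former. By Roe's vanishing theorem (Theorem~\ref{roe}), the positive scalar curvature metric on $M$ forces $\mathcal A_*([\Wi M]_K)=0$ in $K_n(C^*_{Roe}(\pi))$. The coarse index factors as $\mathcal A_*=\mathcal A_\infty\circ c_{\Wi M}$ through the coarsening map $c_{\Wi M}\colon K_n^{lf}(\Wi M)\to KX_n(\Wi M)$, and the coarse Baum-Connes conjecture tells us that $\mathcal A_\infty$ is an isomorphism. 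Therefore $c_{\Wi M}([\Wi M]_K)=0$ in $KX_n(\Wi M)\cong KX_n(\pi)$.

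Next I would invoke the commutative square of Proposition~\ref{c}, relating $\Wi f_*$ on $K_*^{lf}$ to $c_{\Wi M}$ via the homomorphism $b$; commutativity immediately gives $\Wi f_*([\Wi M]_K)=0$ in $K_n^{lf}(E\pi)$. By naturality of $ku\to K$, the class $\Wi f_*([\Wi M]_{ku})\in ku_n^{lf}(E\pi)$ maps to $\Wi f_*([\Wi M]_K)=0$, so the injectivity hypothesis on $ku_n^{lf}(E\pi)\to K_n^{lf}(E\pi)$ forces $\Wi f_*([\Wi M]_{ku})=0$.

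Finally, the canonical natural transformation $ku\to H\mathbb Z$ sends $[\Wi M]_{ku}$ to the integer fundamental class $[\Wi M]\in H_n^{lf}(\Wi M;\mathbb Z)$, so naturality yields $\Wi f_*([\Wi M])=0$ in $H_n^{lf}(E\pi;\mathbb Z)$. Theorem~\ref{small} then delivers $\dim_{mc}\Wi M<n$, the weak Gromov conjecture. The main obstacle I anticipate is not any single step but the two structural compatibilities the diagram chase silently assumes: that the coarse index really factors through $c_{\Wi M}$ in the precise form the commutative square of Proposition~\ref{c} requires, and that the spin Thom orientations align so that $[\Wi M]_{ku}$ truly maps to $[\Wi M]$ under $ku\to H\mathbb Z$. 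Both are standard facts, but they form the structural backbone of the argument.
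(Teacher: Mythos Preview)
Your proof is correct and follows essentially the same route as the paper: vanish the coarse index of $[\Wi M]_K$ via Roe's theorem, use the coarse Baum--Connes isomorphism and the diagram of Proposition~\ref{c} to get $\Wi f_*([\Wi M]_K)=0$, then descend through $ku\to K$ (injectivity hypothesis) and $ku\to H\mathbb Z$ to obtain $\Wi f_*([\Wi M])=0$, and finish with Theorem~\ref{small}. The two ``structural compatibilities'' you flag are exactly the points the paper also takes for granted.
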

\begin{proof}
It suffices to show that $\Wi f_*([\Wi M])$ is zero in  $H_*^{lf}(E\pi)$.

Note that the coarse index map  for $\Wi M$ is the composition $$\mathcal A=\mathcal A_{\infty}\circ i_*\circ c_{\Wi M}$$ in the following 
commutative diagram
$$
\begin{CD}
K_*^{lf}(\Wi M) @>c_{\Wi M}>>KX_*(\Wi M) @.\\
@V{\Wi f_*}VV @V{i_*}V{\cong}V @.\\
 K^{lf}_*(E\pi) @<b<< KX_*(\pi) @>{\mathcal A_{\infty}}>> K_*(C^*_{Roe}(\pi)).
\end{CD}
$$
Since the coarse Baum-Connes conjecture holds for $\pi$, the coarse assembly map  $\mathcal A_{\infty}$ is an isomorphism. Then in view of Proposition~\ref{c} and Theorem~\ref{roe}
$\Wi f_*([\Wi M]_K)=0$ in $K_*^{lf}(E\pi)$. By the condition of the theorem we obtain $\Wi f_*([\Wi M]_{ku})=0$ in $ku_*^{lf}(E\pi)$.

We note that the natural transformation of the connected spectrum to the Eilenberg-McLane spectrum $ku\to H(\mathbb Z)$ takes the fundamental class $[\Wi M]_{ku}$ to the fundamental class $[\Wi M]$. Hence $\Wi f_*([\Wi M])=0$. By Theorem~\ref{small},
$\dim_{mc}\Wi M<n$.
\end{proof}

\subsection{Deformation to the $(n-2)$-skeleton.}
\begin{lemma}\label{n-1}
Let $\dim_{mc}\Wi M<n$ for an oriented closed $n$-manifold $M$ with a classifying map $f:M\to B\pi$. Then its lifting $\Wi f:\Wi M\to E\pi$ admits a bounded deformation to a map $g:\Wi M\to E\pi^{(n-1)}$ such that $g(\Wi M^{(n-1)})\subset E\pi^{(n-2)}$.
\end{lemma}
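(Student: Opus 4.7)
Apply \propref{deform} first: from $\dim_{mc}\Wi M<n$ we get a bounded cellular homotopy from $\Wi f$ to a map $g_0\colon\Wi M\to E\pi^{(n-1)}$, and by cellular approximation (kept bounded via the uniform contractibility of $E\pi$) we may assume $g_0$ is cellular, so in particular $g_0(\Wi M^{(n-2)})\subset E\pi^{(n-2)}$. The task reduces to producing a further bounded, coarsely equivariant deformation of $g_0$ within $E\pi^{(n-1)}$ whose endpoint sends $\Wi M^{(n-1)}$ into $E\pi^{(n-2)}$.

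The geometric device I would use is to fix a $\pi$-equivariant choice of an interior ``center'' point $p_e$ in each open $(n-1)$-cell $e$ of $E\pi$ and set $P=\bigcup_e\{p_e\}$. There is then a $\pi$-equivariant deformation retraction $R_t$ of $E\pi^{(n-1)}\setminus P$ onto $E\pi^{(n-2)}$, given cellwise by radial retraction of $e\setminus\{p_e\}$ onto $\partial e$, with displacement bounded by the diameter of a single $(n-1)$-cell. Provided $g_0(\Wi M^{(n-1)})\cap P=\varnothing$, the homotopy $R_t\circ g_0$ on $\Wi M^{(n-1)}$ terminates in $E\pi^{(n-2)}$, and I would extend it over each $n$-cell $\tau\subset\Wi M$ by choosing a collar $S^{n-1}\times[0,s_0]$ of $\partial\tau$ so thin that $g_0$ still avoids $P$ on it (possible by continuity since $g_0(\partial\tau)$ is a compact set disjoint from the closed set $P$); on the collar I interpolate via $R_t$, and outside it I leave $g_0$ unchanged, producing a continuous bounded homotopy whose endpoint $g$ satisfies $g(\Wi M)\subset E\pi^{(n-1)}$ and $g(\Wi M^{(n-1)})\subset E\pi^{(n-2)}$.

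To arrange $g_0(\Wi M^{(n-1)})\cap P=\varnothing$ I would exploit that $\Wi M$ is an $n$-manifold: after a small bounded perturbation to generic position, $g_0^{-1}(P)\cap\Wi M^{(n-1)}$ is a discrete $\pi$-invariant set of isolated points, and each lies on an $(n-1)$-cell of $\Wi M$ which bounds top-dimensional cells. I would push each such point off $\Wi M^{(n-1)}$ into the interior of an adjacent open $n$-cell by a local self-diffeomorphism of $\Wi M$ supported in a small $n$-ball around it, making the choice one $\pi$-orbit at a time with a single representative. The main obstacle is precisely this equivariant, uniformly bounded pushing step: one has to verify that the push-diffeomorphisms can be selected coherently across all orbits so that the composite $g_0\circ h^{-1}$ remains coarsely equivariant and the resulting homotopy is genuinely bounded in the sense of Section~3, compatibly with \propref{uniform=almost}; the subsequent radial retraction and collar extension are then routine.
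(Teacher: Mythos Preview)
Your approach is genuinely different from the paper's, and it has a real gap at the pushing step. In generic position the preimage $\Gamma=g_0^{-1}(P)$ is a properly embedded $1$-manifold in the $n$-manifold $\Wi M$: each point $p_e$ has codimension $n-1$ in its open $(n-1)$-cell, so $\Gamma$ has dimension $n-(n-1)=1$. Near an intersection point $x\in\Gamma\cap\Wi M^{(n-1)}$ lying in the interior of an $(n-1)$-cell $\sigma$, the arc of $\Gamma$ passes transversally from one adjacent $n$-cell $\tau_1$ to the other $\tau_2$. A self-diffeomorphism $h$ of $\Wi M$ supported in a small ball around $x$ replaces $\Gamma$ by $h(\Gamma)$, but the two ends of the arc at $\partial(\text{ball})$ still lie in $\tau_1$ and $\tau_2$, so the pushed arc must still cross $\sigma$. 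Thus the intersection point cannot be removed by any local isotopy; the number of signed crossings of $\Gamma$ through $\sigma$ is an invariant of bounded homotopy of $g_0$, and it is exactly (Poincar\'e dual to) the obstruction cocycle $o_{\Wi g}(\sigma)$ for compressing $g_0|_{\Wi M^{(n-1)}}$ into $E\pi^{(n-2)}$. The dimension count $(n-1)+1=n$ says precisely that this intersection is stable.

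The paper does not try to bypass this obstruction geometrically; it computes it. It shows $o_{\Wi g}=\Wi f^*(o_1)\in H^{n-1}_{ce}(\Wi M;\pi_{n-2}(E\pi^{(n-2)}))$, then uses the hypothesis a second time via \theoref{small}: from $\Wi f_*([\Wi M])=0$ one gets $\Wi f_*([\Wi M]\cap o_{\Wi g})=\Wi f_*([\Wi M])\cap o_1=0$ in $H_1^{lf,ce}(E\pi;L)$, and since $\Wi f_*$ is an isomorphism in degree $1$ this forces $[\Wi M]\cap o_{\Wi g}=0$, whence $o_{\Wi g}=0$ by coarsely equivariant Poincar\'e duality (\propref{PD}). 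Only then does one get the map $q\colon\Wi M^{(n-1)}\to E\pi^{(n-2)}$, and the paper separately argues (using coarse equivariance and contractibility of $E\pi$) that $q$ extends boundedly over the $n$-cells into $E\pi^{(n-1)}$. Your radial-retraction and collar idea would be a fine way to organize the \emph{final} step once $g_0(\Wi M^{(n-1)})$ already avoids $P$, but you have not established that avoidance, and it cannot be established without invoking the vanishing of the obstruction, i.e.\ without using $\dim_{mc}\Wi M<n$ beyond merely producing $g_0$.
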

\begin{proof} First we show that the map $\Wi g=\Wi f|_{\Wi M^{(n-1)}}:\Wi M^{(n-1)}\to E\pi^{(n-1)}$ admits a bounded deformation to a map
$q:\Wi M^{(n-1)}\to E\pi^{(n-2)}$ that agrees with $\Wi f$ on $\Wi M^{(n-3)}$.
In view of Proposition~\ref{obstructiontheory} and Theorem~\ref{obstr-dim} it suffices to show that $o_{\Wi g}=0$ in $H^{n-1}_{ce}(\Wi M;\pi_{n-2}(E\pi^{(n-2)}))$.
Since $o_{\Wi g}=\Wi f^*(o_1)$ and $\Wi f_*([\Wi M])=0$, we obtain $$\Wi f_*([\Wi M]\cap o_{\Wi g})=\Wi f_*([\Wi M])\cap o_1=0.$$
Since $\Wi f_*$ is an isomorphism in dimension 1, $[\Wi M]\cap o_{\Wi g}=0$. By the Poincar\' e duality,
$o_{\Wi g}=0$.

Then we extend the map $q$ to a map $g:\Wi M\to E\pi^{(n-1)}$ which is in a finite distance to $\Wi f$. We may assume that $M$ has one $n$-cell
$e$. Let $\bar e$ be any lift of $e$ to $\Wi M$. Since $q$ is coarsely equivariant, there is a finite complex $K\subset E\pi$ such that $$\bigcup_{\gamma\in\pi}\gamma^{-1}q(\partial\gamma\bar e)\subset K.$$
Since $E\pi$ is contractible, there is a finite complex $F$ containing $K$ such that the inclusion $K\to F$ is nulhomotopic. 
Then the inclusion $K^{(n-2)}\subset F^{(n-1)}$ is nulhomotopic.
Let $\psi_{\gamma}:\gamma\bar e
\to F^{(n-1)}$ be an extension of $\gamma^{-1}q$ restricted to the boundary $\partial\gamma\bar e$ of the $n$-cell $\gamma\bar e\subset\Wi M$, $\gamma\in\pi$. Then the union of maps $\cup_{\gamma\in\pi}\gamma\psi_{\gamma}$ and $q$ defines the required map $g$.
\end{proof}

We note that the Atiyah-Hirzebruch spectral sequence converges for the Steenrod homology for a finite dimensional compact metric spaces~\cite{EH}.
Thus, for finite dimensional $X$ for any generalized homology theory $h_*$ there is the Atiyah-Hirzebruch spectral sequence for the locally finite homology which has the $E^2$-term $E^2_{p,q}=H^{lf}_p(X;h_q(\pt))$ and converges to $h_*^{lf}(X)$.

\begin{lemma}\label{n-2}
Suppose that $g:(\Wi M,\Wi M^{(n-1)})\to(E\pi^{(n-1)},E\pi^{(n-2)})$  is a coarsely equivariant bounded
cellular map of the universal cover of an almost 
spin manifold with the fundamental group $\pi$. Assume that  
$H^{lf}_i(E\pi)=0$ for $0<i<n-1$. Then $\dim_{mc}\Wi M\le n-2$.
\end{lemma}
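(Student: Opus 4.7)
The plan is to show that the given map $g$ admits a bounded coarsely equivariant deformation into $E\pi^{(n-2)}$; by Proposition~\ref{deform} this yields $\dim_{mc}\Wi M\le n-2$. Since $g$ already carries $\Wi M^{(n-1)}$ into $E\pi^{(n-2)}$, the remaining task is to push each restriction $g|_\sigma$ on an $n$-cell $\sigma\subset\Wi M$ into $E\pi^{(n-2)}$ rel $\partial\sigma$, uniformly and coarsely equivariantly.

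First I would frame this as a primary obstruction problem analogous to Proposition~\ref{obstructiontheory}. Assigning to each $n$-cell $\sigma$ of $\Wi M$ the relative homotopy class
\[
o_g(\sigma):=[g|_\sigma\colon(D^n,S^{n-1})\to(E\pi^{(n-1)},E\pi^{(n-2)})]
\]
produces a coarsely equivariant cocycle $o_g\in H^n_{ce}(\Wi M;L)$ with coefficients in the $\pi$-module $L=\pi_n(E\pi^{(n-1)},E\pi^{(n-2)})$. By naturality $o_g=g^*(o_1)$ for a universal obstruction class $o_1\in H^n_{ce}(E\pi^{(n-1)};L)$, and the sought bounded deformation exists iff $o_g=0$ modulo a coarsely equivariant coboundary.

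Next I would apply the coarsely equivariant Poincar\'e duality of Proposition~\ref{PD}, which converts $o_g=0$ into the condition $[\Wi M]\cap o_g=0$ in $H_0^{lf,ce}(\Wi M;L)$. The projection formula for the cap product then yields
\[
g_*\bigl([\Wi M]\cap o_g\bigr)=g_*([\Wi M])\cap o_1\in H_0^{lf,ce}(E\pi^{(n-1)};L).
\]
The right hand side vanishes for a dimension reason: $E\pi^{(n-1)}$ has no $n$-cells, so $H_n^{lf,ce}(E\pi^{(n-1)})=0$ trivially and $g_*([\Wi M])=0$. Composing $g$ with the inclusion $E\pi^{(n-1)}\hookrightarrow E\pi$ and using that the lift $\Wi f$ induces an isomorphism on $H_0^{lf,ce}$ (as in the proof of Theorem~\ref{small}), we descend to $[\Wi M]\cap o_g=0$, hence $o_g=0$, hence the required bounded extension of $g$ into $E\pi^{(n-2)}$ exists.

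The role of the hypothesis $H^{lf}_i(E\pi)=0$ for $0<i<n-1$ is to pin down the coefficient module $L$ and make the chain-level manipulations meaningful. Combined with the $(n-3)$-connectedness of $E\pi^{(n-1)}$ (from the contractibility of $E\pi$), the long exact sequence of the pair $(E\pi,E\pi^{(n-2)})$ and relative Hurewicz then identify $L$ with a sub/quotient of $H^{lf}_{n-1}(E\pi^{(n-2)})$, so that the universal obstruction $o_1$ is represented by an honest coarsely equivariant cocycle to which the cap-product projection formula applies. The main obstacle I anticipate is precisely this technical point: verifying the projection formula for the cap product on the coarsely equivariant chain/cochain complex with twisted coefficients in $L$, and checking that the induced $g_*$ is injective on $H_0^{lf,ce}(\,\cdot\,;L)$, rather than merely on $H_0^{lf,ce}(\,\cdot\,;\Z)$ as is used in Theorem~\ref{small}.
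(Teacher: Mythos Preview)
Your argument has a genuine gap at the naturality step. You write $o_g=g^*(o_1)$ for a ``universal obstruction class'' $o_1\in H^n_{ce}(E\pi^{(n-1)};L)$, but $E\pi^{(n-1)}$ is an $(n-1)$-dimensional complex: it has no $n$-cells, so there is no $n$-cocycle on it to pull back, and $H^n_{ce}(E\pi^{(n-1)};L)=0$ on the cochain level. The naturality trick from Theorem~\ref{small} and Lemma~\ref{n-1} works precisely because there the target is $E\pi$, which \emph{does} have $n$-cells, and $o_1$ is the primary obstruction to retracting $E\pi^{(n)}$ onto $E\pi^{(n-1)}$. Here you are one step deeper in the skeletal filtration and that mechanism disappears. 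Consequently the projection formula $g_*([\Wi M]\cap o_g)=g_*([\Wi M])\cap o_1$ has no content, and the vanishing of $g_*([\Wi M])$ in $H_n^{lf,ce}(E\pi^{(n-1)})$ (which is trivially true) tells you nothing about $o_g$.

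A second red flag is that your argument never uses the almost spin hypothesis; if it worked, it would prove the lemma for all orientable $M$, which is not what is being claimed. The paper's proof uses the almost spin structure in an essential way: it passes to the $ko$-fundamental class $[\Wi M]_{ko}$, and the vanishing hypothesis $H^{lf}_i(E\pi)=0$ for $0<i<n-1$ is used via the Atiyah--Hirzebruch spectral sequence to show $g_*([\Wi M]_{ko})=0$ in $ko^{lf}_n(E\pi^{(n-1)})$. The obstruction to pushing into $E\pi^{(n-2)}$ is then handled not by a cap-product argument but by a geometric construction: one builds an auxiliary complex $L$ from $E\pi^{(n-2)}$ by attaching $n$-cells along Hopf-type maps $S^{n-1}\to S^{n-2}$, lifts $g$ to $f:\Wi M\to L$, and reads off a local degree $d_\sigma$ for each $(n-1)$-cell $\sigma$. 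The key point is that $\pi_{n-1}(S^{n-2})=\Z_2$ for $n\ge 5$, and the $ko$-theory vanishing forces each $d_\sigma$ to be even, which is exactly what is needed to null-homotope the restrictions and deform boundedly into $E\pi^{(n-2)}$. Your proposal does not engage with this $\Z_2$ phenomenon at all.
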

\begin{proof}
First we show that for every proper map $g:\Wi M\to E\pi^{(n-1)}$ with $g(\Wi M^{(n-1)})\subset E\pi^{(n-2)}$ takes the $ko$-fundamental class to zero.
Since $H^{lf}_i(E\pi^{(n-1)})=0$ for $0<i<n-1$,
in the Atiyah-Hirzebruch spectral sequence for the $ko^{lf}$ we have $E^2_{i,n-i}=0$ for all $0<i<n-1$. Therefore, the image
$g_*([\Wi M]_{ko})$ lives in the image of the $E^2$-terms $im\{g_*:E^2_{n-1,0}(\Wi M)\to E^2_{n-1,0}(E\pi^{(n-1)})\}$. We show that the induced homomorphism
$$g_*:H^{lf}_{n-1}(\Wi M;ko_0(pt))\to H^{lf}_{n-1}(E\pi^{(n-1)};ko_0(pt))$$ is zero, then it would follow that $g_*([\Wi M]_{ko})=0$.

Note that for the one-point compactifications the quotient $\alpha\Wi M/\alpha\Wi M^{(n-1)}$ is homeomorphic to the one-point compactification of
an infinite wedge of $n$-spheres.
Then $H_{n-1}(\alpha\Wi M/\alpha\Wi M^{(n-1)})=0$.
From the Steenrod homology exact sequence of the pair $(\alpha\Wi M,\alpha\Wi M^{(n-1)})$  it follows that the inclusion homomorphism
$i_*:H_{n-1}^{lf}(\Wi M^{(n-1)})\to H_{n-1}^{lf}(\Wi M)$ is an epimorphism. Since $H_{n-1}^{lf}(E\pi^{(n-2)})=0$, the commutative diagram
$$
\begin{CD}
H_{n-1}^{lf}(\Wi M) @>g_*>> H_{n-1}^{lf}(E\pi^{(n-1)})\\
@A{i_*}AA @AAA\\
H_{n-1}^{lf}(\Wi M^{(n-1)}) @>>> H_{n-1}^{lf}(E\pi^{(n-2)})\\
\end{CD}
$$
implies that $g_*$ is zero homomorphism.

We construct a CW complex $L$ by replacing every $(n-1)$-dimensional cell $\sigma\subset E\pi^{(n-1)}$ by an $n$-cell $D_{\sigma}$ attached by the composition
$\phi_{\sigma}\circ \beta:S^{n-1}\to E\pi^{(n-2)}$ where $\beta:S^{n-1}\to S^{n-2}$ generates $\pi_{n-1}(S^{n-2})$. 
Let $\psi:L\to E\pi^{(n-1)}$ denote the map which is obtained by extending of the identity map on $E\pi^{(n-2)}$ by  the cones of the attaching maps
$\phi_{\sigma}\circ \beta$.

Since the inclusion $E\pi^{(n-2)}\subset E\pi^{(n-1)}$ is nulhomotopic, the image of $\pi_n(E\pi^{(n-1)},E\pi^{(n-2)})$ under the boundary homomorphism generates $\pi_{n-1}(E\pi^{(n-2)})$. Thus the group $\pi_{n-1}(E\pi^{(n-2)})$ is generated by the boundaries of $(n-1)$-simplices, $\pi_{n-1}(\partial\sigma)$. Therefore any map $\phi:S^{n-1}\to E\pi^{(n-2)}$ is nulhomotopic in $L$. 
Hence
the map
$g:\Wi M^{(n-1)}\to E\pi^{(n-2)}$ extends to a  coarsely equivariant bounded
cellular map $f:\Wi M\to L$. 

For evry $n$-cell $D_{\sigma}$ we may assume that there is  a regular value of $f$, i.e. there is a closed $n$-disk $ D'_{\sigma}\subset D_{\sigma}$, such that
$f^{-1}(D)=\coprod B_i$ and the restriction of $f$ to each $B_i$ is a diffeomorphism between $B_i$ and $D'_{\sigma}$ for all $i$. We join all $B_i$ by tubes in $\Wi M$
in one $n$-ball $B_{\sigma}$. We can do it in such a way that all $B_{\sigma}$ are disjoint and uniformly bounded.

We use the notation $\stackrel{\circ} B$ for the interior of $B$.
Let $r:L\to L$ denote a map that
deforms $D_{\sigma}\setminus
\stackrel{\circ}{D_{\sigma}'}$ to $E\pi^{(n-2)}$ and defines a homeomorphis of $\stackrel{\circ}{D'_{\sigma}}$ to $\stackrel{\circ}D_{\sigma}$ for all $\sigma$. Thus the restriction of $r$ to
$L\setminus(\cup_{\sigma}\stackrel{\circ}{D'_{\sigma}})$ is a retraction onto $E\pi^{(n-2)}$. 

If the degree $d_{\sigma}=\sum_ideg(f|_{B_i})$ is even,
then in view of the fact that $\pi_{n-1}(S^{n-2})={\mathbb Z}_2$ for $n\ge 5$, the map $r\circ f|_{\partial B_{\sigma}}$ is nulhomotopic.
Thus, if $d_{\sigma}$ is even for all $\sigma$, there is a bounded deformation of $g$ into $E\pi^{(n-2)}$.

Consider the commutative diagram generated by exact sequences of pairs and the maps $r\circ f$ and $\psi$:
$$
\begin{CD}
ko_n^{lf}(\Wi M) @>r_*f_*>> ko_n^{lf} (L) @>\psi_*>> ko_n^{lf}(E\pi^{(n-1)})\\
@Vj^1_*VV @Vj^2_*VV @Vj^3_*VV\\
ko_n^{lf}(\Wi M,\Wi M\setminus\stackrel{\circ}B_{\sigma}) @ >r_*f_*>> ko_n^{lf}(L,L\setminus\stackrel{\circ} D_{\sigma}) @>\psi_*>> ko_n^{lf}(E\pi^{(n-1)},E\pi^{(n-1)}\setminus\stackrel{\circ}\sigma)\\
@V\cong Vq_*^1V @V\cong VV @V\cong Vq^3_*V\\
ko_n(B_{\sigma}/\partial B_{\sigma}) @>d_n>> ko_n(D_{\sigma}/\partial D_{\sigma}) @>\Sigma\beta_*>> ko_n(\sigma/\partial\sigma).\\
\end{CD}
$$
As it already has been proved, the conditions of the lemma imply that $\psi_*r_*f_*([\Wi M]_{ko})=0$. Therefore, $\Sigma\beta_*\circ d_{\sigma}\circ q^1_*\circ j^1_*([\Wi M]_{ko})=0$.
Note that the suspension $\Sigma\beta: S^n\to S^{n-1}$ induces the epimorphism $\Sigma\beta_*:\mathbb Z\to{\mathbb Z}_2$ and
$q^1_*\circ j^1_*([\Wi M]_{ko})$ is a generator of $ko_n(B_{\sigma}/\partial B_{\sigma})=\mathbb Z$. Therefore $d_{\sigma}$ cannot be odd.

Thus, $d_{\sigma}$ is even  for all $\sigma$, the maps $r\circ f|_{\partial B_{\sigma}}$ are nulhomotopic, and
there is a bounded deformation of $g$ in $E\pi$ to a map $\bar f:\Wi M\to E\pi^{(n-2)}$. By Proposition~\ref{deform} $\dim_{mc}\Wi M\le n-2$.
\end{proof}

\

\subsection{Duality groups.}
We recall that a group $\pi$ is called a {\em duality group}~\cite{Br} 
if there is a $\pi$-module $D$ such that
$$
H^i(\pi,M)\cong H_{m-i}(\pi,M\otimes D)
$$
for all $\pi$-modules $M$ and all $i$ where $m=cd(\pi)$ is the cohomological dimension of $\pi$.
The  groups that admit finite $B\pi$ are called {\em geometrically finite}
or of the type $FL$.
\begin{prop}\label{duality}
Let $\pi$ be a FL duality group. Then $H^{lf}_i(E\pi;\mathbb Z)=0$ for all $i\ne cd(\pi)$.
\end{prop}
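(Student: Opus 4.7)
The plan is to invoke the Bieri--Eckmann characterization of duality groups, which provides vanishing of $H^i(\pi;\Z\pi)$ for $i\ne m:=\cd(\pi)$, and then to transfer this vanishing to $H^{lf}_i(E\pi;\Z)$ via standard machinery available for FL groups.

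Since $\pi$ is FL, I choose $B\pi$ to be a finite CW model; then $E\pi$ is a locally finite CW complex of dimension $m$, so $H^{lf}_i(E\pi;\Z)=0$ for $i>m$ is automatic. For the range $i\le m$, I would invoke the classical identification (cf.\ Brown's \emph{Cohomology of Groups}, Ch.\ VIII.7), valid for FL groups, between compactly supported cohomology of $E\pi$ and group cohomology with $\Z\pi$-coefficients:
\[
H^i_c(E\pi;\Z)\cong H^i(\pi;\Z\pi).
\]
Applying the duality isomorphism with $M=\Z\pi$ and noting that $\Z\pi\otimes D\cong\mathrm{Ind}_{\{1\}}^{\pi} D$ with the diagonal $\pi$-action (via $\gamma\otimes d\leftrightarrow\gamma\otimes(\gamma^{-1}d)$), Shapiro's lemma gives
\[
H^i(\pi;\Z\pi)\cong H_{m-i}(\{1\};D),
\]
which vanishes for $i\ne m$ and equals $D$ for $i=m$. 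Hence $H^i_c(E\pi;\Z)=0$ for $i\ne m$.

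To pass from compactly supported cohomology to locally finite homology, I would use the Universal Coefficient Theorem. The cellular cochain complex $C^\bullet_c(E\pi;\Z)$ consists of free abelian groups, and the cellular locally finite chain complex $C^{lf}_\bullet(E\pi;\Z)$ identifies with its $\Z$-dual (a direct check via incidence numbers confirms that the differentials agree under the identification $\prod_e\Z=\Hom_\Z(\bigoplus_e\Z,\Z)$), yielding
\[
0\to\mathrm{Ext}(H^{i+1}_c(E\pi;\Z),\Z)\to H^{lf}_i(E\pi;\Z)\to\Hom(H^i_c(E\pi;\Z),\Z)\to 0.
\]
Substituting the vanishing established above gives $H^{lf}_i(E\pi;\Z)=0$ for $i\notin\{m,m-1\}$, together with $H^{lf}_m(E\pi;\Z)=\Hom(D,\Z)$ and $H^{lf}_{m-1}(E\pi;\Z)=\mathrm{Ext}(D,\Z)$.

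The main obstacle is showing $\mathrm{Ext}(D,\Z)=0$, i.e.\ that the dualizing module $D=H^m(\pi;\Z\pi)$ of an FL duality group is free as an abelian group. This is a structural fact: $D$ arises as the cokernel of a $\Z$-linear map between finitely generated free $\Z\pi$-modules coming from the finite free resolution $C_\bullet(E\pi)$, so it is a quotient of a free abelian group; the duality-group hypothesis upgrades the automatic torsion-freeness of $D$ to outright freeness, via Bieri's analysis of dualizing modules for geometrically finite groups. With this input, all three terms of the UCT sequence give the desired vanishing, completing the argument.
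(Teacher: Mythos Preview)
Your proposal is correct and follows essentially the same route as the paper: identify $H^i_c(E\pi;\Z)\cong H^i(\pi;\Z\pi)$ for FL groups, invoke the Bieri--Eckmann characterization of duality groups to get vanishing for $i\ne m$ together with freeness of $D=H^m(\pi;\Z\pi)$ as an abelian group, and then pass to $H^{lf}_*$ via the Universal Coefficient short exact sequence (the paper phrases this last step as Steenrod homology of the one-point compactification, which is the same thing). One small wobble: your sentence ``$D$ arises as the cokernel \dots\ so it is a quotient of a free abelian group; the duality-group hypothesis upgrades the automatic torsion-freeness'' is not right as written, since being a quotient of a free abelian group says nothing about torsion; the freeness of $D$ is a genuine consequence of the duality hypothesis (this is exactly what the paper cites from Brown, Theorem~10.1), and you should simply invoke that theorem rather than attempt to rederive it.
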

\begin{proof} 
From Theorem 10.1 of~\cite{Br} it follows that $H^i(\pi,\mathbb Z\pi)=0$ for $i\ne m= cd(\pi)$ and
$H^m(\pi,\mathbb Z\pi)$ is a free abelian group. In view of the equality $H^i(\pi,\mathbb Z\pi)=H_c^i(E\pi;\mathbb Z)$
for geometrically finite groups
(see~\cite{Br} Theorem 7.5) and the short exact sequence for the Steenrod homology of a compact metric space
$$
0\to Ext(H^{i+1}(X),\mathbb Z)\to H^s_i(X;\mathbb Z)\to Hom(H^i(X),\mathbb Z)\to 0
$$
applied to the one point compactification $\alpha(E\pi)$ of $E\pi$ we obtain that $H^s_i(\alpha(E\pi);\mathbb Z)=0$ for
$i< m$. The equality $H^{lf}_i(E\pi;\mathbb Z)=H^s_i(\alpha(E\pi);\mathbb Z)$ completes the proof.
\end{proof}
We note that every duality group $\pi$ has type $FP$~\cite{Br}, i.e., $B\pi$ is dominated by a finite complex. 
It is still an open problem whether $FP=FL$~\cite{Br}. A group $\pi$ is called virtually $FL$ if it contains a finite index subgroup $\pi'$ which is $FL$.
We note that all classes of virtual duality groups listed in the introduction are virtually $FL$.

\begin{thm}\label{main-2}
Suppose that the fundamental group $\pi$ of a closed almost spin $n$-manifold $M$ with positive scalar curvature,
$n\ge 5$, is a virtual duality FL group that satisfies the coarse Baum-Connes conjecture.
Then the Gromov Conjecture holds for $M$, $\dim_{mc}\Wi M\le n-2$.
\end{thm}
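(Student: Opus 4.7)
The plan is to reduce to the case that $\pi$ itself is an FL duality group by passing to a finite cover, then split on $m=cd(\pi)$: when $m\le n-2$ the inequality is a trivial dimension-count, and when $m\ge n-1$ it follows by chaining Theorem~\ref{main-1}, Lemma~\ref{n-1}, Lemma~\ref{n-2}, and Proposition~\ref{duality}.

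For the reduction, I would choose a finite-index FL duality subgroup $\pi'\subset\pi$ and let $p:M'\to M$ be the corresponding finite cover. The lifted metric on $\Wi{M'}$ agrees with that on $\Wi M$, so $\dim_{mc}\Wi M=\dim_{mc}\Wi{M'}$. Positive scalar curvature and the almost-spin condition descend to $M'$ because $\Wi{M'}=\Wi M$ and $M'\to M$ is a local isometry. Finally, $\pi'$ is quasi-isometric to $\pi$ in the word metric, so the coarse Baum-Connes conjecture passes from $\pi$ to $\pi'$. Hence I may replace $M$ by $M'$ and assume $\pi$ itself is an FL duality group.

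Now I would split on $m=cd(\pi)$. If $m\le n-2$, then $\pi$ admits a finite $K(\pi,1)$ of geometric dimension $m$, and $\Wi M$ is quasi-isometric to $E\pi$ (both are quasi-isometric to the Cayley graph of $\pi$). Since macroscopic dimension is a coarse invariant dominated by the covering dimension of a locally finite simplicial complex, $\dim_{mc}\Wi M\le \dim E\pi\le n-2$, with no use of scalar curvature. If $m\ge n-1$, Proposition~\ref{duality} gives $H_i^{lf}(E\pi;\mathbb Z)=0$ for $0<i<n-1$. The hypotheses of Theorem~\ref{main-1} are met: the coarse Baum-Connes conjecture is assumed, and the injectivity $ku_n^{lf}(E\pi)\to K_n^{lf}(E\pi)$ for duality groups is the remark in the introduction. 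So $\dim_{mc}\Wi M<n$. Lemma~\ref{n-1} then produces a bounded cellular deformation $g:\Wi M\to E\pi^{(n-1)}$ of $\Wi f$ satisfying $g(\Wi M^{(n-1)})\subset E\pi^{(n-2)}$, Proposition~\ref{uniform=almost} upgrades $g$ to a coarsely equivariant map, and Lemma~\ref{n-2} applies directly to conclude $\dim_{mc}\Wi M\le n-2$.

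The substantive analytic and geometric content of the theorem is already encoded in the lemmas being invoked---the parity trick built on $\pi_{n-1}(S^{n-2})=\mathbb Z_2$ and the $ko$-fundamental class inside Lemma~\ref{n-2}, and the use of Roe's vanishing theorem together with coarse Baum-Connes in Theorem~\ref{main-1}. The principal care-points in assembling Theorem~\ref{main-2} are therefore mostly bookkeeping: verifying that all hypotheses (coarse Baum-Connes, $ku$-to-$K$ injectivity, almost spin, positive scalar curvature) transfer cleanly to the finite-index FL duality subgroup, and treating the low cohomological dimension case separately since there the homology-vanishing hypothesis of Lemma~\ref{n-2} is not automatic. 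I expect the latter edge case to be the easiest step, and the transfer of the $ku$-to-$K$ hypothesis (not quite verbatim a coarse invariant) to be the place to be most careful.
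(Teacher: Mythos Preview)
Your plan is correct and is essentially the paper's own argument: pass to a finite-index FL duality subgroup, split on $m=cd(\pi')$, and in the nontrivial range chain Theorem~\ref{main-1}, Lemma~\ref{n-1}, and Lemma~\ref{n-2} using the vanishing from Proposition~\ref{duality}. Two small corrections are in order. First, macroscopic dimension is \emph{not} a coarse invariant (for instance $\dim_{mc}\Z=0$ while $\dim_{mc}\R=1$); the correct justification in the case $m\le n-2$ is simply that the lift $\Wi f:\Wi M\to E\pi$ is itself a continuous uniformly cobounded map to a complex of dimension $\le n-2$. Second, the paper avoids citing the introduction's bare assertion that duality groups satisfy the $ku^{lf}\to K^{lf}$ injectivity, and instead splits your range $m\ge n-1$ into three subcases: for $m>n$ the vanishing $H_i^{lf}(E\pi)=0$ for $0<i\le n$ forces $ku_n^{lf}(E\pi)=0$ by the Atiyah--Hirzebruch spectral sequence, so Theorem~\ref{main-1} applies trivially; for $m=n$ the same spectral sequence gives $K_n^{lf}(E\pi)\cong H_n^{lf}(E\pi)$ (only $E^2_{n,0}$ survives), so $\Wi f_*([\Wi M]_K)=0$ directly yields $\Wi f_*([\Wi M])=0$ and one invokes Theorem~\ref{small} rather than Theorem~\ref{main-1}; and for $m=n-1$ one observes that $\dim_{mc}\Wi M<n$ is automatic since $E\pi$ is already $(n-1)$-dimensional, so one proceeds straight to Lemmas~\ref{n-1} and~\ref{n-2}. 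Your unified treatment of $m\ge n-1$ is cleaner, but you should supply the one-line AHSS check of the $ku_n^{lf}\to K_n^{lf}$ injectivity rather than defer to the introduction.
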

\begin{proof}
Let $\pi'$ be a finite index subgroup of $\pi$ which is a FL duality group. Since $\pi'$ is quasi-isometric to $\pi$, the coarse Baum-Connes conjecture holds
for $\pi'$. Let $M'\to M$ be a covering that corresponds to $\pi'$. Note that the metric on $M'$ lifted from $M$ has positive scalar curvature and
$\Wi M'=\Wi M$.

If $n<\dim\pi'$, then Proposition~\ref{duality} implies that $H_i^{lf}(E\pi')=0$ for $0<i\le n$. Thus, the condition
of Theorem~\ref{main-1} is satisfied. Then $\dim_{mc}\Wi M'<n$. We apply Lemma~\ref{n-1} and Lemma~\ref{n-2} to obtain the inequality $\dim_{mc}\Wi M' \le n-2$. 

If $n>\dim\pi +1$, the inequality $\dim_{mc}\Wi M'\le n-2$ holds automatically.

If $n=\dim\pi +1$ we apply Lemma~\ref{n-2}.

Consider the case $n=\dim\pi$. Let $\Wi f:\Wi M'\to E\pi$ be a lift of the classifying map. As in the proof of Theorem~\ref{main-1} we obtain that
$\Wi f_*([\Wi M']_K)=0$. From the Atiyah-Hirzebruch spectral sequence it follows that $\Wi f_*([\Wi M'])=0$ for the integral locally finite homology.
Then by Theorem~\ref{small}, $$\dim_{mc}\Wi M'<n.$$ Then we apply Lemma~\ref{n-1} and Lemma~\ref{n-2} to complete the proof.
\end{proof}

It is known that the coarse Baum-Connes conjecture implies the Analytic Novikov conjecture. For spin manifolds one can replace the coarse Baum-Connes
conjecture by the Analytic Novikov conjecture. We say that a group $\pi$ virtually satisfies the Analytic Novikov conjecture if it contains a finite index subgroup $\pi'$ that satisfies the Analytic Novikov conjecture.

\begin{thm}\label{main-3}
Suppose that the fundamental group $\pi$ of a closed  spin $n$-manifold $M$,
$n\ge 5$,  with positive scalar curvature is a virtual $FL$ duality group that virtually satisfies the Analytic Novikov conjecture.
Then $\dim_{mc}\Wi M\le n-2$.
\end{thm}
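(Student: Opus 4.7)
The plan is to follow the strategy of Theorem~\ref{main-2}, replacing Roe's coarse vanishing theorem and the coarse Baum--Connes conjecture by Rosenberg's vanishing theorem and the classical Analytic Novikov conjecture.

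First, I reduce to the spin $FL$--duality setting. Since $\pi$ is virtually $FL$ duality and virtually satisfies the Analytic Novikov conjecture, after intersecting the two finite-index subgroups I obtain a finite-index subgroup $\pi' \subset \pi$ that is simultaneously $FL$, duality, and satisfies Analytic Novikov. Let $M' \to M$ be the cover classified by $\pi'$. Then $M'$ is a closed spin $n$-manifold (spin lifts to covers) with the lifted positive-scalar-curvature metric, and $\Wi M' = \Wi M$, so it suffices to prove $\dim_{mc}\Wi M' \le n-2$. Write $m = \cd(\pi')$; by Proposition~\ref{duality}, $H^{lf}_i(E\pi';\mathbb Z) = 0$ for every $i \ne m$.

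Next, I mirror the case split from the proof of Theorem~\ref{main-2}. If $n > m+1$ the conclusion is automatic (the classifying map has image of dimension $\le m \le n-2$), and if $n = m+1$ it follows by applying Lemma~\ref{n-2} directly to a bounded deformation supplied by the dimensional gap. The substantive cases are $n \le m$, where I must first establish the weak inequality $\dim_{mc}\Wi M' < n$ and then upgrade via Lemma~\ref{n-1} and Lemma~\ref{n-2}.

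To prove $\dim_{mc}\Wi M' < n$ in the spin case, I replace the Roe/coarse-BC argument of Theorem~\ref{main-1} as follows. By Rosenberg's vanishing theorem applied to the $\pi'$-equivariant Dirac operator on $\Wi M'$, the classical assembly map $\mu : K_n(B\pi') \to K_n(C^*_r\pi')$ sends $f_*([M']_K)$ to zero. The Analytic Novikov conjecture for $\pi'$ says $\mu$ is injective, so $f_*([M']_K) = 0$ in $K_n(B\pi')$. Under the natural identifications $K_n(M') = K_n^{\pi',lf}(\Wi M')$ and $K_n(B\pi') = K_n^{\pi',lf}(E\pi')$ (free cocompact actions, $\pi'$ is $FL$) the class $[M']_K$ corresponds to the equivariant lift of $[\Wi M']_K$, and the forgetful map to non-equivariant locally finite K-homology sends it to $[\Wi M']_K \in K^{lf}_n(\Wi M')$. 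Applying the forgetful square to the vanishing above yields $\Wi f_*([\Wi M']_K) = 0$ in $K^{lf}_n(E\pi')$.

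Finally, I pass from K-theory to integer coefficients. Because $\pi'$ is FL duality of dimension $m$, the Atiyah--Hirzebruch spectral sequence for $ku^{lf}_*(E\pi')$ (resp.\ $K^{lf}_*(E\pi')$) has all rows $E^2_{p,q}$ vanishing except possibly $p = m$, so the natural transformation $ku^{lf}_n(E\pi') \to K^{lf}_n(E\pi')$ is injective; hence $\Wi f_*([\Wi M']_{ku}) = 0$, and composing with $ku \to H\mathbb Z$ gives $\Wi f_*([\Wi M']) = 0$ in $H^{lf}_n(E\pi';\mathbb Z)$. Theorem~\ref{small} now produces $\dim_{mc}\Wi M' < n$, and Lemma~\ref{n-1} together with Lemma~\ref{n-2} upgrades this to $\dim_{mc}\Wi M \le n-2$. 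The main technical obstacle is the third step: carefully tracking the spin fundamental class through the commutative square relating the classical assembly, the equivariant-to-non-equivariant forgetful map, and the lift $\Wi f$ of the classifying map, so that the conclusion of Analytic Novikov at the level of $K_n(B\pi')$ actually translates into vanishing in $K^{lf}_n(E\pi')$.
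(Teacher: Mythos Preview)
Your proposal is correct and follows essentially the same route as the paper. The paper phrases your key step in terms of the transfer map $\trf_M:K_*(M)\to K_*^{lf}(\Wi M)$, obtaining $\Wi f_*([\Wi M]_K)=\Wi f_*\trf_M([M]_K)=\trf_{B\pi}f_*([M]_K)=0$ in one line; this transfer is precisely your ``equivariant identification plus forgetful'' homomorphism, and the remainder of the argument is then deferred verbatim to the proof of Theorem~\ref{main-2}, exactly as you have spelled out.
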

\begin{proof} As in the proof of Theorem~\ref{main-2} we may assume that $\pi$ is a $FL$ duality group that satisfies the Analytic Novikov conjecture.
We note that the K-theory fundamental class $[M]_K$ goes under the tranfer map $\trf_M:K_*(M)\to K_*^{lf}(\Wi M)$ to the fundsmental class $[\Wi M]_K$.
By the Analytic Novikov conjecture and Rosenberg's vanishing theorem~\cite{R1} it follows that $f_*([M]_K)=0$ where $f:M\to BK$ is a classifying map. Therefore, $\Wi f_*([\Wi M]_K)=\Wi f_*\trf_M([M]_K)=\trf_{B\pi} f_*([M]_K)=0$ for a lift $\Wi f$ of $f$.
The rest of the  proof is the same as in Theorem~\ref{main-2}.
\end{proof}
\begin{remark}
It is natural to expect an extension of Theorem~\ref{main-1} to the following: {\em If $\pi$ satisfies the coarse Baum-Connes conjecture
and $ko^{lf}_*(E\pi)\to KO^{lf}_*(E\pi)$ is injective then  the Gromov conjecture holds true for almost spin manifolds with the fundamental group $\pi$.}
The weak Gromov conjecture follows in this case similarly to the proof of Theorem~\ref{main-1}. The necessary tools to deal with the real K-theory are given in~\cite{Ro4}. Still, the proof of  vanishing of the second obstruction looks like a very technical task due to the nature of the Steenrod generalized homology.
Since it is unclear if the above injectivity condition brings new classes of groups, this goal is not pursued in the paper.
\end{remark}

\section{Rationality conjecture}

Corollary~\ref{smallhomology} defines a subgroup of integral homology group $H_*(B\pi)$ of the macroscopically small homology classes $H_*^{sm}(B\pi)$
in spirit as Brunnbauer-Hanke defined corresponding subgroups for several other classes of manifolds~\cite{BH}. A major difference is that
Brunbauer and Hanke considered the rational homology. The following is closely related to the Gromov conjecture.

{\bf The Rationality Conjecture.}
{\em Macroscopically small homology classes of a group $\pi$ are rational.}

The precise meaning of this conjecture is that there are subgroups $H^{sm}_n(B\pi;\mathbb Q)\subset H_n(B\pi;\mathbb Q)$ such that for an orientable $n$ manifold $M$ the universal covering $\Wi M$ is macroscopically small if and only if $f_*([M])\in H^{sm}_n(B\pi;\mathbb Q)$. 
Clearly, $H^{sm}_n(B\pi;\mathbb Q)=H^{sm}_n(B\pi)\otimes\mathbb Q$.
Therefore, the Rationality Conjecture states that  if $f_*([M])$ is a torsion in $H_n(B\pi)$ then $\dim_{mc}\Wi M<n$. 

Note that
an affirmative answer to Problem~\ref{mac-large} implies the Rationality Conjecture.
\begin{thm}
The Rationality Conjecture implies the weak Gromov conjecture for spin manifolds whose fundamental group satisfies the  Novikov Conjecture.
\end{thm}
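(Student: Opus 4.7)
The plan is to reduce the weak Gromov conjecture to rational inessentiality and then to apply the Rationality Conjecture. By the discussion following Corollary~\ref{smallhomology}, together with the identity $H^{sm}_n(B\pi;\mathbb Q)=H^{sm}_n(B\pi)\otimes\mathbb Q$, the Rationality Conjecture implies: \emph{if $f_*([M])$ is a torsion class in $H_n(B\pi;\mathbb Z)$, then $\dim_{mc}\Wi M<n$}. Thus, for a closed spin $n$-manifold $M$ with positive scalar curvature whose fundamental group $\pi$ satisfies the Novikov conjecture, it suffices to verify that $M$ is rationally inessential, i.e.\ $f_*([M])=0$ in $H_n(B\pi;\mathbb Q)$.

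First I would combine Rosenberg's vanishing theorem with the (rational injectivity form of the) Novikov conjecture. Let $[M]_{KO}\in KO_n(M)$ denote the $KO$-theoretic fundamental class. Since $M$ is spin and carries a metric of positive scalar curvature, Rosenberg's theorem gives $\alpha(f_*[M]_{KO})=0$ in $KO_n(C^*_r\pi)$, where $\alpha$ is the Baum--Connes assembly map. Rational injectivity of $\alpha$, supplied by the Novikov conjecture for $\pi$, then forces
$$f_*[M]_{KO}=0 \quad\text{in } KO_n(B\pi)\otimes\mathbb Q.$$

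Next I would pass to rational ordinary homology via the $KO$-theoretic Chern--Dold character
$$ch^{KO}\colon KO_n(-)\otimes\mathbb Q\;\longrightarrow\;\bigoplus_{k\ge 0} H_{n-4k}(-;\mathbb Q).$$
By naturality, $ch^{KO}(f_*[M]_{KO})=f_*\bigl(ch^{KO}([M]_{KO})\bigr)=0$. The standard computation identifies $ch^{KO}([M]_{KO})=\hat A(M)\cap[M]$, where $\hat A(M)=1+\hat A_1(M)+\cdots\in H^*(M;\mathbb Q)$; hence its component in the top degree $n$ is exactly $[M]\in H_n(M;\mathbb Q)$. Projecting the equality $f_*(\hat A(M)\cap[M])=0$ onto its degree-$n$ summand yields $f_*([M])=0$ in $H_n(B\pi;\mathbb Q)$, i.e.\ $f_*([M])$ is torsion in $H_n(B\pi;\mathbb Z)$. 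An application of the Rationality Conjecture finishes the proof.

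The argument is essentially a concatenation of known pieces, so the main conceptual obstacle is just verifying that the rational-triviality output of Rosenberg+Novikov plugs correctly into the Rationality Conjecture via Corollary~\ref{smallhomology}. The only technical point requiring care is the top-degree identification in the Chern--Dold computation: one must use that $\hat A(M)$ starts with $1$ in degree $0$ so that the degree-$n$ piece of $\hat A(M)\cap[M]$ is $[M]$ itself, not a lower-dimensional correction term.
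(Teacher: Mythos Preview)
Your proof is correct and follows essentially the same route as the paper's: use Rosenberg's vanishing theorem together with the (analytic) Novikov conjecture and the Chern--Dold character to conclude that $f_*([M])=0$ in $H_n(B\pi;\mathbb Q)$, and then invoke the Rationality Conjecture. The paper's argument is extremely terse (it merely cites ``Rosenberg's theorem and the Chern character isomorphism''), whereas you have spelled out the intermediate steps---in particular the passage through $KO_n(B\pi)\otimes\mathbb Q$ and the top-degree identification via $\hat A(M)=1+\cdots$---which is a genuine improvement in clarity but not a different strategy.
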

\begin{proof}
It follows from Rosenberg's theorem and the Chern character isomorphism for homology that $f_*([M])=0$ in $H_n(B\pi;\mathbb Q)$.
Since $0\in H_n^{sm}(B\pi)$, by the Rationality Conjecture $\Wi M$ might be macroscopically small.
\end{proof}

\subsection{The Stone-\v Cech compactification.}
Let $M$ be a closed $n$-manifold and let $f:M\to B\pi$ be a classifying map of its universal cover. Since $M$ is compact, 
the universal covering map $p:\Wi M\to M$ can be extended to the continuous map of the Stone-\v Cech compactification $\bar p:\Wi M\to M$.

\begin{thm}\label{cech} 
For a closed $n$-manifold $M$ the following conditions are equivalent:

(1) $\dim_{mc}\Wi M<n$ for the universal cover $\Wi M$ of $M$.

(2)  The map $f\circ\bar p:\beta(\Wi M)\to B\pi$ of the Stone-\v Cech compactification  can be deformed to the $(n-1)$-dimensional skeleton $B\pi^{(n-1)}$ of $B\pi$.
\end{thm}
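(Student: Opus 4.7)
The plan is to prove the two implications separately, using in both cases the homotopy lifting property of $p_{E\pi}: E\pi \to B\pi$ and the universal property of the Stone-\v Cech compactification.

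For (1) $\Rightarrow$ (2), I would invoke Proposition~\ref{deform} to produce a bounded cellular homotopy $\tilde H: \Wi M \times I \to E\pi$ from $\Wi f$ to a map landing in $E\pi^{(n-1)}$. Composing with $p_{E\pi}$ gives a homotopy $H: \Wi M \times I \to B\pi$ whose image is contained in a bounded neighborhood of $f(M)$, hence in a finite subcomplex $C \subset B\pi$. Since $\tilde H$ is cellular and $C$ has only finitely many cell types, the tracks $t \mapsto H(x, t)$ form a uniformly Lipschitz (hence equicontinuous) family of paths in $C$, so by Arzel\`a--Ascoli the map $x \mapsto H(x, \cdot)$ has image with compact closure $\mathcal K \subset C(I, C)$. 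The Stone-\v Cech property extends $\Wi M \to \mathcal K$ to $\beta\Wi M \to \mathcal K$, and the adjoint (using that $I$ is compact Hausdorff) is a continuous $\bar H: \beta\Wi M \times I \to B\pi$. One then checks $\bar H_0 = f \circ \bar p$ by uniqueness of extensions, and $\bar H_1(\beta\Wi M)$ lies in the closure of $H_1(\Wi M) \subset B\pi^{(n-1)}$, which is closed in $B\pi$.

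For (2) $\Rightarrow$ (1), given $H: \beta\Wi M \times I \to B\pi$ realizing the deformation in (2), I would restrict to $\Wi M \times I$ and apply homotopy lifting with initial lift $\Wi f$ to obtain a continuous $\tilde H: \Wi M \times I \to E\pi$ with $\tilde H_1(\Wi M) \subset E\pi^{(n-1)}$. The critical step is to show that $\tilde H$ is bounded. Two ingredients suffice: first, the compact image $C = H(\beta\Wi M \times I) \subset B\pi$ admits a uniform positive injectivity radius $r$ for $p_{E\pi}$, since the covering is locally trivial and $C$ is covered by finitely many evenly covered open sets; second, because $\beta\Wi M$ is compact, the map $\omega \mapsto H(\omega, \cdot)$ from $\beta\Wi M$ to $C(I, C)$ has compact image, which by Arzel\`a--Ascoli is equicontinuous. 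Choosing $\delta$ with $|s - t| < \delta$ implying $d(H(x, s), H(x, t)) < r/2$ uniformly in $x$, one partitions $I$ into $k$ subintervals of length $< \delta$: on each subinterval the track $H(x, \cdot)$ fits in an evenly covered ball, so the lifted track displaces the same distance, yielding $d(\Wi f(x), \tilde H_1(x)) < kr/2$ uniformly. Proposition~\ref{deform} then gives $\dim_{mc}\Wi M < n$.

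I expect the boundedness argument in (2) $\Rightarrow$ (1) to be the main obstacle: one must convert the soft topological fact that $H$ extends to the compact $\beta\Wi M \times I$ into a quantitative uniform distance bound in the noncompact $E\pi$. The combination of uniform covering injectivity radius over compact sets with Stone-\v Cech-induced equicontinuity of tracks is exactly what bridges the gap. By contrast, (1) $\Rightarrow$ (2) is essentially routine once the cellular bounded homotopy downstairs is converted into a uniformly equicontinuous family.
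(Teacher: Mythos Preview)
Your proof is correct and, for $(1)\Rightarrow(2)$, essentially identical to the paper's: bounded homotopy upstairs, project down into a compact subcomplex, use Arzel\`a--Ascoli to get a compact path space, extend via the Stone--\v Cech property, and take the adjoint. One caution: your justification ``cellular with finitely many cell types $\Rightarrow$ uniformly Lipschitz tracks'' is not quite right as stated --- cellularity controls where the tracks land, not their speed. The paper handles this by appealing to the construction in \cite{Dr1}, Proposition~3.1, which builds the bounded homotopy with $\lambda$-Lipschitz tracks from the outset using the uniform contractibility of $E\pi$. You should either invoke that, or note that any bounded homotopy can be replaced by one with uniformly Lipschitz tracks via the uniform contractibility.

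For $(2)\Rightarrow(1)$ you take a genuinely different route. The paper defines a single continuous function $\Psi:B\pi^I\to\mathbb R$ sending a path to the diameter of any lift (well-defined by $\pi$-invariance of the metric on $E\pi$), and then observes that the adjoint $h:\beta(\Wi M)\to B\pi^I$ of $H$ has compact domain, so $\Psi\circ h$ is bounded; that bound is exactly the diameter of the lifted tracks. Your argument instead combines a uniform injectivity radius for the covering over the compact image $C$, equicontinuity of the tracks (again via Arzel\`a--Ascoli and compactness of $\beta\Wi M$), and a finite partition of $I$ to sum up the displacements. Both arguments are valid. The paper's $\Psi$-function is slicker and does not explicitly use that $p:E\pi\to B\pi$ is a local isometry (only that the lifted metric is $\pi$-invariant), whereas your estimate ``lifted track displaces the same distance'' implicitly uses the local-isometry property of the lifted metric. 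On the other hand, your argument is more elementary and self-contained, avoiding the need to verify continuity of $\Psi$ on the full path space $B\pi^I$, which the paper leaves to the reader.
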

\begin{proof}
$(1)\ \Rightarrow\ (2)$. Let $\dim_{mc}\Wi M<n$. By Proposition~\ref{deform} a lift $\Wi f:\Wi M\to E\pi$ of a classifying map $f:M\to B\pi$ to the universal covering can be deformed to the $(n-1)$-dimensional skeleton $E\pi^{(n-1)}$  by a bounded homotopy. Using the uniform contractibility of $E\pi$ and the nature of the metric on it (see the proof of Proposition 3.1 in~\cite{Dr1}) we can construct a homotopy $H:\Wi M\times I\to E\pi$
joining $\Wi f$ with a map $g:\Wi M\to E\pi^{(n-1)}$ such that for some fixed $\lambda>0$  the restrictions $H|_{x\times I}$ are $\lambda$-Lipschitz for all $x\in X$.  Since the metric on $B\pi$ is proper, it follows that
$p\circ H(\Wi M\times I)$ lies in a compact subcomplex $L$. By  the Ascoli-Arzela theorem the space $Map_{\lambda}(I, L)$ of all $\lambda$-Lipschitz maps $\phi:I\to L$ is compact. The map $p\circ H$ induces a continuous map $h:\Wi M\to Map_{\lambda}(I, L)$
to a compact space. Therefore it admits a continuous extension $\bar h:\beta(\Wi M)\to Map_{\lambda}(I, L)$ to the Stone-\v Cech compactification. The map $\bar h$ defines a homotopy $\bar H:\beta(\Wi M)\times I\to L$ that joins $f\circ\bar p$ with a map to $B\pi^{(n-1)}$.

$(2)\ \Rightarrow\ (1)$. We define a function $\Psi: B\pi^I\to\mathbb R$  on the space of all paths $B\pi^I$ as follows: for every path $\phi:I\to B\pi$ we set
$\Psi(\phi)$ to be the diameter of $\phi'(I)$ for a lift $\phi'$ of $\phi$. Since the metric on $E\pi$ is $\pi$-invariant, $\Psi(\phi)$
does not depend on the choice of $\phi'$. We leave to the reader to show that $\Psi$ is continuous.

A deformation $H:\beta(\Wi M)\to B\pi$ of $f\circ\bar p$ to a map $g:\beta(\Wi M)\to B\pi^{(n-1)}$ defines a continuous map $h:\beta(\Wi M)\to B\pi^I$. In view of compactness of $\beta(\Wi M)$ the function $\Psi\circ h$ is bounded. Then a lift of the homotopy $H|_{\Wi M\times I}:\Wi M\times I\to B\pi$ defines a bounded homotopy of $\Wi f$ to the $(n-1)$-skeleton $E\pi^{(n-1)}$. Proposition~\ref{deform} completes the proof.
\end{proof}

\begin{conjec}\label{sheaf}
For every simply connected  open $n$-manifold $N$ and any locally constant sheaf $\mathcal S$ on 
the Stone-\v Cech compactification $\beta(N)$  with  a free abelian group as the stalk, the cohomology group $H^n(\beta(N),\mathcal S)$ is torsion free.
\end{conjec}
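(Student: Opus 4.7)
The plan is to reduce $H^n(\beta N, \mathcal S)$ to the cohomology of $N$ and its Stone--\v Cech corona $\partial N := \beta N \setminus N$ via the long exact sequence of the pair $(\beta N, \partial N)$. Since $\beta N$ is compact Hausdorff with $N$ open dense in $\beta N$, the sheaf-theoretic excision isomorphism gives $H^*(\beta N, \partial N; \mathcal S) \cong H^*_c(N, \mathcal S|_N)$. Because $N$ is simply connected and dense in $\beta N$, the restriction $\mathcal S|_N$ is the constant sheaf $\underline A$ for some free abelian group $A$.

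Poincar\'e--Lefschetz duality for the orientable open $n$-manifold $N$ yields
$$H^n_c(N, \underline A) \cong H_0(N, A) \cong A, \qquad H^{n-1}_c(N, \underline A) \cong H_1(N, A) = 0,$$
where the last vanishing uses $H_1(N; \Z) = 0$ together with freeness of $A$ to kill the $\mathrm{Tor}$ term. The long exact sequence of the pair therefore collapses to
$$0 \to H^{n-1}(\beta N, \mathcal S) \to H^{n-1}(\partial N, \mathcal S) \to A \to H^n(\beta N, \mathcal S) \to H^n(\partial N, \mathcal S) \to 0.$$
Torsion-freeness of $H^n(\beta N, \mathcal S)$ thus reduces to torsion-freeness of $H^n(\partial N, \mathcal S|_{\partial N})$ together with torsion-freeness of the quotient $A / \mathrm{im}\,(H^{n-1}(\partial N, \mathcal S) \to A)$.

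To control the corona contribution I would exploit a covering dimension bound: by the classical theorem $\dim \beta X = \dim X$ for normal $X$, we have $\dim \partial N \le \dim \beta N = n$, so $H^*(\partial N, \mathcal S|_{\partial N})$ can be computed as a filtered colimit of \v Cech cohomologies of finite open covers of multiplicity $\le n+1$, hence of nerves with simplicial dimension $\le n$. For each such nerve $K$ with the pulled-back local system $\mathcal L$ of free abelian stalks, a universal-coefficient-type argument reduces torsion in $H^n(K, \mathcal L)$ to torsion in $H_{n-1}(K)$, which one attempts to kill by passing to refinements.

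The main obstacle is precisely this last step. The corona $\beta N \setminus N$ is highly non-metrizable and has pathological local structure (for $N = \R$ it is an indecomposable continuum), and in general a compact Hausdorff space of covering dimension $n$ may carry torsion in its top-dimensional sheaf cohomology (as $\R P^2$ illustrates). Overcoming this likely requires a structural input tailored to coronas of open $n$-manifolds: for instance, constructing a cofinal system of finite open covers of $\beta N$ from uniformly locally finite triangulations of $N$ so that the resulting nerves are homotopy equivalent to polyhedra sharing the $(n-1)$-dimensional homotopy type of the open manifold $N$, thereby forcing $H_{n-1}$ of the nerves to be free. A secondary obstacle is that the image of the connecting map $H^{n-1}(\partial N, \mathcal S) \to A$ need not be a direct summand, so even with $H^n(\partial N, \mathcal S)$ torsion-free one still has to rule out torsion appearing in the extension class.
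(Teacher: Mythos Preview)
The statement you are attempting to prove is \emph{not} a theorem in the paper; it is Conjecture~\ref{sheaf}, explicitly stated as an open problem. The paper offers no proof. Its only remark on the matter is the sentence immediately following the conjecture: for a \emph{constant} sheaf $\mathcal S$, one has $H^n(\beta(N),\mathcal S)=0$ by a result of Calder--Siegel~\cite{CS}, since $H^n(N)=0$. The general case with nontrivial monodromy on the corona is left entirely open and is used only hypothetically, as an input to the Rationality Conjecture.

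Your proposal is therefore not competing against any argument in the paper. As a strategy it is a sensible first reduction: the long exact sequence of the pair $(\beta N,\partial N)$, excision to $H^*_c(N;\mathcal S|_N)$, constancy of $\mathcal S|_N$ by simple connectivity, and Poincar\'e duality on the open manifold are all correct, and you arrive at the honest conclusion that everything hinges on torsion in $H^n(\partial N;\mathcal S|_{\partial N})$ and in the cokernel of the connecting map into $A$. But you then correctly identify that you have no mechanism to control either. The corona $\partial N$ is a non-metrizable compactum on which $\mathcal S$ need not be constant, and there is no general reason for top-dimensional sheaf cohomology of such a space with free-abelian stalks to be torsion free; your own example of $\mathbb RP^2$ shows the dimension bound alone is insufficient. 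The suggestion to build cofinal covers from triangulations of $N$ is plausible heuristics but not an argument. In short, your write-up is a fair outline of why the conjecture is nontrivial, not a proof, and the paper does not claim otherwise.
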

Since $H^n(N)=0$, it follows from~\cite{CS} that $H^n(\beta(N),\mathcal S)=0$ in the case when $\mathcal S$ is a constant sheaf.
\begin{prop}
Conjecture~\ref{sheaf} implies the Rationality Conjecture.
\end{prop}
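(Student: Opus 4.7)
The plan is to use \theoref{cech} to recast the Rationality Conjecture as an obstruction-theoretic question for deforming $f\circ\bar p:\beta(\Wi M)\to B\pi$ into the $(n-1)$-skeleton $B\pi^{(n-1)}$, then to show that when $f_*([M])$ is torsion the primary obstruction is itself a torsion class, and finally to invoke Conjecture~\ref{sheaf} to force it to vanish.

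Let $M$ be a closed oriented $n$-manifold with classifying map $f:M\to B\pi$, and assume $f_*([M])\in H_n(B\pi;\Z)$ is torsion. The primary-obstruction coefficient group is $L=\pi_{n-1}(B\pi^{(n-1)})$, which by the Hurewicz theorem coincides with $H_{n-1}(E\pi^{(n-1)})$ and is therefore free abelian (as a subgroup of the free cellular chain group $C_{n-1}(E\pi^{(n-1)})$). Because $B\pi$ is aspherical, the only essential obstruction to deforming a map $g:X\to B\pi$ into $B\pi^{(n-1)}$ is the primary class $g^*(o_1)\in H^n(X;g^*L)$, where $o_1\in H^n(B\pi;L)$ is the universal obstruction to retracting $B\pi$ onto $B\pi^{(n-1)}$ and $H^*$ here denotes \v Cech/sheaf cohomology with the pulled-back local system.

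I would next compute the obstruction via Poincar\'e duality on $M$. Writing $o_f=f^*(o_1)\in H^n(M;f^*L)$ and using the projection formula for the cap product,
$$f_*(o_f\cap [M])=f_*\bigl(f^*(o_1)\cap [M]\bigr)=o_1\cap f_*([M])\in H_0(B\pi;L)=L_\pi.$$
Since $f$ induces the identity on $H_0(\,\cdot\,;L)=L_\pi$, this identifies $o_f\cap[M]\in H_0(M;f^*L)$ with $o_1\cap f_*([M])$. If $k\cdot f_*([M])=0$, then $k(o_f\cap[M])=o_1\cap(kf_*([M]))=0$, so $o_f\cap[M]$ is torsion; by the Poincar\'e duality isomorphism $H^n(M;f^*L)\cong H_0(M;f^*L)$ given by cap product with $[M]$, so is $o_f$. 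Pulling back along $\bar p$, the obstruction $(f\bar p)^*(o_1)=\bar p^*(o_f)$ is a torsion class in $H^n(\beta(\Wi M);\bar p^*f^*L)$.

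The coefficient sheaf $\bar p^*f^*L$ is locally constant on $\beta(\Wi M)$ with free abelian stalk $L$, and $\Wi M$ is a simply connected open $n$-manifold, so Conjecture~\ref{sheaf} guarantees that $H^n(\beta(\Wi M);\bar p^*f^*L)$ is torsion free; hence $\bar p^*(o_f)=0$, and \theoref{cech} yields $\dim_{mc}\Wi M<n$. The main technical obstacle is to verify that the \v Cech primary obstruction really is the only impediment to the desired deformation: on the CW space $M$ the elementary obstruction theory suffices, but $\beta(\Wi M)$ is far from a CW complex, so one must set up obstruction theory in a sheaf/\v Cech framework -- for instance by realizing $\beta(\Wi M)$ as an inverse limit of compact polyhedra and exploiting continuity of \v Cech cohomology -- and use the asphericity of $B\pi$ together with the freeness of $L$ to rule out higher obstructions.
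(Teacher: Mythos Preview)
Your approach is essentially the paper's: reduce to the primary obstruction for $f\circ\bar p:\beta(\Wi M)\to B\pi$ via \theoref{cech}, show that obstruction is torsion, and kill it with Conjecture~\ref{sheaf}. Your Poincar\'e-duality computation showing $o_f$ is torsion is more detailed than the paper's, which simply asserts ``since $f_*([M])$ is a torsion, it follows that $o_f$ has finite order''; your argument via $f_*(o_f\cap[M])=o_1\cap f_*([M])$ and the isomorphism $f_*:H_0(M;L)\to H_0(B\pi;L)$ is a clean way to fill that in.

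Where you and the paper diverge is the handling of higher obstructions. You invoke asphericity of $B\pi$ and flag it as a technical obstacle, proposing to use asphericity together with freeness of $L$ to rule them out. That is not the right lever: $B\pi^{(n-1)}$ is \emph{not} aspherical, and the relative groups $\pi_{k}(B\pi,B\pi^{(n-1)})\cong\pi_{k-1}(B\pi^{(n-1)})$ are typically nonzero for $k>n$, so asphericity of $B\pi$ alone does not kill the secondary and higher obstructions. The paper's argument is simpler and correct: it uses the classical fact that $\dim\beta X=\dim X$ for normal $X$, so $\dim\beta(\Wi M)=n$ and hence $\check H^{k}(\beta(\Wi M);\mathcal S)=0$ for all $k>n$ and every sheaf $\mathcal S$. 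This makes the primary obstruction the only one for purely dimensional reasons, and also sidesteps your worry about setting up obstruction theory on a non-CW space, since \v Cech obstruction theory on a compact space of covering dimension $n$ terminates at level $n$.
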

\begin{proof} Suppose that $f_*([M])=0$ in $H_n(B\pi;\mathbb Q)$. Thus, $f_*([M])$ is a torsion in $H_n(B\pi)$. Since $\dim \beta(\Wi M)=n$,
The primary obstruction $\tilde o$ to deform $f\circ\bar p:\beta(\Wi M)\to B\pi$ to $B\pi^{(n-1)}$ is the only obstruction and it lives in the $n$-dimensional
cohomology group with locally constant coefficient system $(f\circ\bar p)^*(\mathcal S)$ where $\mathcal S$ is the system on $B\pi$ that corresponds to the $\pi$-module $\pi_{n-1}(B\pi^{(n-1)})$. Note that the stalk of $\mathcal C$ is a free abelian group. Note that $\tilde o=\bar p^*(o_f)$ where $o_f$ is the primary obstruction to deform $f$ into $B\pi^{(n-1)}$. Since $f_*([M])$ is a torsion, it follows that $o_f$ has finite order. Therefore $\tilde o$ has finite order.
Then Conjecture~\ref{sheaf} implies that $\tilde o=0$. Then by Theorem~\ref{cech} $\dim_{mc}\Wi M<n$.
\end{proof}

\subsection{The Berstein-Schwarz cohomology class.}
We recall that the Berstein-Schwarz class~\cite{Sw},\cite{Be}  $b=b_{\pi}\in H^1(\pi,I(\pi))$ of a group $\pi$ is defined by the cochain
on the Cayley graph $\phi: G\to I(\pi)$ which takes an ordered  edge $[g,g']$ to $g'-g$. Here $I(\pi)$ is the augmentation ideal of
the group ring $\mathbb Z\pi$. We use notation $I(\pi)^k$ for the $k$-times tensor product $I(\pi)\otimes\dots\otimes I(\pi)$
over $\mathbb Z$. Then the cup product $b^k=b\smile\dots\smile b$ is defined as an element of $H^k(\pi,I(\pi)^k)$.

The Berstein-Schwarz class is universal in the following sense.
\begin{thm}[{[Sw], [DR]}]\label{universal}
For every $\pi$-module $L$ and every element $\alpha\in H^k(\pi,L)$ there is a $\pi$-homomorphism $\xi:I(\pi)^k\to L$ such that $\xi^*(b^k)=\alpha$ where $$\xi^*:H^k(\pi,I(\pi)^k)\to H^k(\pi,L)$$ is the coefficient homomorphism.
\end{thm}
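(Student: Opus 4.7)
The plan is to realize $b^k$ as a universal class attached to an explicit partial free resolution of $\Z$ whose $k$-th syzygy is $I(\pi)^{\otimes k}$; any $\alpha \in H^k(\pi, L)$ will then be represented by a $\pi$-homomorphism $I(\pi)^{\otimes k} \to L$, and that homomorphism will serve as the desired $\xi$.

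First I would construct the exact sequence
$$0 \to I(\pi)^{\otimes k} \to \Z\pi \otimes I(\pi)^{\otimes(k-1)} \to \cdots \to \Z\pi \otimes I(\pi) \to \Z\pi \to \Z \to 0$$
(tensor products over $\Z$, diagonal $\pi$-action) by splicing the short exact sequences
$$0 \to I(\pi)^{\otimes(j+1)} \to \Z\pi \otimes I(\pi)^{\otimes j} \to I(\pi)^{\otimes j} \to 0, \qquad j=0,1,\ldots,k-1,$$
obtained by tensoring $0 \to I(\pi) \to \Z\pi \to \Z \to 0$ with the torsion-free abelian group $I(\pi)^{\otimes j}$. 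The twist $g \otimes x \mapsto g \otimes g^{-1}x$ identifies each middle term with $\Z\pi \otimes I(\pi)^{\otimes j}$ acted on only by left multiplication on the first factor, so these are free $\Z\pi$-modules (since $I(\pi)^{\otimes j}$ is free abelian). Completing by free modules above $I(\pi)^{\otimes k}$ yields a full free resolution $P_\bullet \to \Z$.

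With this resolution in hand, a class in $H^k(\pi, L)$ is a $\pi$-homomorphism $P_k \to L$ vanishing on $\ker(P_k \twoheadrightarrow I(\pi)^{\otimes k})$, modulo restrictions of maps from $P_{k-1} = \Z\pi \otimes I(\pi)^{\otimes(k-1)}$. Equivalently, every class in $H^k(\pi, L)$ is represented by some $\xi \in \Hom_\pi(I(\pi)^{\otimes k}, L)$, and the resulting map $\Hom_\pi(I(\pi)^{\otimes k}, L) \twoheadrightarrow H^k(\pi, L)$ is the natural one coming from the resolution. The crux is then to identify $b^k$ under this description (with $L = I(\pi)^{\otimes k}$) as the class of the identity $\id \colon I(\pi)^{\otimes k} \to I(\pi)^{\otimes k}$. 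For $k=1$ this reduces to the standard fact that $b$ is represented in $\operatorname{Ext}^1_{\Z\pi}(\Z, I(\pi))$ by the defining extension, whose classifying cocycle is $\id_{I(\pi)}$. For general $k$ one appeals to the agreement of the Yoneda and cup products on $\operatorname{Ext}^*_{\Z\pi}(\Z, -)$: the $k$-fold Yoneda splice of the extensions above is precisely the long sequence displayed, and its classifying cocycle in our resolution is $\id_{I(\pi)^{\otimes k}}$. Granted this, given any $\alpha \in H^k(\pi, L)$ and its representing $\xi$, naturality of the coefficient homomorphism gives $\xi^*(b^k) = [\xi \circ \id] = [\xi] = \alpha$, which is the desired identity.

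The main obstacle is the identification of $b^k$ with $[\id]$, which demands careful bookkeeping of the cup-product/Yoneda-product correspondence through the iterated connecting homomorphisms of the spliced sequences. The remaining points—freeness of the diagonal induced modules and the surjection $\Hom_\pi(I(\pi)^{\otimes k}, L) \twoheadrightarrow H^k(\pi, L)$—are routine once the resolution is in place.
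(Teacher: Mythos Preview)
The paper does not actually prove this theorem: it is stated with attribution to [Sw] and [DR] and used as a black box, so there is no ``paper's own proof'' to compare against. Your approach---building the partial free resolution of $\Z$ with $k$-th syzygy $I(\pi)^{\otimes k}$ by splicing the tensored augmentation sequences, and then identifying $b^k$ with the class of $\id_{I(\pi)^{\otimes k}}$---is precisely the argument given in the cited reference [DR] (Dranishnikov--Rudyak), so your proposal is correct and on the standard track.

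One small clarification on the step you flag as the main obstacle: rather than invoking the cup/Yoneda correspondence abstractly, the cleanest route is to show directly that the connecting homomorphism $\delta_j: H^j(\pi, I(\pi)^{\otimes j}) \to H^{j+1}(\pi, I(\pi)^{\otimes(j+1)})$ associated to the short exact sequence $0 \to I(\pi)^{\otimes(j+1)} \to \Z\pi \otimes I(\pi)^{\otimes j} \to I(\pi)^{\otimes j} \to 0$ is given by cup product with $b$. Then $b^k = \delta_{k-1}\circ\cdots\circ\delta_0(1)$ is automatically the image of $1 \in H^0(\pi,\Z)$ under the iterated connecting maps, which by dimension-shifting is exactly the class represented by $\id_{I(\pi)^{\otimes k}}$ in your resolution. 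This bypasses any delicate bookkeeping with Yoneda products and makes the induction transparent.
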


By bringing in the Berstein-Schwarz class we can extend Theorem~\ref{small} to the following.

\begin{thm}\label{small2}
For a closed oriented $n$-manifold $M$ with the classifying map $f:M\to B\pi$ and its lift to the universal covers
$\Wi f:\Wi M\to E\pi$ the following are equivalent:

1. $\dim_{mc}\Wi M<n$;

2. $\Wi f_*([\Wi M])=0$ in $H^{lf}_n(E\pi;\mathbb Z)$ where $[\Wi M]\in H^{lf}_n(\Wi M;\mathbb Z)$ is the fundamental class of $\Wi M$;

3. $f_*([M])\in ker(ec_*^{\pi})$ where $[M]$ is the fundamental class of $M$;

4. $f^*(b^n)\in ker(ec^*_M)$ where $b$ is the Berstein-Schwarz class of $\pi$.
\end{thm}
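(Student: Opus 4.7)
The plan is to close the cycle $(1)\Leftrightarrow(2)\Rightarrow(3)\Rightarrow(4)\Rightarrow(1)$, taking Theorem~\ref{small} as the starting equivalence $(1)\Leftrightarrow(2)$. For $(2)\Leftrightarrow(3)$ I would invoke the naturality identity $\Wi f_*\circ ec_*^M=ec_*^\pi\circ f_*$ already used in Corollary~\ref{smallhomology}, together with the tautological fact that the equivariant coarsening sends $[M]$ to $[\Wi M]$; these give $\Wi f_*([\Wi M])=ec_*^\pi(f_*[M])$. A minor bookkeeping point is that for the trivial $\pi$-module $\Z$ every subgroup of $\Z$ is finitely generated, so every locally finite chain is automatically coarsely equivariant and $H^{lf,ce}_n(E\pi;\Z)=H^{lf}_n(E\pi;\Z)$; this matches the target groups appearing in (2) and (3).

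For $(3)\Rightarrow(4)$ I would start from $ec_*^\pi(f_*[M])=0$ and cap with the coarsely equivariant pullback $ec_{B\pi}^*(b^n)$, then transport everything back to $\Wi M$ using naturality of the cap product under both the coarsening $ec_*$ and the pushforward $\Wi f_*$. This yields
\[
\Wi f_*\bigl(ec_*^M(f^*(b^n)\cap[M])\bigr)=ec_*^\pi\bigl(f_*[M]\cap b^n\bigr)=ec_*^\pi(f_*[M])\cap ec_{B\pi}^*(b^n)=0
\]
in $H_0^{lf,ce}(E\pi;I(\pi)^n)$. Since $\Wi f_*$ is an isomorphism in degree zero on coarsely equivariant locally finite homology (as noted in the proof of Theorem~\ref{small}), this forces $ec_*^M(f^*(b^n)\cap[M])=0$. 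The commutative Poincar\'e duality square of Proposition~\ref{PD} rewrites this as $ec_M^*(f^*(b^n))\cap[\Wi M]=0$, and the coarse Poincar\'e duality isomorphism $-\cap[\Wi M]$ then gives $ec_M^*(f^*(b^n))=0$, which is condition~(4).

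For $(4)\Rightarrow(1)$ the point is to recover the vanishing of the primary obstruction from (4) via universality of $b^n$. Let $o_1\in H^n(B\pi;\pi_{n-1}(B\pi^{(n-1)}))$ denote the classical primary obstruction to retracting $B\pi$ onto its $(n-1)$-skeleton. Theorem~\ref{universal} furnishes a $\pi$-module homomorphism $\xi\colon I(\pi)^n\to\pi_{n-1}(B\pi^{(n-1)})$ with $o_1=\xi^*(b^n)$, and naturality of $ec_M^*$ in the coefficient module gives
\[
o_{\Wi f}=ec_M^*(f^*(o_1))=\xi^*\bigl(ec_M^*(f^*(b^n))\bigr)=0.
\]
Theorem~\ref{obstr-dim} then yields $\dim_{mc}\Wi M<n$, closing the loop. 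The main technical obstacle I anticipate is in step $(3)\Rightarrow(4)$: the two cap product naturality statements invoked there---compatibility of $\cap$ with $\Wi f_*$ and with the coarsening $ec_*$---are formally analogous to the classical identities but have to be verified directly on the coarsely equivariant chain complexes of Section~3, as they are not in the literature. Once this compatibility is in hand, the remainder is purely formal diagram chasing.
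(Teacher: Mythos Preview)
Your proposal is correct and follows essentially the same route as the paper: the cycle $(1)\Leftrightarrow(2)\Rightarrow(3)\Rightarrow(4)\Rightarrow(1)$, with $(3)\Rightarrow(4)$ handled via cap-product naturality, the degree-zero isomorphism of $\Wi f_*$, and coarse Poincar\'e duality (Proposition~\ref{PD}), and $(4)\Rightarrow(1)$ via the universality of the Berstein--Schwarz class (Theorem~\ref{universal}) combined with Theorem~\ref{obstr-dim}. The only differences are organizational---you invoke Theorem~\ref{small} for $(1)\Leftrightarrow(2)$ rather than reproving $(1)\Rightarrow(2)$---and you rightly flag the cap-product compatibility with $ec_*$ as something to verify on chains, which the paper uses without comment.
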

\begin{proof}
1. $\Rightarrow$ 2. We may assume that $f:M\to B\pi$ is cellular  and Lipschitz 
for some metric CW complex structure on $B\pi$. If $\dim_{mc}\Wi M<n$, then by
Proposition~\ref{deform} there is a bounded cellular
homotopy of $\Wi f:\Wi X\to E\pi$ to a map $g:\Wi X\to E\pi^{(n-1)}$
with a compact projection to $B\pi$. By
Proposition~\ref{uniform=almost}, it is coarsely equivariant. Then
by Proposition~\ref{induced2} it follows that $\Wi
f_*([\Wi M]))=0$.

2. $\Rightarrow $ 3. $ec_*^{\pi}(f_*([M]))=\Wi
f_*(Pert_*^M([M]))=0$ and
hence, $f_*([M])\in ker(ec_*^{\pi})$.

3. $\Rightarrow $ 4.  If $f_*([M])\in ker(Pert_*^{\pi})$, then
$ec_*^{\pi}(f_*([M])\cap b^k)=0$. Since  the commutative
diagram
$$
\begin{CD}
H_0^{lf,ce}(\Wi M;I(\pi)^n) @>\bar f_*>> H_0^{lf,ce}(E\pi;I(\pi)^n)\\
@Aec_*^MAA @Aec_*^{\pi}AA\\
H_0(M;I(\pi)^n) @>f_*>> H_0(B\pi;I(\pi)^n)\\
\end{CD}
$$
has isomorphisms for horizontal arrows, $ec_*^M([M]\cap
(f^*(b^n))=0$. Thus, $ec_*^M([M])\cap ec^*_M(f^*(b^n))=0$.
By the Poincare Duality, $ec^*_M((b^n)=0$.

4. $\Rightarrow$ 1. We show that the obstruction $o_{\Wi f}$ to the inequality $\dim_{mc}\Wi M<n$ is  zero
and apply Theorem~\ref{obstr-dim}. By Theorem~\ref{universal} there is a $\pi$-homomorphism $\xi:I(\pi)^n\to L=\pi_{n-1}(B\pi^{(n-1)})$ such that $\xi^*(b^n)=\kappa_1$ and $\xi^*(f^*(b^n))=o_f$ where $\kappa_1$ is the primary obstruction to retract $B\pi$ onto $B\pi^{(n-1)}$ and $o_f$ is the primary obstruction do deform $f$ into $B\pi^{(n-1)}$. Then
$o_{\Wi f}=ec_M^*(o_f)=\xi^*ec_M^*(b^n)=0$.
\end{proof}

Every manifold $M$ with the fundamental group $\pi$ carries a local coefficients system $\mathcal I^n$ generated by the $\pi$-module $I(\pi)^n$.
\begin{prop}
Suppose that Conjecture~\ref{sheaf} holds true for the universal cover $\Wi M$ of a closed rationally inessential $n$-manifold $M$ with $\mathcal S=\bar p^*\mathcal I^n$. Then $\dim_{mc}\Wi M<n$. 
\end{prop}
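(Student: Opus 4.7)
The plan is to adapt the argument used for the preceding proposition on the Rationality Conjecture, replacing the module $L=\pi_{n-1}(B\pi^{(n-1)})$ by the universal module $I(\pi)^n$ so that Conjecture~\ref{sheaf} becomes applicable. By \theoref{cech}, proving $\dim_{mc}\Wi M<n$ is equivalent to showing that $f\circ\bar p\colon\beta(\Wi M)\to B\pi$ can be deformed into $B\pi^{(n-1)}$. Since $\dim\beta(\Wi M)=n$, the primary obstruction $\tilde o$ is the only obstruction, and by naturality $\tilde o=\bar p^*(o_f)$ where $o_f\in H^n(M;L)$ is the primary obstruction to deforming $f$ itself. (We may assume $\pi$ infinite, since otherwise $\Wi M$ is compact and $\dim_{mc}\Wi M=0<n$.)

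The second step invokes the universality of the Berstein--Schwarz class (\theoref{universal}): there is a $\pi$-homomorphism $\xi\colon I(\pi)^n\to L$ with $\xi^*(b^n)=\kappa_1$, where $\kappa_1\in H^n(\pi,L)$ is the primary obstruction to retracting $B\pi$ onto $B\pi^{(n-1)}$. Naturality of $\xi^*$ together with $o_f=f^*(\kappa_1)$ gives
\[
\tilde o=\bar p^*(o_f)=\bar p^*\xi^*\bigl(f^*(b^n)\bigr)=\xi^*\bigl((f\circ\bar p)^*(b^n)\bigr).
\]
Thus it suffices to show that the class $(f\circ\bar p)^*(b^n)\in H^n(\beta(\Wi M);\bar p^*\mathcal I^n)$ vanishes. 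The point of routing through the Berstein--Schwarz class is precisely that its coefficient module $I(\pi)^n$ is a tensor product of the free abelian group $I(\pi)$ and hence is free abelian, so the stalks of the sheaf $\bar p^*\mathcal I^n$ satisfy the hypothesis of Conjecture~\ref{sheaf}.

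The crux is to show that $(f\circ\bar p)^*(b^n)$ is torsion; Conjecture~\ref{sheaf} applied to the simply connected open $n$-manifold $N=\Wi M$ with $\mathcal S=\bar p^*\mathcal I^n$ will then force it to be zero. Rational inessentiality gives $f_*([M])=0$ in $H_n(B\pi;\mathbb Q)$, so $f_*([M])$ is torsion in $H_n(B\pi;\mathbb Z)$. Naturality of the cap product yields
\[
f_*\bigl(f^*(b^n)\cap[M]\bigr)=b^n\cap f_*([M])\in H_0(B\pi;\mathcal I^n),
\]
and the right-hand side is torsion. Since $f$ induces the identity on $\pi_1$, the map $f_*\colon H_0(M;\mathcal I^n)\to H_0(B\pi;\mathcal I^n)$ is the identity on the coinvariants $(I(\pi)^n)_{\pi}$, hence an isomorphism; therefore $f^*(b^n)\cap[M]$ is torsion in $H_0(M;\mathcal I^n)$. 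Poincar\'e duality with local coefficients on the closed oriented manifold $M$ then gives that $f^*(b^n)$ itself is torsion in $H^n(M;\mathcal I^n)$, and applying the group homomorphism $\bar p^*$ preserves torsion, finishing the argument.

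The main obstacles I anticipate are not conceptual but bookkeeping: verifying that the local coefficient systems match correctly under the three pullbacks $f^*$, $\bar p^*$, and $\xi^*$, and that the obstruction-theoretic identifications commute with coefficient change. One should also explicitly confirm that $\dim\beta(\Wi M)=n$ (using the equality $\dim\beta X=\dim X$ for normal spaces) so that $\tilde o$ is indeed the only obstruction, and note that $\bar p^*\mathcal I^n$ is genuinely a locally constant sheaf on $\beta(\Wi M)$ since it is the pullback of a locally constant sheaf on $M$ along the continuous map $\bar p$.
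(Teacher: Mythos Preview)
Your proposal is correct and follows essentially the same route as the paper: reduce via \theoref{cech} to the vanishing of the primary obstruction on $\beta(\Wi M)$, use the universality of the Berstein--Schwarz class to factor $\tilde o$ through $(f\circ\bar p)^*(b^n)$, show this class is torsion via rational inessentiality, and kill it with Conjecture~\ref{sheaf}. Your treatment is in fact somewhat more careful than the paper's---you spell out the Poincar\'e-duality argument for why $f^*(b^n)$ is torsion, and you apply the conjecture to $(f\circ\bar p)^*(b^n)$ before passing through $\xi^*$, which is the logically clean order since the conjecture is only assumed for the sheaf $\bar p^*\mathcal I^n$.
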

\begin{proof}
In view of Theorem~\ref{cech} it suffices to show that
the primary obstruction $\tilde o$ to deform $f\circ\bar p:\beta(\Wi M)\to B\pi$ to $B\pi^{(n-1)}$ is zero.   Note that $\tilde o=\bar p^*f^*(o_1)$ where $o_1\in H^n(B\pi;\mathcal S)$ is the primary obstruction to retraction of $B\pi$ to $B\pi^{(n-1)}$. By the universality of the Berstein-Schwarz class there is a
morphism of local coefficient systems $\mathcal I^n\to\mathcal S$ over $B\pi$ such that the induced homomorphism of $n$th cohomology takes $b^n$ to $o_1$. Then $\bar p^*f^*(b^n)$ is taken to $\tilde o$ by the corresponding homomorphism.
Since $f_*([M])$ is a torsion, it follows that $f^*(b^n)$ has finite order. Therefore $\tilde o$ has finite order.
Then Conjecture~\ref{sheaf} for the  sheaf $\bar p^*\mathcal I^n$  implies that $\tilde o=0$.
\end{proof}

\end{document}